\newtheorem{theorem}{Theorem}
\theoremstyle{plain}
\newtheorem{acknowledgement}{Acknowledgement}
\newtheorem{corollary}{Corollary}
\newtheorem{definition}{Definition}
\newtheorem{lemma}{Lemma}
\newtheorem{remark}{Remark}
\numberwithin{equation}{section}
\begin{document}
\title[On singular $Q$-curvature ...]{On singular $Q$-curvature type
equations}
\author{Mohammed Benalili}
\email{m\_benalili@mail.univ-tlemcen.dz}
\subjclass[2000]{Primary 58J05}
\keywords{$Q$-curvature type equation, Hardy inequality, Sobolev's exponent
growth, Singularities, regularity..}

\begin{abstract}
This paper is devoted to $Q$-curvature type equations with singularities;
mainly we give existence and regularity results of solutions. To have
positive solutions which will be meaningfully in conformal geometry we
restrict ourself to special manifolds.
\end{abstract}

\maketitle

\section{Introduction}

In 1983, Paneitz (\cite{19}) introduced a conformal fourth order operator
defined on $4$-dimensional Riemannian manifolds by%
\begin{equation*}
P_{g}^{4}\left( u\right) =\Delta _{g}^{2}u-div_{g}\left( \frac{2}{3}%
R_{g}g-2Ric_{g}\right) du
\end{equation*}%
where $\Delta _{g}u=-div_{g}\left( \nabla _{g}u\right) $ is the Laplacian of 
$u$ with respect to $g$, $R_{g}$ is the scalar curvature with respect to $g$
and, $Ric_{g}$ is the Ricci curvature of $g$ .

Branson (\cite{6}) generalized the notion to manifolds of dimension $n\geq 5$
by letting 
\begin{equation*}
P_{g}^{n}\left( u\right) =\Delta _{g}^{2}u-div_{g}\left( \frac{\left(
n-2\right) ^{2}+4}{2\left( n-1\right) \left( n-2\right) }R_{g}g-\frac{4}{n-2}%
Ric_{g}\right) du+\frac{n-4}{2}Q_{g}^{n}u
\end{equation*}%
where%
\begin{equation*}
Q_{g}^{n}=\frac{1}{2\left( n-1\right) }\Delta _{g}R_{g}+\frac{%
n^{3}-4n^{2}+16n-16}{8\left( n-1\right) ^{2}\left( n-2\right) ^{2}}R_{g}^{2}-%
\frac{2}{\left( n-2\right) ^{2}}\left\vert Ric_{g}\right\vert ^{2}\text{.}
\end{equation*}%
Both operators $P_{g}^{4}$ and $P_{g}^{n}$ are conformal operators that
means that: for every $u\in C^{\infty }\left( M\right) $, $P_{\widetilde{g}%
}^{4}(\varphi u)=$\ $e^{-4\varphi }P_{g}^{4}(u)$\ where $\widetilde{g}%
=e^{2\varphi }g$, while $P_{g}^{n}\left( \varphi u\right) =\varphi ^{\frac{%
n+4}{n-4}}P_{\widetilde{g}}^{n}\left( u\right) $\ where $\widetilde{g}%
=\varphi ^{\frac{4}{n-4}}g$ and $n\geq 5$, $\varphi \in C^{\infty }(M)$. $%
P_{g}^{4}$ is the analogous of $\Delta _{g}$ in dimension $2$ while $%
P_{g}^{n}$ is the analogous of the conformal Laplacian $L_{g}u=\Delta _{g}u+%
\frac{n-2}{4\left( n-1\right) }R_{g}u$.

Fourth order equations of Sobolev growth have been the subject of intensive
investigation the last tree decades because of theirs applications to
conformal geometry, in particular to Paneitz-Branson operators; we refer the
reader to \cite{2}, \cite{3}, \cite{4}, \cite{5}, \cite{7}, \cite{10}, \cite%
{11}, \cite{12}, \cite{13}, \cite{15}, \cite{16}, \cite{17}, \cite{19}, \cite%
{21}, and \cite{22}.These equations are also interesting because of theirs
analogues in the second order which give applications to the conformal
Laplacian. Recently Madani \cite{18}, has considered the Yamab\'{e} problem
with singularities which he solved under some geometric conditions. Let $%
H_{2}^{2}(M)$ be the completion of the space $C^{\infty }(M)$ in $%
L^{2}\left( M\right) $, for the norm 
\begin{equation*}
\left\Vert u\right\Vert _{H_{2}^{2}(M)}=\left\Vert u\right\Vert
_{2}+\left\Vert \nabla u\right\Vert _{2}+\left\Vert \nabla ^{2}u\right\Vert
_{2}
\end{equation*}%
where $\left\Vert \nabla ^{k}u\right\Vert _{2}=\left( \int_{M}\left\vert
\nabla ^{k}u\right\vert ^{2}dv_{g}\right) ^{\frac{1}{2}}$ , $k=0,1,2$. In
all this paper, we denote by $H_{2}(M)$ the Sobolev's space $H_{2}^{2}(M)$
endowed with the suitable norm 
\begin{equation*}
\left\Vert u\right\Vert _{H_{2}(M)}=\left\Vert \Delta u\right\Vert
_{2}+\left\Vert \nabla u\right\Vert _{2}+\left\Vert u\right\Vert _{2}
\end{equation*}%
equivalent to $\left\Vert u\right\Vert _{H_{2}^{2}(M)}$. In this work we
deal with the $Q$-curvature type equations with singularities which is of
the form%
\begin{equation}
\Delta ^{2}u-\nabla ^{i}\left( a(x)\nabla _{i}u\right) +b(x)u=f\left\vert
u\right\vert ^{N-2}u  \label{222}
\end{equation}%
where the functions $a(x)$ and $b(x)$ are in $L^{s}(M)$, $s>\frac{n}{2}$ and
in $L^{p}(M)$, $p>\frac{n}{4}$ respectively, $N=\frac{2n}{n-4}$ is the
Sobolev critical exponent of the embedding $H_{2}^{2}\left( M\right)
\hookrightarrow L^{N}\left( M\right) $. In the particular case where the
coefficients are of the form $a(x)=\frac{\widetilde{a}(x)}{\rho ^{\gamma }}$
and $b(x)=\frac{\widetilde{b}(x)}{\rho ^{\alpha }}$ where $\widetilde{a}$
and $\widetilde{b}$ are smooth functions on $M$, $\rho $ denotes the
function defined below by (\ref{1}) and $\gamma $, $\alpha $ are real
numbers such that $0<\gamma <\frac{n}{s}<2$ and $0<\alpha <\frac{n}{p}<4$,
equation (\ref{222}) has singularities of order $\gamma $ and $\alpha $.
Mainly we give an existence result of solutions. Because of the lack of a
maximum principle for fourth order elliptic equations, finding positive
solutions is a hard question in the general case so to have positive
solutions which will be meaningfully in conformal geometry we restrict
ourself to special manifolds. Our results state as follows

\begin{theorem}
\label{th01} Let $\left( M,g\right) $ be a compact $n$-dimensional
Riemannian manifold, $n\geq 6$, $a\in L^{s}(M)$, $b\in L^{p}(M)$, with $s>%
\frac{n}{2}$, $p>\frac{n}{4}$, $f$ $\in C^{\infty }(M)$ a positive function
and $P\in M$ such that $f(P)=\max_{x\in M}f(x)$.

For $n\geq 10$,or $n=9$ and $\frac{9}{4}<p<11$ or $n=8$ and $2<p<5$ or $n=7$
and $\frac{7}{2}<s<9$ , $\frac{7}{4}<p<3$ we suppose that 
\begin{equation*}
\frac{n^{2}+4n-20}{6\left( n-6\right) (n^{2}-4)}R_{g}\left( P\right) -\frac{%
n-4}{2n\left( n-2\right) }\frac{\Delta f(P)}{f(P)}>0\text{.}
\end{equation*}%
For $n=6$ and $\frac{3}{2}<p<2$, $3<s<4$, we suppose that 
\begin{equation*}
R_{g}(P)>0\text{.}
\end{equation*}%
Then the equation (\ref{222}) has a non trivial weak solution $u$ in $%
H_{2}^{2}\left( M\right) $. Moreover if $a\in H_{1}^{s}\left( M\right) $,
then

$u\in $ $C^{0,\beta }$, for some $\beta \in \left( 0,1-\frac{n}{4p}\right) $.
\end{theorem}

For $R\in M$, we define the function $\rho $ on $M$ by

\begin{equation}
\rho (Q)=\left\{ 
\begin{array}{c}
d(R,Q)\text{ \ if \ \ \ \ \ \ }d(R,Q)<\delta (M) \\ 
\delta (M)\text{ \ if\ \ \ \ \ }d(R,Q)\geq \delta (M)%
\end{array}%
\right.  \label{1}
\end{equation}%
where $\delta (M)$ denotes the injectivity radius of $M$.

For real numbers $\gamma $ and $\alpha $, consider the equation in the
distribution sense

\begin{equation}
\Delta ^{2}u-\nabla ^{\mu }(\frac{a}{\rho ^{\gamma }}\nabla _{\mu }u)+\frac{%
bu}{\rho ^{\alpha }}=f\left\vert u\right\vert ^{N-2}u  \label{223}
\end{equation}%
where the functions $a$ and $b$ are smooth function $M$, and let 
\begin{equation*}
A=\left\{ u\in H_{2}(M):\int_{M}f\left\vert u\right\vert ^{N}dv_{g}=\left(
1+\left\Vert \frac{a}{\rho ^{\gamma }}\right\Vert _{s}+\left\Vert \frac{b}{%
\rho ^{\alpha }}\right\Vert _{p}\right) ^{\frac{N}{2}}\right\} .
\end{equation*}

If $\ 0<\gamma <\frac{n}{s}<2$ and $0<\alpha <\frac{n}{p}<4$ obviously $%
\frac{a}{\rho ^{\gamma }}\in L^{s}(M)$ and $\frac{b}{\rho ^{\alpha }}\in
L^{p}\left( M\right) $ so as a Corollary of Theorem \ref{th01}, we have

\begin{corollary}
\label{Cor2} Let $0<\gamma <\frac{n}{s}<2$ and $0<\alpha <\frac{n}{p}<4$.
For for $n\geq 8$, or $n=7$ and $\alpha >1$, suppose that 
\begin{equation}
\frac{n^{2}+4n-20}{6\left( n-6\right) (n^{2}-4)}R_{g}-\frac{n-4}{2n\left(
n-2\right) }\frac{\Delta f(P)}{f(P)}>0  \label{224}
\end{equation}%
and for $n=6$ and $\alpha >2$ suppose that 
\begin{equation*}
R_{g}(P)>0\text{.}
\end{equation*}%
Then the equation (\ref{223}) has a non trivial weak solution $u$ in $%
H_{2}\left( M\right) $. Moreover if $\ 0<\gamma <\frac{n}{s}-1$, then $u\in $
$C^{0,\beta }$, for some $\beta \in \left( 0,1\right) $.
\end{corollary}

For any $u\in H_{2}^{2}(M)$, we let 
\begin{equation*}
J_{\gamma ,\alpha }(u)=\int_{M}\left( \Delta u\right) ^{2}dv_{g}+\int_{M}%
\frac{a}{\rho ^{\gamma }}\left\vert \nabla u\right\vert ^{2}dv_{g}+\int_{M}%
\frac{b}{\rho ^{\alpha }}u^{2}dv_{g}
\end{equation*}%
be the energy functional and consider the Sobolev quotient: for any $u\in
H_{2}^{2}(M)-\left\{ 0\right\} $%
\begin{equation*}
Q_{\gamma ,\alpha }(u)=\frac{J_{\gamma ,\alpha }(u)}{\left( \int f\left\vert
u\right\vert ^{N}dv_{g}\right) ^{\frac{2}{N}}}\text{.}
\end{equation*}%
\begin{equation*}
A=\left\{ u\in H_{2}(M):\int_{M}f\left\vert u\right\vert ^{N}dv_{g}=\left(
1+\left\Vert \frac{a}{\rho ^{\gamma }}\right\Vert _{s}+\left\Vert \frac{b}{%
\rho ^{\alpha }}\right\Vert _{p}\right) ^{\frac{N}{2}}\right\}
\end{equation*}%
obviously $A\neq \phi $.

Put 
\begin{equation*}
Q_{\gamma ,\alpha }(M)=\inf_{u\in H_{2}(M)-\left\{ 0\right\} }Q_{\gamma
,\alpha }(u)=\inf_{u\in A}J_{\gamma ,\alpha }(u)
\end{equation*}%
and let $K\left( n,1,-2\right) $ and $K(n,2,-4)$ be the best constants in
the Hardy inclusion $H_{1}^{2}(M)\hookrightarrow L^{\frac{2n}{n-2}}\left(
M,\rho ^{-2}\right) $(obtained in \cite{18} ) and $H_{2}^{2}(M)%
\hookrightarrow L^{N}\left( M,\rho ^{-4}\right) $ ( see Lemma \ref{lem2} in
section2) respectively and where $N=\frac{2n}{n-4}$. In the sharp case $%
\gamma =2$, $\alpha =4$, we get

\begin{theorem}
Let $a$, $b$ and $f$ be smooth functions on $M$ with $f$ positive. Suppose
that 
\begin{equation*}
Q_{2,4}(M)K(n,2)^{2}\left( \sup_{x\in M}f\right) ^{\frac{2}{N}}<\left(
1+\left\Vert \frac{a}{\rho ^{\gamma }}\right\Vert _{s}+\left\Vert \frac{b}{%
\rho ^{\alpha }}\right\Vert _{p}\right) \left( 1+b(P)K(n,2,-4)^{2}\right) .
\end{equation*}%
If moreover we have 
\begin{equation*}
1+b(P)K(n,2,-4)^{2}>0
\end{equation*}%
and 
\begin{equation*}
1+a\left( P\right) K\left( n,1,-2\right) ^{2}+b(P)K(n,2,-4)^{2}>0
\end{equation*}

then the equation in the distribution sense%
\begin{equation*}
\Delta ^{2}u-\nabla ^{\mu }(\frac{a}{\rho ^{2}}\nabla _{\mu }u)+\frac{bu}{%
\rho ^{4}}=f\left\vert u\right\vert ^{N-2}u
\end{equation*}%
has a non trivial weak solution $u_{2,4}\in A$, which fulfilled $%
J_{2,4}(u)=Q_{2,4}(M)$.
\end{theorem}

When $\left( M,h\right) $ is a compact flat manifold, let $g=Ah$ where $%
A=e^{-\rho ^{2-\sigma }}$ and $0<\sigma <\frac{n}{p}-2<4$ and $\rho $ is
given by (\ref{1}), we give a geometric interpretation of ours results.
Denote by $H_{4}^{p}(M,T^{\ast }(M)\otimes T^{\ast }(M))$, with $p>\frac{n}{4%
}$, the Sobolev space of $H_{4}^{p}$- metrics on the manifold $M$. Obviously 
$g=Ah\in H_{4}^{p}(M,T^{\ast }(M)\otimes T^{\ast }(M))$.

\begin{theorem}
\label{th02} Suppose that the manifold $\left( M,h\right) $ is smooth
compact and flat of dimension $n>6$ and consider the conformal metric $g=Ah$%
, where $A=e^{-\rho ^{2-\sigma }}$ , $0<\sigma <\inf \left( \frac{n}{s}-1,%
\frac{n}{p}-2\right) $ where $s>\frac{n}{2}$, $p>\frac{n}{4}$ and $\rho $
defined by (\ref{1}), is supposed sufficiently small. Let $f$ be a $%
C^{\infty }$ positive function on $M$ and $P\in M$ such that $%
f(P)=\max_{x\in M}f(x)$ and $\rho \left( P\right) \neq 0.$

Suppose that 
\begin{equation*}
\frac{n^{2}+4n-20}{6\left( n-6\right) (n^{2}-4)}R_{g}\left( P\right) -\frac{%
n-4}{2n\left( n-2\right) }\frac{\Delta f(P)}{f(P)}>0
\end{equation*}%
then there exists a metric $\widetilde{g}\in H_{4}^{p}(M,T^{\ast }(M)\otimes
T^{\ast }(M))$ conformal to $g$ such that $f$ is the $Q$-curvature of the
manifold $\left( M,\widetilde{g}\right) $.
\end{theorem}

Our paper is organized as follows: in the first section, we give an Hardy
inequality on compact manifolds, in the second one we establish the
regularity of the Paneitz-Branson operator which leads us to construct the
Green's function to the Schr\"{o}dinger biharmonic operator this latter
allows us to obtain a priori estimates to a solution of some biharmonic
equation. The third section is devoted to the study of the $Q$-curvature
equation with singularities of order $0<\gamma <2$ and $0<\alpha <4$. In the
fourth section we consider the sharp singularities i.e. $\gamma =2$ and $%
\alpha =4$. In the last section, we give an interpretation in conformal
geometry.

\section{Hardy inequality on compact manifolds}

Let $(M,g)$ be a compact $n$-dimensional Riemannian manifold. We consider
the space $L^{p}(M,\rho ^{\gamma })$ , $1\leq p\leq \infty $, of measurable
functions $u$ on $M$ such that

\begin{equation}
\left\Vert u\right\Vert _{p,\rho ^{\gamma }}^{p}=\int_{M}\rho ^{\gamma
}\left\vert u\right\vert ^{p}dv_{g}<+\infty  \label{2}
\end{equation}%
where $\rho $ is the function defined by (\ref{1}) and $\gamma \in R$.

The space $L^{p}(M,\rho ^{\gamma })$, endowed with the norm $\left\Vert
u\right\Vert _{p,\rho ^{\gamma }}^{p}$, is a Banach space and we have

\begin{lemma}
\label{lm1}( Hardy inequality )

For any function $u\in C_{o}^{\infty }(R^{n})$, there exists a constant $C>0$
such that

\begin{equation}
\left\Vert \left\vert x\right\vert ^{\frac{\gamma }{p}}u\right\Vert _{p}\leq
C\left\Vert \left\vert x\right\vert ^{\beta }\nabla ^{l}u\right\Vert _{q}
\label{3}
\end{equation}

where $p$, $q$ and $\gamma $ real numbers such that 
\begin{equation*}
1\leq q\leq p\leq \frac{nq}{n-lq}\text{, }n>lq\text{, }\frac{\gamma }{p}%
=\beta -l+n\left( \frac{1}{q}-\frac{1}{p}\right) >-\frac{n}{p}\text{.}
\end{equation*}
\end{lemma}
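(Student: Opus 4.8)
Since (\ref{3}) concerns $u\in C_{o}^{\infty }(\mathbb{R}^{n})$, it is the Euclidean Caffarelli--Kohn--Nirenberg type estimate, the manifold statements of the paper being recovered from it via a partition of unity. The plan is to prove it in two stages: first the first order case $l=1$, then general $l$ by iterating the first order inequality on successive derivatives of $u$.

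\textbf{The first order case.} Here the target is $\left\Vert \left\vert x\right\vert ^{\gamma /p}u\right\Vert _{p}\leq C\left\Vert \left\vert x\right\vert ^{\beta }\nabla u\right\Vert _{q}$ with $\gamma /p=\beta -1+n\left( 1/q-1/p\right) $, $1\leq q\leq p\leq nq/(n-q)$, $n>q$, and $\beta >1-n/q$ (which is exactly $\gamma /p>-n/p$). I would reduce to the two endpoint exponents $p=q$ and $p=nq/(n-q)$ and then interpolate. For $p=q$ the inequality is the weighted Hardy inequality
\begin{equation*}
\left\Vert \left\vert x\right\vert ^{\beta -1}u\right\Vert _{q}\leq \frac{q}{n+(\beta -1)q}\left\Vert \left\vert x\right\vert ^{\beta }\nabla u\right\Vert _{q},
\end{equation*}
which I would get from $\mathrm{div}\left( x\left\vert x\right\vert ^{(\beta -1)q}\right) =\left( n+(\beta -1)q\right) \left\vert x\right\vert ^{(\beta -1)q}$ (note $n+(\beta -1)q>0$ by hypothesis) by integrating by parts in $\int_{\mathbb{R}^{n}}\left\vert x\right\vert ^{(\beta -1)q}\left\vert u\right\vert ^{q}dx$ and applying H\"{o}lder with exponents $q/(q-1)$ and $q$ to the resulting integral of $\left\vert x\right\vert ^{(\beta -1)q+1}\left\vert u\right\vert ^{q-1}\left\vert \nabla u\right\vert =\left( \left\vert x\right\vert ^{\beta -1}\left\vert u\right\vert \right) ^{q-1}\left( \left\vert x\right\vert ^{\beta }\left\vert \nabla u\right\vert \right) $. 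For $p=nq/(n-q)$, I would set $v=\left\vert x\right\vert ^{\beta }u$; then $\left\vert x\right\vert ^{\beta }\nabla u=\nabla v-\beta x\left\vert x\right\vert ^{-2}v$, so $\left\Vert \nabla v\right\Vert _{q}\leq \left\Vert \left\vert x\right\vert ^{\beta }\nabla u\right\Vert _{q}+\left\vert \beta \right\vert \left\Vert \left\vert x\right\vert ^{\beta -1}u\right\Vert _{q}$; bound the last term by the weighted Hardy inequality just obtained and invoke the classical Sobolev inequality $\left\Vert v\right\Vert _{nq/(n-q)}\leq C\left\Vert \nabla v\right\Vert _{q}$ to get the bound for $\left\Vert \left\vert x\right\vert ^{\beta }u\right\Vert _{nq/(n-q)}$. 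For intermediate $p$, write $1/p=t/q+(1-t)(n-q)/(nq)$ with $t=1-n\left( 1/q-1/p\right) \in [0,1]$; since $\gamma /p=(\beta -1)t+\beta (1-t)$ one has $\left\vert x\right\vert ^{\gamma /p}\left\vert u\right\vert =\left( \left\vert x\right\vert ^{\beta -1}\left\vert u\right\vert \right) ^{t}\left( \left\vert x\right\vert ^{\beta }\left\vert u\right\vert \right) ^{1-t}$, and H\"{o}lder gives $\left\Vert \left\vert x\right\vert ^{\gamma /p}u\right\Vert _{p}\leq \left\Vert \left\vert x\right\vert ^{\beta -1}u\right\Vert _{q}^{t}\left\Vert \left\vert x\right\vert ^{\beta }u\right\Vert _{nq/(n-q)}^{1-t}$, both factors already being $\leq C\left\Vert \left\vert x\right\vert ^{\beta }\nabla u\right\Vert _{q}$.

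\textbf{Iteration to general $l$.} I would apply the first order inequality componentwise to $\nabla ^{l-1}u$, then to $\nabla ^{l-2}u$, and so on down to $u$; since each partial derivative of a component of $\nabla ^{k}u$ is bounded pointwise by $\left\vert \nabla ^{k+1}u\right\vert $, this costs only dimensional constants. The intermediate exponents $r_{0}=q,\dots ,r_{l}=p$ should be chosen by $1/r_{j}=1/q-\frac{j}{l}\left( 1/q-1/p\right) $. From $n>lq$ and $p\leq nq/(n-lq)$ one gets $1/q-1/p\leq l/n$, hence $1/r_{j-1}-1/r_{j}\leq 1/n$, which is exactly the admissibility $r_{j-1}\leq r_{j}\leq nr_{j-1}/(n-r_{j-1})$; the same bound gives $1/r_{j-1}\geq 1/q-(l-1)/n>1/n$, so $r_{j-1}<n$ at every step. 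The weight exponent of $\nabla ^{l-j}u$ produced at the $j$-th step is $\beta -j+n\left( 1/q-1/r_{j}\right) $, and that step can be run precisely when $\beta -j+n/q>0$, which is implied by $\gamma /p>-n/p$, i.e. $\beta >l-n/q$. After $l$ steps the weight exponent telescopes to $\beta -l+n\left( 1/q-1/p\right) =\gamma /p$, which is (\ref{3}), with a constant depending only on $n$, $l$, $p$, $q$, $\gamma $.

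\textbf{Main difficulty.} The two delicate points will be the weighted Sobolev endpoint $p=nq/(n-q)$ --- where the weight must be absorbed into $v=\left\vert x\right\vert ^{\beta }u$ and the error term handled by the Hardy step rather than by a crude triangle inequality, which would impose a spurious bound on $\left\vert \beta \right\vert $ --- and the simultaneous verification that all the interpolated exponents $r_{j}$ meet the admissibility and positivity constraints. A single higher order integration by parts would also work in principle, but tracking the weights that way is considerably messier, so the inductive route above seems the cleanest.
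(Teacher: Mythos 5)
Your plan is sound, and there is in fact nothing in the paper to compare it with: Lemma \ref{lm1} is stated there without proof, as a quoted Euclidean result (it is the Hardy--Sobolev, or Caffarelli--Kohn--Nirenberg, inequality), and the paper's own work only begins at Lemma \ref{lem2}, where the estimate is transplanted to the manifold by exactly the partition-of-unity argument you anticipate. Your derivation is the standard one and the exponent bookkeeping checks out: for $p=q$ the divergence identity together with $n+(\beta-1)q>0$ (which is precisely $\gamma/p>-n/p$ in that case) gives the weighted Hardy inequality; the substitution $v=\left\vert x\right\vert ^{\beta }u$, with the error term $\beta x\left\vert x\right\vert ^{-2}v$ absorbed by that same Hardy bound and the unweighted Sobolev inequality applied to $v$, gives the endpoint $p=nq/(n-q)$; the H\"{o}lder interpolation with $t=1-n\left( 1/q-1/p\right) \in \left[ 0,1\right] $ reproduces the correct weight $\gamma /p=\beta -t$ (and the H\"{o}lder exponents $q/(tp)$ and $\frac{nq}{(n-q)(1-t)p}$ are admissible because $q\leq n$ and $p\leq nq/(n-q)$); and the iteration works because $1/q-1/p\leq l/n$ makes each increment $1/r_{j-1}-1/r_{j}$ at most $1/n$, while $n>lq$ keeps every $r_{j-1}<n$, and the positivity constraints $\beta >j-n/q$ for $j\leq l$ all follow from the single hypothesis $\beta >l-n/q$, i.e. $\gamma /p>-n/p$. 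Two routine points should be made explicit in a written version: the integration by parts in the Hardy step requires excising a small ball around the origin, the boundary term being $O\left( \epsilon ^{n+(\beta -1)q}\right) \rightarrow 0$; and $v=\left\vert x\right\vert ^{\beta }u$ is in general only a compactly supported $W^{1,q}$ function rather than smooth, so the Sobolev inequality must be invoked for it by density. Neither affects the validity of the argument.
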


The optimal constant in (\ref{3}) will be denoted by $K(n,l,\gamma ,\beta )$
and $K(n,l,\gamma )$ when $\beta =0$.

From the above lemma we infer,

\begin{lemma}
\label{lem2} Let $\left( M,g\right) $ be a compact $n$- dimensional
Riemannian manifold, and $p$, $q$ and $\gamma $ real numbers satisfying

\begin{equation*}
1\leq q\leq p\leq \frac{nq}{n-2q}\text{, }n>2q\text{, }\frac{\gamma }{p}%
=-2+n\left( \frac{1}{q}-\frac{1}{p}\right) >-\frac{n}{p}\text{.}
\end{equation*}%
For any $\varepsilon >0$, there is a constant $A(\varepsilon ,q,\gamma )$
such that

\begin{equation*}
\forall f\in H_{2}^{q}(M)\text{, \ }\left\Vert f\right\Vert _{p,\rho
^{\gamma }}^{q}\leq \left( 1+\varepsilon \right) K^{q}(n,q,\gamma
)\left\Vert \nabla ^{2}f\right\Vert _{q}^{q}+A(\varepsilon ,q,\gamma
)\left\Vert f\right\Vert _{q}^{q}\text{.}
\end{equation*}%
In particular in case $\gamma =0$, $K(n,q,0)=K(n,q)$ is the best constant in
Sobolev's inequality.
\end{lemma}

\begin{proof}
Let $\left\{ B_{i}\right\} _{1\leq i\leq m}$ be a finite covering of $M$ by
geodesic balls of small radius $\delta >0$, $\left\{ \left( B_{i},\varphi
_{i}\right) \right\} _{i}$, where $\varphi _{i}=\exp _{p_{i}}^{-1}$, is an
associated atlas and $\left( a_{i}\right) _{1\leq i\leq m}$ is a partition
of unity subordinated to the covering $\left\{ B_{i}\right\} _{1\leq i\leq
m} $.

Let\ $f\in C^{\infty }(M)$%
\begin{equation*}
\left\Vert f\right\Vert _{p,\rho ^{\gamma }}^{q}=\left\Vert \left\vert
f\right\vert ^{q}\right\Vert _{\frac{p}{q},\rho ^{\gamma }}=\left\Vert
\sum_{i=1}^{m}a_{i}\left\vert f\right\vert ^{q}\right\Vert _{\frac{p}{q}%
,\rho ^{\gamma }}\leq \sum_{i=1}^{m}\left\Vert a_{i}^{\frac{1}{q}%
}f\right\Vert _{p,\rho ^{\gamma }}^{q}\text{.}
\end{equation*}%
The function $\widetilde{f}=\left( a_{i}f\right) o\varphi _{i}^{-1}$ may be
considered as a function defined on $R^{n}$ by extending it by $0$ outside
the support, so applying inequality (\ref{3}), with $\beta =0$ and $l=2$ to $%
\widetilde{f}$, we get%
\begin{equation}
\left( \int_{R^{n}}\left\vert x\right\vert ^{\gamma }\left\vert \widetilde{f}%
\right\vert ^{p}dx\right) ^{\frac{1}{p}}\leq K\left( n,2,p,\gamma \right)
\left( \int_{R^{n}}\left\vert \nabla ^{2}\widetilde{f}\right\vert
^{q}dx\right) ^{\frac{1}{q}}\text{.}  \label{4}
\end{equation}%
Now, we have to express the derivatives of the function $\widetilde{f}$ in
terms of the Euclidean derivatives. If the coordinate system is normal at a
point $P\in M$, the expansion of the metric tensor at $P$ writes as 
\begin{equation*}
g_{ij}\left( Q\right) =\delta _{ij}+O\left( \rho ^{2}\right)
\end{equation*}%
where $\rho =d(P,Q)<\delta (M)$, ($\delta (M)$ is the injectivity radius of $%
M$) and the expansions of the Christoffel symbols are of the form%
\begin{equation*}
\Gamma _{ij}^{k}(Q)=O(\rho )\text{.}
\end{equation*}%
Now since 
\begin{equation*}
\nabla _{ij}\widetilde{f}\left( Q\right) =\frac{\partial ^{2}\widetilde{f}}{%
\partial x_{i}\partial x_{j}}\left( Q\right) -\Gamma _{ij}^{s}\left(
Q\right) \frac{\partial \widetilde{f}}{\partial x_{s}}\left( Q\right)
\end{equation*}%
we obtain that%
\begin{equation*}
g^{ik}\left( Q\right) g^{jl}\left( Q\right) \nabla _{ij}\widetilde{f}\left(
Q\right) \nabla _{kl}\widetilde{f}\left( Q\right) \leq \left( 1+O\left(
\delta ^{2}\right) \left\vert \nabla _{E}^{2}\widetilde{f}\left( Q\right)
\right\vert ^{2}\right)
\end{equation*}%
\begin{equation*}
+\left\vert \nabla _{E}^{2}\widetilde{f}\left( Q\right) \right\vert
\left\vert \nabla _{E}\widetilde{f}\left( Q\right) \right\vert O\left(
\delta \right) +\left\vert \nabla _{E}\widetilde{f}\left( Q\right)
\right\vert ^{2}O\left( \delta ^{2}\right) \text{.}
\end{equation*}%
To estimate the rectangular term, we use the inequality%
\begin{equation*}
ab\leq \eta a^{2}+\frac{b^{2}}{4\eta }
\end{equation*}%
valid for any positive real numbers $a$, $b$, $\eta $ and get for any $%
\varepsilon >0$, there is a constant $C(\varepsilon )$ such that

\begin{equation}
\left\vert \nabla ^{2}\widetilde{f}\left( Q\right) \right\vert ^{2}\leq
\left( 1+\varepsilon \right) \left\vert \nabla _{E}^{2}\widetilde{f}\left(
Q\right) \right\vert ^{2}+C(\varepsilon )\left\vert \nabla _{E}\widetilde{f}%
\left( Q\right) \right\vert ^{2}  \label{5}
\end{equation}%
and similarly, we obtain%
\begin{equation}
\left\vert \nabla _{E}^{2}\widetilde{f}\left( Q\right) \right\vert ^{2}\leq
\left( 1+\varepsilon \right) \left\vert \nabla ^{2}\widetilde{f}\left(
Q\right) \right\vert ^{2}+C(\varepsilon )\left\vert \nabla \widetilde{f}%
\left( Q\right) \right\vert ^{2}\text{.}  \label{6}
\end{equation}%
Since $M$ is compact there exist constants $\lambda $, $\mu $ with 
\begin{equation*}
0<\lambda \leq \sqrt{\left\vert g\right\vert }\leq \mu \text{.}
\end{equation*}%
Consequently%
\begin{equation*}
I_{i}=\int_{M}\rho ^{\gamma }\left\vert a_{i}^{\frac{1}{q}}f\right\vert
^{p}dv_{g}=\int_{R^{n}}\left\vert x\right\vert ^{\gamma }\left\vert 
\widetilde{f}\right\vert ^{p}\sqrt{\left\vert g\right\vert }dx\leq \mu
\int_{R^{n}}\left\vert x\right\vert ^{\gamma }\left\vert \widetilde{f}%
\right\vert ^{p}dx
\end{equation*}%
where $\left\vert x\right\vert =d(P,Q)<\delta (M)$ denotes the injectivity
radius and $Q=\exp _{P}x$. So by the inequality (\ref{4}) we write%
\begin{equation*}
I_{i}\leq \mu K(n,q,\gamma )^{p}\left( \int_{R^{n}}\left\vert \nabla _{E}^{2}%
\widetilde{f}\right\vert ^{q}dx\right) ^{\frac{p}{q}}\text{.}
\end{equation*}%
Taking account of the inequality (\ref{6}) and of the following inequality,
let $a$, $b$, $s\geq 1$ be positive real numbers, for any $\varepsilon
_{1}>0 $, there is a constant $C\left( \varepsilon _{1},s\right) $ such that 
\begin{equation}
\left( a+b\right) ^{s}\leq \left( 1+\varepsilon _{1}\right)
a^{s}+C(\varepsilon _{1},s)b^{s}  \label{7}
\end{equation}%
we obtain%
\begin{equation*}
I_{i}^{\frac{q}{p}}\leq \mu ^{\frac{q}{p}}K\left( n,q,\gamma \right)
^{q}\left( \int_{R^{n}}\left\vert \nabla _{E}^{2}\widetilde{f}\right\vert
^{q}dx\right)
\end{equation*}%
\begin{equation*}
\leq \left( \mu ^{\frac{q}{p}}\lambda ^{-1}\right) K\left( n,q,\gamma
\right) ^{q}\left\{ \left( 1+\varepsilon \right) \left\Vert \nabla ^{2}%
\widetilde{f}\right\Vert _{q}^{q}+C(\varepsilon ,q,\gamma )\left\Vert \nabla 
\widetilde{f}\right\Vert _{q}^{q}\right\} \text{.}
\end{equation*}%
Now since $\widetilde{f}=a_{i}^{\frac{1}{q}}f$, there exits a constant $c>0$
such that%
\begin{equation*}
\left\vert \nabla \widetilde{f}\right\vert \leq c\left\vert f\right\vert
+a_{i}^{\frac{1}{q}}\left\vert \nabla f\right\vert
\end{equation*}%
and%
\begin{equation*}
\left\vert \nabla ^{2}\widetilde{f}\right\vert \leq c\left( \left\vert
f\right\vert +\left\vert \nabla f\right\vert \right) +a_{i}^{\frac{1}{q}%
}\left\vert \nabla ^{2}f\right\vert \text{.}
\end{equation*}%
From the interpolation formula ( see \cite{1} page 93 ), for any $\eta >0$,
there is a constant $C(\eta )$ such that%
\begin{equation}
\left\Vert \nabla f\right\Vert _{q}^{q}\leq \eta \left\Vert \nabla
^{2}f\right\Vert _{q}^{q}+C(\eta )\left\Vert f\right\Vert _{q}^{q}  \label{8}
\end{equation}%
applying the inequality (\ref{7}) and letting the injectivity radius small
enough so that $\lambda $ and $\mu $ are close to $1$, we get, for any $%
\varepsilon >0$ there is a constant $A(\varepsilon ,q,\gamma )$%
\begin{equation*}
\left\Vert f\right\Vert _{p,\rho ^{\gamma }}^{q}\leq \left( 1+\varepsilon
\right) K\left( n,q,\gamma \right) ^{q}\left\Vert \nabla ^{2}f\right\Vert
_{q}^{q}+A(\varepsilon ,p,q,\gamma )\left\Vert f\right\Vert _{q}^{q}\text{.}
\end{equation*}
\end{proof}

As a corollary of Lemma \ref{lem2}, we have the following Sobolev inequality

\begin{lemma}
\label{lem3} Let $\left( M,g\right) $ be a compact $n$- dimensional
Riemannian manifold, $n\geq 5$, and $p$, $\gamma $ real numbers satisfying

\begin{equation*}
2\leq p\leq \frac{2n}{n-4}\text{, }\frac{\gamma }{p}=-2+n\left( \frac{1}{2}-%
\frac{1}{p}\right) >-\frac{n}{p}\text{.}
\end{equation*}%
For any $\varepsilon >0$, there is a constant $A(\varepsilon ,q,\gamma )$
such that

\begin{equation*}
\forall f\in H_{2}(M)\text{, \ }\left\Vert f\right\Vert _{p,\rho ^{\gamma
}}^{2}\leq \left( 1+\varepsilon \right) K(n,\gamma )^{2}\left\Vert \Delta
f\right\Vert _{2}^{2}+A(\varepsilon ,\gamma )\left\Vert f\right\Vert _{2}^{2}%
\text{.}
\end{equation*}
\end{lemma}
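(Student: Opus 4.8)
The plan is to specialize Lemma~\ref{lem2} to $q=2$ and then replace the full Hessian norm $\|\nabla^{2}f\|_{2}$ by the Laplacian norm $\|\Delta f\|_{2}$, at the cost only of lower order terms that can be absorbed. Since $C^{\infty}(M)$ is dense in $H_{2}(M)$ and, along a subsequence, any $H_{2}(M)$-approximating sequence converges almost everywhere, Fatou's lemma applied to the (non-negative) weight $\rho^{\gamma}$ on the left-hand side shows it is enough to establish the inequality for $f\in C^{\infty}(M)$; the two $L^{2}$ norms on the right are continuous for the $H_{2}(M)$-topology.

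So let $f\in C^{\infty}(M)$. Applying Lemma~\ref{lem2} with $q=2$ (its hypotheses reduce exactly to $2\le p\le \frac{2n}{n-4}$ and $\frac{\gamma}{p}=-2+n(\frac12-\frac1p)$), for every $\varepsilon'>0$ there is $A_{1}(\varepsilon',\gamma)$ with
\begin{equation*}
\|f\|_{p,\rho^{\gamma}}^{2}\le (1+\varepsilon')K(n,2,\gamma)^{2}\|\nabla^{2}f\|_{2}^{2}+A_{1}(\varepsilon',\gamma)\|f\|_{2}^{2}.
\end{equation*}
Next, the Bochner formula $\int_{M}|\nabla^{2}f|^{2}\,dv_{g}=\int_{M}(\Delta f)^{2}\,dv_{g}-\int_{M}Ric_{g}(\nabla f,\nabla f)\,dv_{g}$ together with the compactness of $M$ (so that $|Ric_{g}(\nabla f,\nabla f)|\le C_{0}|\nabla f|^{2}$ pointwise for some $C_{0}=C_{0}(M,g)$) gives
\begin{equation*}
\|\nabla^{2}f\|_{2}^{2}\le \|\Delta f\|_{2}^{2}+C_{0}\|\nabla f\|_{2}^{2}.
\end{equation*}
Finally, the interpolation inequality (\ref{8}) with $q=2$ gives, for every $\eta>0$, a constant $C(\eta)$ with $\|\nabla f\|_{2}^{2}\le \eta\|\nabla^{2}f\|_{2}^{2}+C(\eta)\|f\|_{2}^{2}$; inserting this and choosing $\eta$ so small that $C_{0}\eta<1$, the term $C_{0}\eta\|\nabla^{2}f\|_{2}^{2}$ is absorbed on the left, yielding
\begin{equation*}
\|\nabla^{2}f\|_{2}^{2}\le \frac{1}{1-C_{0}\eta}\|\Delta f\|_{2}^{2}+\frac{C_{0}C(\eta)}{1-C_{0}\eta}\|f\|_{2}^{2}.
\end{equation*}

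Combining the three displays,
\begin{equation*}
\|f\|_{p,\rho^{\gamma}}^{2}\le \frac{1+\varepsilon'}{1-C_{0}\eta}K(n,2,\gamma)^{2}\|\Delta f\|_{2}^{2}+\Big(\frac{(1+\varepsilon')C_{0}C(\eta)}{1-C_{0}\eta}K(n,2,\gamma)^{2}+A_{1}(\varepsilon',\gamma)\Big)\|f\|_{2}^{2}.
\end{equation*}
Given $\varepsilon>0$, first fix $\eta>0$ small, then fix $\varepsilon'>0$ small enough that $\frac{1+\varepsilon'}{1-C_{0}\eta}\le 1+\varepsilon$, and let $A(\varepsilon,\gamma)$ be the resulting coefficient of $\|f\|_{2}^{2}$; this is the claimed inequality for $f\in C^{\infty}(M)$, which then passes to all of $H_{2}(M)$ by the density argument above. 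The one point to watch is that replacing $\|\nabla^{2}f\|_{2}$ by $\|\Delta f\|_{2}$ must not spoil the sharp constant $K(n,2,\gamma)^{2}$: this is guaranteed precisely because the Ricci remainder and the gradient term are of strictly lower order, so the extra factor $\frac{1+\varepsilon'}{1-C_{0}\eta}$ they produce can be made as close to $1$ as we wish by shrinking $\eta$ and $\varepsilon'$.
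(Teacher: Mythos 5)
Your argument is correct and is essentially the paper's own proof, which likewise derives the lemma from Lemma~\ref{lem2} with $q=2$ via the identity $\left\Vert \nabla ^{2}f\right\Vert _{2}^{2}\leq \left\Vert \Delta f\right\Vert _{2}^{2}+c\left\Vert \nabla f\right\Vert _{2}^{2}$ and the interpolation inequality (\ref{8}); you have merely written out the absorption of the $\eta\left\Vert \nabla ^{2}f\right\Vert _{2}^{2}$ term and the choice of $\eta$, $\varepsilon'$ that the paper leaves implicit.
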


In fact, it is known on compact manifold (see \cite{1} page 115) that there
is a constant $c>0$ such that 
\begin{equation}
\left\Vert \nabla ^{2}f\right\Vert _{2}^{2}\leq \left\Vert \Delta
f\right\Vert _{2}^{2}+c\left\Vert \nabla f\right\Vert _{2}^{2}  \label{9}
\end{equation}%
and our formula follows from the interpolation formula (\ref{8}).

Now we derive a Kondrakov type result

\begin{lemma}
\label{lem4} Let $(M,g)$ be a compact $n$- dimensional Riemannian manifold
and $p$, $q$, $\gamma <0$ real numbers such that 
\begin{equation*}
1\leq q\leq p\leq \frac{nq}{n-2q}
\end{equation*}%
If $\frac{\gamma }{p}$ $=-2+n\left( \frac{1}{q}-\frac{1}{p}\right) >-\frac{n%
}{p}$, the inclusion $H_{2}^{q}\left( M\right) \subset L^{p}\left( M,\rho
^{\gamma }\right) $ is continuous.

If $\frac{\gamma }{p}$ $>-2+n\left( \frac{1}{q}-\frac{1}{p}\right) $, the
inclusion $H_{2}^{q}\left( M\right) \subset L^{p}\left( M,\rho ^{\gamma
}\right) $ is compact.
\end{lemma}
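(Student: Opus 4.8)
The plan is to reduce both statements to the Euclidean Hardy inequality of Lemma~\ref{lm1} by means of a partition of unity, and then to upgrade the continuous embedding to the compact one by a H\"older interpolation between a weighted $L^{\tilde p}$ space and $L^q(M)$. Note first that since $\gamma<0$ we have $\gamma/p<0$, so in the equality case $\gamma/p=-2+n(\frac1q-\frac1p)<0$ forces $p<\frac{nq}{n-2q}$, and in the strict case $\gamma/p>-2+n(\frac1q-\frac1p)$ together with $\gamma<0$ again forces $-2+n(\frac1q-\frac1p)<0$, i.e. $p<\frac{nq}{n-2q}$. Hence in both regimes $p$ is strictly subcritical, so in particular $H_2^q(M)\hookrightarrow L^q(M)$ compactly by the Kondrakov theorem.

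\emph{The continuous case.} I would take a finite cover $\{B_i\}_{1\le i\le m}$ of $M$ by small geodesic balls with a subordinate partition of unity $(\eta_i)$, arranged so that $P$ belongs to exactly one ball, say $B_1$, whose chart is given by normal coordinates centered at $P$, so that $\rho=|x|$ there by~(\ref{1}). For $f\in C^{\infty}(M)$ one has
\begin{equation*}
\|f\|_{p,\rho^{\gamma}}^{q}\le\sum_{i=1}^{m}\|\eta_i^{1/q}f\|_{p,\rho^{\gamma}}^{q}.
\end{equation*}
On each $B_i$ with $i\ne1$ the weight $\rho^{\gamma}$ is bounded above and below on the support of $\eta_i$, so $\|\eta_i^{1/q}f\|_{p,\rho^{\gamma}}\le C\|\eta_i^{1/q}f\|_{p}\le C'\|f\|_{H_2^q(M)}$ by the classical Sobolev embedding $H_2^q(M)\hookrightarrow L^p(M)$, legitimate since $p\le\frac{nq}{n-2q}$. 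On $B_1$, after pulling back by $\exp_P^{-1}$ and extending by $0$, I would apply inequality~(\ref{3}) with $l=2$ and $\beta=0$; the admissibility condition $\gamma/p=-2+n(\frac1q-\frac1p)>-\frac np$ is equivalent to $n>2q$, which holds. Passing from Euclidean to Riemannian second derivatives via the expansions $g_{ij}=\delta_{ij}+O(\rho^{2})$, $\Gamma_{ij}^{k}=O(\rho)$ and absorbing the first--order term by the interpolation inequality~(\ref{8}) --- exactly as in the proof of Lemma~\ref{lem2} --- gives $\|\eta_1^{1/q}f\|_{p,\rho^{\gamma}}\le C\|f\|_{H_2^q(M)}$. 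Summing over $i$ and using the density of $C^{\infty}(M)$ in $H_2^q(M)$ yields the continuous inclusion.

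\emph{The compact case.} Set $\kappa=-2+\frac nq>0$ and $\gamma_0(t)=\kappa t-n$, so that by the previous case $H_2^q(M)\hookrightarrow L^t(M,\rho^{\gamma_0(t)})$ continuously for every $t\in[q,\frac{nq}{n-2q})$, and the hypothesis reads $\gamma>\gamma_0(p)$. I would choose the intermediate exponent $\tilde p$ and weight $\tilde\gamma=\gamma_0(\tilde p)$ so that $\lambda\tilde\gamma=\gamma$ and $\lambda\tilde p+(1-\lambda)q=p$ for some $\lambda\in(0,1)$; solving gives $\tilde p=\frac{\gamma q-(p-q)n}{\gamma-(p-q)\kappa}$ and $\lambda=\frac{p-q}{\tilde p-q}$ (and $\tilde p=q$, $\lambda=-\gamma/(2q)$ when $p=q$), and one checks directly, using $\gamma_0(p)<\gamma<0$, that $q\le\tilde p<\frac{nq}{n-2q}$, $\tilde\gamma<\gamma<0$ and $0<\lambda<1$. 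H\"older's inequality with exponents $1/\lambda$ and $1/(1-\lambda)$ then gives, for every measurable $g$,
\begin{equation*}
\|g\|_{p,\rho^{\gamma}}^{p}=\int_{M}\rho^{\gamma}|g|^{p}\,dv_{g}\le\|g\|_{\tilde p,\rho^{\tilde\gamma}}^{\lambda\tilde p}\,\|g\|_{q}^{(1-\lambda)q}.
\end{equation*}
Given a sequence $(f_k)$ bounded in $H_2^q(M)$, after extracting a subsequence we get $f_k\to f$ strongly in $L^q(M)$, while $\|f_k-f\|_{\tilde p,\rho^{\tilde\gamma}}$ stays bounded by the continuous embedding just established. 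Applying the displayed estimate to $g=f_k-f$ forces $f_k\to f$ in $L^p(M,\rho^{\gamma})$, which is the asserted compactness.

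I expect the only delicate point to be the localization in the continuous case --- matching the singular flat weight $|x|^{\gamma}$ near $P$ with $\rho^{\gamma}$ on $M$ and controlling the discrepancy between metric and Euclidean second derivatives --- but this is done verbatim as in the proof of Lemma~\ref{lem2}. The exponent bookkeeping in the compact case is elementary, the essential point being that the strict inequality $\gamma>\gamma_0(p)$ is precisely what leaves room to raise the exponent from $p$ to some $\tilde p>p$ while the weighted $L^{\tilde p}$ norm stays controlled by the $H_2^q$ norm.
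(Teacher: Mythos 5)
Your proof is correct, but your treatment of the compact case takes a genuinely different route from the paper's. For the continuous inclusion the paper simply invokes Lemma~\ref{lem2}, which is what your partition-of-unity argument re-derives (your version is in fact slightly more careful, since you isolate the single chart containing the singular point $P$, where $\rho=|x|$, and treat the weight as bounded elsewhere). For the compact inclusion the paper factors the embedding as $H_{2}^{q}(M)\subset L^{r}(M)\subset L^{p}(M,\rho ^{\gamma })$ with an \emph{unweighted} intermediate space: it uses Kondrakov's compactness of $H_{2}^{q}\hookrightarrow L^{r}$ for some $r>p$ with $\frac{1}{r}>\frac{1}{q}-\frac{2}{n}$, and H\"{o}lder's inequality $\int_{M}\rho ^{\gamma }|u|^{p}\leq \Vert u\Vert _{r}^{p}\bigl(\int_{M}\rho ^{\gamma r^{\prime }}\bigr)^{1/r^{\prime }}$, the strict hypothesis $\frac{\gamma}{p}>-2+n(\frac1q-\frac1p)$ being exactly what makes the window for $r$ nonempty and $\rho^{\gamma r'}$ integrable. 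You instead keep the function in both H\"{o}lder factors, interpolating between the \emph{critical} weighted space $L^{\tilde p}(M,\rho^{\tilde\gamma})$ (controlled by the first part of the lemma) and $L^{q}(M)$, and extract compactness from the elementary embedding $H_{2}^{q}\hookrightarrow L^{q}$; your exponent bookkeeping ($\tilde p>p$ precisely when $\gamma>\gamma_0(p)$, $\tilde\gamma=\gamma/\lambda<\gamma<0$, hence $\tilde p<\frac{nq}{n-2q}$) checks out. The paper's splitting is marginally more economical in that the compact part needs nothing from the first part of the lemma, only the integrability of $\rho^{\gamma r'}$; yours has the advantage of using only the weakest compact embedding and of making transparent that the strict inequality is exactly the room needed to push the exponent above $p$ on the critical line. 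Both are valid.
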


\begin{proof}
The first part of Lemma \ref{lem4} is a consequence of Lemma \ref{lem2}. To
prove the second part of Lemma \ref{lem4}, we consider the following
inclusions $H_{2}^{q}(M)\subset L^{r}(M)\subset L^{p}(M,\rho ^{\gamma })$
and we have to show that the first inclusion is compact and the second one
is continuous. By the Kondrakov's theorem we must have 
\begin{equation*}
\frac{1}{r}>\frac{1}{q}-\frac{2}{n}\text{.}
\end{equation*}%
The H\"{o}lder's inequality allows us to write%
\begin{equation}
\int_{M}\rho ^{\gamma }\left\vert u\right\vert ^{p}dv_{g}\leq \left(
\int_{M}\left\vert u\right\vert ^{r}dv_{g}\right) ^{\frac{p}{r}}\left(
\int_{M}\rho ^{\gamma r^{\prime }}dv_{g}\right) ^{\frac{1}{r^{\prime }}}
\label{10}
\end{equation}%
with $r^{\prime }=\frac{r}{r-p}$ and $r>p$.

The second integral in the right-hand side of (\ref{10}) will converge if 
\begin{equation*}
\gamma r^{\prime }=\frac{\gamma r}{n-p}>-n\text{ \ \ i.e. \ \ }\frac{1}{r}<%
\frac{\gamma +n}{np}
\end{equation*}%
so by (\ref{10}) we must have%
\begin{equation*}
\frac{1}{q}-\frac{2}{n}<\frac{\gamma +n}{np}
\end{equation*}%
i.e. 
\begin{equation*}
\frac{\gamma }{p}>-2+n\left( \frac{1}{q}-\frac{1}{p}\right) \text{.}
\end{equation*}
\end{proof}

Now, we quote the following Lemma due to Djadli-Hebey-Ledoux (\cite{10}) and
improved by Hebey (\cite{12}) which will be used in the sequel of this paper.

\begin{lemma}
\label{lem5} Let $M$ be a Riemannian compact manifold with dimension $n\geq
5 $. For any $\epsilon >0$ there is a constant $A_{2}(\epsilon )$ such that
for any $u\in H_{2}^{2}\left( M\right) $, \ $\left\Vert u\right\Vert
_{N}^{2}\leq (1+\epsilon )K\left( n,2\right) ^{2}\left\Vert \Delta
u\right\Vert _{2}^{2}+A_{2}(\epsilon )\left\Vert u\right\Vert _{2}^{2}$ ,
where $K\left( n,2\right) $ is the best Sobolev's constant in the Sobolev's
embedding $H_{2}^{2}\left( M\right) \hookrightarrow L^{N}(M)$ and where $N=%
\frac{2n}{n-4}$.
\end{lemma}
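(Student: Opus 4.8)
We only sketch the proof, since this is the statement proved in \cite{10} and refined in \cite{12}; the argument I would use is a concentration--compactness scheme in the spirit of Lions, combined with a \emph{local} sharp Sobolev inequality obtained, exactly as in the proof of Lemma \ref{lem2}, from normal coordinates. Suppose the conclusion fails. Then there exist $\epsilon_{0}>0$ and a sequence $(u_{m})\subset H_{2}$ with $\left\Vert u_{m}\right\Vert _{N}=1$ and $(1+\epsilon_{0})K_{2}^{2}\left\Vert \Delta u_{m}\right\Vert _{2}^{2}+m\left\Vert u_{m}\right\Vert _{2}^{2}<1$ for every $m$. Hence $\left\Vert \Delta u_{m}\right\Vert _{2}^{2}\leq \big((1+\epsilon_{0})K_{2}^{2}\big)^{-1}$ and $\left\Vert u_{m}\right\Vert _{2}^{2}\leq 1/m\to 0$, so $(u_{m})$ is bounded in $H_{2}$. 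Up to a subsequence, $u_{m}\rightharpoonup u$ weakly in $H_{2}$, and by the compactness of $H_{2}\hookrightarrow L^{2}$ together with $\left\Vert u_{m}\right\Vert _{2}\to 0$ we get $u=0$; moreover, by Rellich, $u_{m}\to 0$ strongly in $H_{1}$.

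Next I would pass, up to a further subsequence, to the weak-$\ast$ limits of Radon measures $\left\vert \Delta u_{m}\right\vert ^{2}dv_{g}\rightharpoonup \mu$ and $\left\vert u_{m}\right\vert ^{N}dv_{g}\rightharpoonup \nu$ on $M$. The only manifold-specific ingredient needed in Lions' lemma is that for every $x\in M$ and every $\eta >0$ there is $r>0$ such that $\left\Vert \phi \right\Vert _{N}^{2}\leq (K_{2}^{2}+\eta)\left\Vert \Delta \phi \right\Vert _{2}^{2}$ for all $\phi \in C_{o}^{\infty }(B_{r}(x))$; this is proved just as in Lemma \ref{lem2}, reducing $\nabla ^{2}$ to $\Delta $ via $\left\Vert \nabla ^{2}\phi \right\Vert _{2}^{2}\leq \left\Vert \Delta \phi \right\Vert _{2}^{2}+c\left\Vert \nabla \phi \right\Vert _{2}^{2}$ and the interpolation inequality (\ref{8}). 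Since $u=0$, the limit measure $\nu $ is purely atomic: there is an at most countable set $(x_{j})_{j\in J}$ and numbers $\nu _{j},\mu _{j}>0$ with $\nu =\sum_{j\in J}\nu _{j}\,\delta _{x_{j}}$, $\mu \geq \sum_{j\in J}\mu _{j}\,\delta _{x_{j}}$, and $\nu _{j}^{2/N}\leq K_{2}^{2}\mu _{j}$; the cross terms arising when one localizes $u_{m}$ with a cut-off function vanish in the limit because $u_{m}\to 0$ in $H_{1}$.

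Testing the weak-$\ast$ convergences against the constant function $1$ gives $\nu (M)=\lim_{m}\left\Vert u_{m}\right\Vert _{N}^{N}=1$ and $\mu (M)=\lim_{m}\left\Vert \Delta u_{m}\right\Vert _{2}^{2}\leq \big((1+\epsilon _{0})K_{2}^{2}\big)^{-1}$. Since $N/2>1$, superadditivity of $t\mapsto t^{N/2}$ and $\sum_{j}\mu _{j}\leq \mu (M)$ yield
\[
1=\sum_{j\in J}\nu _{j}=\sum_{j\in J}\big(\nu _{j}^{2/N}\big)^{N/2}\leq K_{2}^{N}\sum_{j\in J}\mu _{j}^{N/2}\leq K_{2}^{N}\,\mu (M)^{N/2}\leq (1+\epsilon _{0})^{-N/2}<1,
\]
a contradiction. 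Hence for every $\epsilon >0$ there is a constant $A_{2}(\epsilon )$ with the stated property.

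The main obstacle is the concentration--compactness step, and within it the requirement that the atoms of $\nu $ and $\mu $ be linked by the \emph{optimal} Euclidean constant $K_{2}$: one must prove and use the local Sobolev inequality with constant arbitrarily close to $K_{2}$, and be careful that only $\Delta $ (not the full Hessian) is available, which forces the interpolation reduction above. The remaining steps --- the contradiction set-up, extraction of weak limits, and the final measure computation --- are routine.
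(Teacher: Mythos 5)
Your sketch is correct: the contradiction set-up, the extraction of a weakly null limit (using $\left\Vert u_{m}\right\Vert _{2}\to 0$ and Rellich to get $u_{m}\to 0$ in $H_{1}$), the second-order Lions concentration--compactness dichotomy with atoms linked by the optimal constant $K_{2}$, and the final superadditivity computation all fit together, and you correctly flag the two genuinely delicate points, namely the local inequality with constant $K_{2}^{2}+\eta$ on small balls and the fact that only $\Delta$ (not the full Hessian) appears, which forces the reduction $\left\Vert \nabla ^{2}\phi \right\Vert _{2}^{2}\leq \left\Vert \Delta \phi \right\Vert _{2}^{2}+c\left\Vert \nabla \phi \right\Vert _{2}^{2}$ together with (\ref{8}) and the vanishing of the cut-off cross terms. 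The paper, however, gives no proof at all: Lemma \ref{lem5} is quoted from Djadli--Hebey--Ledoux \cite{10} and its refinement, so there is nothing in the text to compare with directly. What is worth observing is that the paper has in effect already established the statement by a more elementary route: taking $p=N=\frac{2n}{n-4}$ in Lemma \ref{lem3} forces $\gamma =0$ and $K(n,2,0)=K_{2}$, so Lemma \ref{lem5} is exactly the unweighted case of the weighted inequality proved there via the covering/partition-of-unity argument of Lemma \ref{lem2} (normal coordinates, comparison of the Riemannian and Euclidean Hessians, the Euclidean sharp inequality, and summation over the charts). That route needs no concentration--compactness and no measure-theoretic limit; your route is heavier machinery but is self-contained and clarifies \emph{why} the constant $K_{2}^{2}$ is the right threshold, though as written it only yields the $(1+\epsilon )$-version --- removing the $\epsilon$ (Hebey's actual improvement) requires a further blow-up analysis that neither argument provides.
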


\section{Regularity Theorems for Paneitz-Branson type equation}

In this section we give regularity theorems for solutions to Paneiz-Branson
type equation to do so, first we construct Green's function, we follow
Aubin's construction for the Laplacian (\cite{1}) and Madani's one(\cite{18}%
) for Yamabe's operator. We will need the following Giraud's lemma (see \cite%
{14}).

\begin{lemma}
\label{lem6} \bigskip Let $\Phi (x,y)=\int_{M}\psi (x,z)\phi (z,y)dv_{g}(z)$
and suppose that $\left\vert \psi (x,z)\right\vert \leq Cd(x,z)^{i-n}$ and $%
\left\vert \phi (z,y)\right\vert \leq Cd(z,y)^{j-n}$, where $0<i,j<n$. \
Then there exists a constant $C^{\prime }<+\infty $ such that 
\begin{equation*}
\left\vert \Phi (x,y)\right\vert \leq C^{\prime }\left\{ 
\begin{array}{c}
d\left( x,y\right) ^{i+j-n\ \ \ }\text{, \ if\ \ \ \ \ \ }i+j<n \\ 
\left( 1+\left\vert \log d(x,y\right\vert \right) \ \ \text{if \ }i+j=n \\ 
1\ \ \ \ \ \ \ \ \ \ \text{if \ \ \ \ \ \ \ \ \ \ \ \ \ \ \ \ \ \ \ }i+j>n%
\end{array}%
\right. \text{.}
\end{equation*}
\end{lemma}

\begin{corollary}
\label{Cor1} For $p>\frac{n}{4}$ and $4\leq i,j<n$, let $b\in L^{p}(M)$ and $%
\Theta (x,y)=\int_{M}\psi (x,z)b(z)\phi (z,y)dv_{g}(z)$. Then there is a
constant $C^{\prime }<+\infty $ such that 
\begin{equation*}
\left\vert \Theta (x,y)\right\vert \leq C^{\prime }\left\Vert b\right\Vert
_{p}\left\{ 
\begin{array}{c}
d\left( x,y\right) ^{\left( i+j\right) -n\left( 1+\frac{1}{p}\right) \ \ \ }%
\text{, \ if\ \ \ \ \ \ }i+j<\left( 1+\frac{1}{p}\right) n \\ 
\left( 1+\left\vert \log d(x,y\right\vert \right) \ \ \text{if \ }i+j=\left(
1+\frac{1}{p}\right) n \\ 
1\ \ \ \ \ \ \ \ \ \text{if \ \ \ \ \ \ \ \ \ \ \ \ \ \ \ \ \ \ \ }%
i+j>\left( 1+\frac{1}{p}\right) n%
\end{array}%
\right. \text{.}
\end{equation*}
\end{corollary}

Indeed, 
\begin{equation*}
\left\vert \Theta (x,y)\right\vert \leq \int_{M}\left\vert \psi
(x,z)b(z)\phi (z,y)\right\vert dv_{g}(z)
\end{equation*}%
\begin{equation*}
\leq \left\Vert b\right\Vert _{p}\left( \int_{M}\left\vert \psi (x,z)\phi
(z,y)\right\vert ^{\frac{p}{p-1}}dv_{g}\left( z\right) \right) ^{1-\frac{1}{p%
}}
\end{equation*}%
so $\left\vert \psi (x,z)\right\vert ^{\frac{p}{p-1}}\leq Cd(x,z)^{\left(
i-n\right) \frac{p}{p-1}}=Cd(x,z)^{-n+\frac{pi-n}{p-1}}$. Since \ $p>\frac{n%
}{4}$, then $pi-n>n\left( \frac{i}{4}-1\right) \geq 0$, and Corollary \ref%
{Cor1} follows from Lemma \ref{lem6}.

The Green function of the Laplacian operator $\ G$ : $M\times M\rightarrow R$%
, $\left( x,y\right) \rightarrow G(x,y)$ is defined as the solution in the
distribution sense to the equation

\begin{equation}
\Delta G\left( x,.\right) =\delta _{x}-\frac{1}{V(M)}  \label{11}
\end{equation}%
where $V(M)$ is the Riemannian volume of $M$. So for any $\varphi \in
C^{\infty }(M)$, we have

\begin{equation}
\Delta \varphi (x)=\int_{M}G(x,y)\Delta ^{2}\varphi \left( y\right) dv_{g}(y)%
\text{.}  \label{12}
\end{equation}%
Multiplying (\ref{12}) by $G(x,.)$ and integrating over $M$, we get 
\begin{equation}
\varphi (x)=\int_{M\times M}G(x,y)G(y,z)\Delta ^{2}\varphi (z)dv_{g}\left(
y\right) dv_{g}(z)+\frac{1}{V(M)}\int_{M}\varphi (y)dv_{g}(y)  \label{13}
\end{equation}%
and letting 
\begin{equation*}
G_{2}(x,y)=\int_{M}G(x,z)G(z,y)dv_{g}(z)
\end{equation*}%
(\ref{13}) becomes 
\begin{equation*}
\varphi (x)=\int_{M}G_{2}(x,y)\Delta ^{2}\varphi (y)dv_{g}(y)+\frac{1}{V(M)}%
\int_{M}\varphi (y)dv_{g}(y)\text{.}
\end{equation*}%
According to Lemma \ref{lem6}, 
\begin{equation*}
\left\vert G_{2}(x,y)\right\vert \leq Cd(x,y)^{4-n}\text{ \ \ \ if }n>4\text{
and }x\neq y\text{.}
\end{equation*}%
Hence $G_{2}$ is the Green function of the biharmonic operator.

Let $b\in L^{p}\left( M\right) $ with $p>1$ and put $\widetilde{\Gamma }%
_{o}(x,y)=G_{2}(x,y)$ and define recursively for $j\geq 1$,

\begin{equation*}
\widetilde{\Gamma }_{j}(x,y)=-\int_{M}\widetilde{\Gamma }_{j-1}(x,z)b(z)%
\widetilde{\Gamma }_{o}(z,y)dv_{g}(z)
\end{equation*}%
$\widetilde{\Gamma }_{j}$ is well defined and by Corollary \ref{Cor1}, the
following estimates hold for any $y\neq x$ \ 
\begin{equation}
\left\vert \widetilde{\Gamma }_{j}(x,y)\right\vert \leq \left\{ 
\begin{array}{c}
C_{j}\left\Vert b\right\Vert _{p}d(x,y)^{4(j+1)-n\left( 1+\frac{1}{p}\right)
}\text{ \ if }(j+1)<\frac{n}{4}\left( 1+\frac{1}{p}\right) \\ 
C_{j}\left\Vert b\right\Vert _{p}\left( 1+\left\vert \log d(x,y)\right\vert
\right) \text{ if }(j+1)=\frac{n}{4}\left( 1+\frac{1}{p}\right) \\ 
C_{j}\left\Vert b\right\Vert _{p}\text{ \ \ \ \ \ \ \ \ \ \ \ \ \ \ \ \ \ \
\ \ \ \ \ \ \ \ \ \ if\ \ }(j+1)>\frac{n}{4}\left( 1+\frac{1}{p}\right)%
\end{array}%
\right. \text{.}  \label{14'}
\end{equation}%
Let $j_{o}$ such that $\ \ \ $%
\begin{equation*}
\ j_{o}+1=E\left( \frac{n}{4}\left( \frac{1}{p}+\frac{1}{s}\right) \right)
\end{equation*}%
where $E\left( x\right) $ denotes the integer part of $x$ and let 
\begin{equation*}
\Gamma _{j}=\widetilde{\Gamma }_{o}\text{,\ for }j=0\text{,..., }j_{o}-1%
\text{,\ \ \ \ \ }\Gamma _{j}=\widetilde{\Gamma }_{j}\text{, \ }j=j_{o}\text{%
, }j_{o}+1\text{, ... }
\end{equation*}%
For $y\in M$, we consider a function $u_{y}\in H_{2}(M)$ which will be
determined later and define, for $m>\frac{n}{4}\left( 1+\frac{1}{p}\right) $%
\begin{equation*}
H(y,.)=\Gamma _{o}(y,.)+\sum_{j=j_{o}}^{m}\Gamma _{j}(y,.)+u_{y}\text{.}
\end{equation*}%
Since $p>\frac{n}{4}$, thank to Corollary \ref{Cor1} we get%
\begin{equation*}
\int_{M-B_{\epsilon }\left( y\right) }\left\vert \Gamma _{j}(y,z)\right\vert
dv_{g}\left( z\right) \leq C_{j}\left\Vert b\right\Vert
_{p}^{j}\int_{M-B_{\epsilon }\left( y\right) }d(y,z)^{4(j+1)-n\left( 1+\frac{%
1}{p}\right) }dv_{g}\left( z\right)
\end{equation*}%
\begin{equation*}
\leq C_{j}\left\Vert b\right\Vert _{p}^{j}\omega _{n-1}\int_{\epsilon
}^{\delta (M)}r^{4(j+1)-\left( 1+\frac{n}{p}\right) }dr=C_{j}\left\Vert
b\right\Vert _{p}^{j}\omega _{n-1}\left( \delta (M)^{4(j+1)-\frac{n}{p}%
}-\epsilon ^{^{4(j+1)-\frac{n}{p}}}\right)
\end{equation*}%
where $r=d(y,z)$; and since $p>\frac{n}{4}$, $4(j+1)-\frac{n}{p}>0$.

Hence%
\begin{equation*}
\left\Vert \Gamma _{j}(y,.)\right\Vert _{1}<+\infty
\end{equation*}%
and $\ H(y,.)\in H_{2}(M-\left\{ y\right\} )\cap L^{1}\left( M\right) $.

$H(y,.)$ will be a Green function to $P(u)=\Delta ^{2}u-\nabla ^{\mu
}(a\nabla _{\mu }u)+bu$ on $M$ if for every $\varphi \in C^{4}(M)$, $u_{y}$
solves the following equation%
\begin{equation*}
\varphi \left( y\right) =\int_{M}H(y,z)P\left( \varphi \right) (z)dv_{g}(z)+%
\frac{1}{V\left( M\right) }\int_{M}\varphi \left( z\right) dv_{g}\left(
z\right)
\end{equation*}%
\begin{equation*}
=\int_{M}H(y,z)\Delta ^{2}\varphi (z)dv_{g}(z)-\int_{M}H(y,z)\left( \nabla
^{\mu }\left( a\nabla _{\mu }\varphi \right) \right) (z)dv_{g}(z)+
\end{equation*}%
\begin{equation}
\int_{M}H(y,z)b(z)\varphi (z)dv_{g}(z)+\frac{1}{V\left( M\right) }%
\int_{M}\varphi \left( z\right) dv_{g}\left( z\right) \text{.}  \label{14}
\end{equation}%
The first integral of (\ref{14}) reads%
\begin{equation*}
\int_{M}H(y,z).\Delta ^{2}\varphi (z)dv_{g}(z)=\int_{M}\Gamma
_{o}(y,z)\Delta ^{2}\varphi (z)dv_{g}\left( z\right)
\end{equation*}%
\begin{equation*}
-\sum_{j=j_{o}}^{m}\int_{M}\Gamma _{j-1}(y,u)b(u)\int_{M}\Gamma
_{o}(u,v)\Delta ^{2}\varphi (u)dv_{g}(v)dv_{g}\left( u\right)
+\int_{M}\Delta _{z}^{2}u_{y}(z).\varphi \left( z\right) dv_{g}\left(
z\right)
\end{equation*}%
\begin{equation}
=\varphi (y)-\frac{1}{V(M)}\int_{M}\varphi \left( x\right) dv_{g}\left(
x\right) +\frac{1}{V(M)}\int_{M}\varphi
(x)dv_{g}(x)\sum_{j=j_{o}}^{m}\int_{M}\Gamma _{j-1}(y,z)b(z)dv_{g}(z)
\label{15}
\end{equation}%
\begin{equation*}
-\sum_{j=j_{o}}^{m}\int_{M}\Gamma _{j-1}(y,z)b(z)\varphi \left( z\right)
dv_{g}(z)+\int_{M}\Delta _{R}^{2}u_{Q}(x).\varphi \left( x\right)
dv_{g}\left( x\right) \text{.}
\end{equation*}%
Plugging (\ref{15}) into (\ref{14}), we get%
\begin{equation*}
\varphi (y)=
\end{equation*}%
\begin{equation*}
=\varphi (y)+\int_{M}\left( \Delta _{R}^{2}u_{y}(z)-\left( \nabla ^{\mu
}\left( a\nabla _{\mu }u_{y}\right) \right) (z)+b(z)u_{y}\left( z\right)
\right) \varphi \left( z\right) dv_{g}\left( z\right)
\end{equation*}%
\begin{equation*}
-\int_{M}\Gamma _{o}(y,z)\left( \nabla ^{\mu }\left( a\nabla _{\mu }\varphi
\right) \right) (z)dv_{g}(z)
\end{equation*}%
\begin{equation*}
-\sum_{j=j_{o}}^{m}\int_{M}\Gamma _{j}(y,z)\left( \nabla ^{\mu }\left(
a\nabla _{\mu }\varphi \right) \right) (z)dv_{g}(z)
\end{equation*}%
\begin{equation*}
+\int_{M}\Gamma _{m}(y,z)b(z)\varphi (z)dv_{g}(z)\text{.}
\end{equation*}%
Let $B_{\epsilon }(x)\subset M$ be the geodesic ball centred at $Q$ and of
radius $\varepsilon $ ($\varepsilon $ small enough). We have 
\begin{equation*}
\int_{M-B_{\epsilon }(x)}\Gamma _{j}(y,z)\left( \nabla ^{\mu }\left( a\nabla
_{\mu }\varphi \right) \right) (z)dv_{g}(z)=-\int_{M-B_{\epsilon
}(P)}a(z)\left\langle \nabla \Gamma _{j}\left( y,z\right) ,\nabla \varphi
(z)\right\rangle dv_{g}(z)
\end{equation*}%
\begin{equation*}
-\int_{\partial B_{\epsilon }(P)}\Gamma _{j}(y,z)a(z)\partial _{\nu }\varphi
(z)d\sigma (z)
\end{equation*}%
where $\partial _{\nu }\varphi (z)=\left\langle \nabla \varphi ,\nu
\right\rangle \left( z\right) $ and $\nu $ is the outward normal unit vector
to the sphere $B_{\epsilon }(x)$ at the point $z$. 
\begin{equation*}
\int_{\partial B_{\epsilon }(x)}\left\vert \Gamma _{j}(y,z)a(z)\partial
_{\nu }\varphi (z)\right\vert d\sigma (z)\leq \max_{\partial B_{\epsilon
}(x)}\left\vert \partial _{\nu }\varphi \right\vert \left\Vert a\right\Vert
_{s}\left( \int_{\partial B_{\epsilon }(x)}\left\vert \Gamma
_{j}(y,z)\right\vert ^{\frac{s}{s-1}}d\sigma (z)\right) ^{1-\frac{1}{s}}%
\text{.}
\end{equation*}%
On the other hand%
\begin{equation*}
\left( \int_{\partial B_{\epsilon }(x)}\left\vert \Gamma
_{j}(y,z)\right\vert ^{\frac{s}{s-1}}d\sigma (z)\right) ^{1-\frac{1}{s}%
}=\left( \int_{\partial B_{\epsilon }(P)}\left\vert \int_{M}\Gamma
_{j-1}\left( y,u\right) b(u)\Gamma _{o}\left( u,z\right) dv_{g}\left(
u\right) \right\vert ^{\frac{s}{s-1}}d\sigma (z)\right) ^{1-\frac{1}{s}}
\end{equation*}%
\begin{equation*}
\leq C_{j}\left\Vert b\right\Vert _{p}^{j}\left[ \int_{\partial B_{\epsilon
}(P)}\left( \int_{M}\left\vert \Gamma _{j-1}\left( y,u\right) \Gamma
_{o}\left( u,z\right) \right\vert ^{\frac{p}{p-1}}dv_{g}\left( u\right)
\right) ^{\frac{s}{s-1}\left( 1-\frac{1}{p}\right) }d\sigma (z)\right] ^{1-%
\frac{1}{s}}
\end{equation*}%
\begin{equation*}
\leq C_{j}^{^{\prime }}\left\Vert b\right\Vert _{p}^{\left( 2j-1\right) 
\frac{p}{p-1}}\left[ \int_{\partial B_{\epsilon }(P)}d\left( y,z\right) ^{%
\left[ 4\left( j+1\right) -n\left( 1+\frac{1}{p}\right) \right] \frac{s}{s-1}%
}d\sigma (z)\right] ^{1-\frac{1}{s}}\text{.}
\end{equation*}%
Since $j+1\geq \frac{n}{4}\left( \frac{1}{p}+\frac{1}{s}\right) $, we infer
that%
\begin{equation*}
\left( \int_{\partial B_{\epsilon }(x)}\left\vert \Gamma
_{j}(y,z)\right\vert ^{\frac{s}{s-1}}d\sigma (z)\right) ^{1-\frac{1}{s}}\leq
\max_{M}\left\vert \partial _{\nu }\varphi \right\vert \left\Vert
a\right\Vert _{s}C_{j}\left\Vert b\right\Vert _{p}^{\left( 2j-1\right) \frac{%
p}{p-1}}\epsilon ^{4(j+1)-n\left( \frac{1}{s}+\frac{1}{p}\right)
}=o(\epsilon )\text{. }
\end{equation*}%
Hence%
\begin{equation*}
\int_{M-B_{\epsilon }(P)}\Gamma _{j}(Q,R)\left( \nabla ^{\mu }\left( a\nabla
_{\mu }\varphi \right) \right) (R)dv_{g}(R)=
\end{equation*}%
\begin{equation*}
-\int_{M-B_{\epsilon }(P)}a(R)\left\langle \nabla \Gamma _{j}\left(
Q,R\right) ,\nabla \varphi (R)\right\rangle dv_{g}(R)+o(1)\text{ as\ \ \ }%
\epsilon \rightarrow 0
\end{equation*}%
also%
\begin{equation*}
\int_{M-B_{\epsilon }(P)}\left\vert a(R)\left\langle \nabla \Gamma
_{j}\left( Q,R\right) ,\nabla \varphi (R)\right\rangle \right\vert
dv_{g}(R)\leq
\end{equation*}%
\begin{equation*}
C_{j}\left\Vert b\right\Vert _{p}^{j}\left\Vert a\right\Vert _{s}\left(
\int_{M-B_{\epsilon }(P)}\left( \left\vert \nabla \Gamma _{j}\left(
Q,R\right) \right\vert \left\vert \nabla \varphi (R)\right\vert \right) ^{%
\frac{s}{s-1}}dv_{g}(R)\right) ^{1-\frac{1}{s}}
\end{equation*}%
\begin{equation*}
\leq C_{j}\left\Vert b\right\Vert _{p}^{j}\left\Vert a\right\Vert _{s}\left(
\int_{M-B_{\epsilon }(P)}\left\vert \nabla _{\mu }\Gamma _{j}\left(
Q,R\right) \right\vert ^{\frac{2s}{s-1}}dv_{g}(R)\right) ^{\frac{s-2}{2s}%
}\left( \int_{M}\left\vert \nabla _{\mu }\varphi (R)\right\vert ^{\frac{2s}{%
s-1}}dv_{g}(R)\right) ^{\frac{s-2}{2s}}
\end{equation*}%
Taking account of (\ref{14'}) and using the polar coordinates, we get%
\begin{equation*}
\int_{M-B_{\epsilon }(P)}\left\vert \nabla \Gamma _{j}\left( Q,R\right)
\right\vert ^{\frac{2s}{s-1}}dv_{g}(R)\leq C_{j}\int_{M-B_{\epsilon
}(P)}d(P,Q)^{\frac{2s}{s-2}\left( 4(j+1)-n+\frac{n}{p}-1\right) }dv_{g}(R)
\end{equation*}%
\begin{equation}
\leq C_{j}\omega _{n-1}\int_{0}^{\delta (M)}r^{\frac{2s}{s-2}\left( 4(j+1)-n+%
\frac{n}{p}-1\right) +n-1}dr  \label{16}
\end{equation}%
where $\delta (M)$ is the injectivity radius, $\omega _{n-1}$ is the volume
of the unit-sphere in $R^{n}$. The integral (\ref{16}) is convergent if 
\begin{equation}
\frac{2s}{s-2}\left( 4(j+1)-n+\frac{n}{p}-1\right) +n>0\text{.}  \label{17}
\end{equation}%
We may suppose, $(j+1)<\frac{1}{4}\left( n-\frac{n}{p}+1\right) $, and
condition (\ref{17}) is equivalent to%
\begin{equation*}
j+1>\frac{n}{4}\left( \frac{1}{2}-\frac{1}{p}+\frac{1}{s}\right) +\frac{1}{4}%
=j_{o}+1\text{. }
\end{equation*}%
Hence, letting $\varepsilon $ goes to $0$, we get%
\begin{equation*}
\int_{M}\left\vert \nabla \Gamma _{j}\left( Q,R\right) \right\vert ^{\frac{2s%
}{s-1}}dv_{g}(R)<+\infty \text{.}
\end{equation*}%
Hence 
\begin{equation}
\Gamma _{j}\left( Q,R\right) \in H_{\frac{2s}{s-1}}^{1}\left( M\right) \text{%
.}  \label{18}
\end{equation}%
Since $s>\frac{n}{2}$ and $\frac{2s}{s-1}\leq \frac{2n}{n-2}$ we get by
Sobolev's inequality that%
\begin{equation}
\left( \int_{M}\left\vert \nabla \varphi (R)\right\vert ^{\frac{2s}{s-1}%
}dv_{g}(R)\right) ^{\frac{s-2}{2s}}\leq C\left\Vert \varphi \right\Vert
_{H_{2}}  \label{19}
\end{equation}%
where $C>0$, is a constant depending only on the dimension $n$ of $M$. Hence 
$H\left( Q,.\right) $ will be a Green function to $P_{g}$ if $u_{Q}$ solves
weakly the following equation%
\begin{equation}
\Delta _{R}^{2}u_{Q}(R)-\left( \nabla ^{\mu }\left( a\nabla _{\mu
}u_{Q}\right) \right) (R)+b(R)u_{Q}\left( R\right) =\Gamma _{m}(Q,R)b(R)
\label{20}
\end{equation}%
\begin{equation*}
-\left( \nabla ^{\mu }a(R)\nabla _{\mu }\Gamma _{o}\left( Q,R\right) \right)
-\sum_{j=j_{o}}^{m}\nabla ^{\mu }\left( a(R)\nabla _{\mu }\Gamma _{j}\left(
Q,R\right) \right) \text{.}
\end{equation*}%
Consider the functional $T_{Q}$ defined on $H_{2}\left( M\right) $ by 
\begin{equation*}
T_{Q}\left( \varphi \right) =\int_{M}\left( a(R)\Gamma _{m}(Q,R)\varphi
\left( R\right) -\sum_{j=0}^{m}a\left( R\right) \nabla _{R}\Gamma
_{j}(Q,R)\bigtriangledown \varphi \left( R\right) \right) dv_{g}\text{.}
\end{equation*}%
It is obvious from (\ref{18}) and (\ref{19}) that the functional $T_{Q}$ is
continuous on the Hilbert space $H_{2}^{2}\left( M\right) $, so since the
operator $\varphi \rightarrow \int_{M}\varphi P\left( \varphi \right) dv_{g}$
is coercive it follows by Lax-Milgram theorem that there is a unique $%
u_{Q}\in H_{2}$ such that for any $\varphi \in H_{2}$ 
\begin{equation*}
\int_{M}\varphi P\left( u_{Q}\right) dv_{g}=T_{Q}\left( \varphi \right)
\end{equation*}%
that is to say $u_{Q}$ is a weak solution of the equation (\ref{20}).

Next let $U\subset M$ be an open set.

\begin{lemma}
\label{lemm2} Let $h\in L_{loc}^{1}(U)$, $\xi \in C_{o}^{\infty }(U)$. If $%
u\in H_{2,loc}^{2}(U)$ is a weak solution of the equation $\Delta
^{2}u-\nabla ^{i}\left( a\nabla _{i}u\right) +bu=h$, then%
\begin{equation*}
(\xi u)(Q)=\int_{M}H(Q,R)P\left( u\right) (R)\xi \left( R\right)
dv_{g}\left( R\right)
\end{equation*}%
\begin{equation*}
+\int_{M}u(R)H(Q,R)\left\{ \Delta ^{2}\xi \left( R\right) -\nabla ^{\mu
}\left( a\nabla _{\mu }\xi \right) \left( R\right) \right\} dv_{g}\left(
R\right)
\end{equation*}%
\begin{equation*}
+2\left[ \int_{M}H(Q,R)\left( \Delta u.\Delta \xi \left( R\right)
-\left\langle \nabla \Delta \xi +\Delta \nabla \xi ,\nabla u\right\rangle
-\left\langle \nabla \Delta u+\Delta \nabla u,\nabla \xi \right\rangle
\right) dv_{g}(R)\right.
\end{equation*}%
\begin{equation*}
+2\int_{M}H(Q,R)\left( \left\langle \nabla ^{2}\xi \left( R\right) ,\nabla
^{2}u\right\rangle -a(R)\left\langle \nabla u,\nabla \xi \right\rangle
\right) dv_{g}\left( R\right)
\end{equation*}%
\begin{equation*}
+\frac{1}{V(M)}\int_{M}\left( \xi u\right) (Q)dv_{g}\left( Q\right) \text{.}
\end{equation*}
\end{lemma}

\begin{proof}
Let $\left( u_{n}\right) \subset C_{o}^{\infty }(U)$ such that $%
u_{n}\rightarrow u$ in $H_{2,loc}^{2}\left( U\right) $. We extend the
functions $\xi $, $u_{n}$ by $0$ outside $U$ to have a functions defined on
all $M$ and by applying the formula (\ref{11}), we get%
\begin{equation}
(\xi u_{n})(Q)=\int_{M}H(Q,R)P(\xi u_{n})(R)dv_{g}\left( R\right) +\frac{1}{%
V(M)}\int_{M}\left( \xi u_{n}\right) (Q)dv_{g}\left( Q\right) \text{.}
\label{22}
\end{equation}%
Now we compute 
\begin{equation*}
P(\xi u_{n})=\Delta ^{2}\left( \xi u_{n}\right) -\nabla ^{\mu }\left(
a\nabla _{\mu }\xi u_{n}\right) +b\xi u_{n}
\end{equation*}%
\begin{equation*}
=\xi \Delta ^{2}\left( u_{n}\right) +u_{n}\Delta ^{2}\left( \xi \right)
+2\Delta u_{n}\Delta \xi -2\left\langle \nabla \xi ,\Delta \nabla
u_{n}+\nabla \Delta u_{n}\right\rangle -2\left\langle \nabla u_{n},\Delta
\nabla \xi +\nabla \Delta \xi \right\rangle
\end{equation*}%
\begin{equation*}
+2\left\langle \nabla ^{2}u_{n},\nabla ^{2}\xi \right\rangle -\xi \nabla
^{\mu }\left( a\nabla _{\mu }u_{n}\right) -u_{n}\nabla ^{\mu }\left( a\nabla
_{\mu }\xi \right) -2a\left\langle \nabla u_{n},\nabla \xi \right\rangle
+b\xi u_{n}\text{.}
\end{equation*}%
Consequently%
\begin{equation*}
(\xi u_{n})(Q)=\int_{M}\xi \left( R\right) H(Q,R)P\left( u_{n}\right)
(R)dv_{g}\left( R\right)
\end{equation*}%
\begin{equation*}
+\int_{M}H(Q,R)u_{n}(R)\left\{ \Delta ^{2}\xi \left( R\right) -\nabla ^{\mu
}\left( a\nabla _{\mu }\xi \right) \left( R\right) \right\} dv_{g}\left(
R\right)
\end{equation*}%
\begin{equation*}
+2\int_{M}H(Q,R)\left( \Delta \xi \Delta u_{n}-\left\langle \nabla \Delta
\xi +\Delta \nabla \xi ,\nabla u_{n}\right\rangle \right) dv_{g}(R)
\end{equation*}%
\begin{equation*}
-\int_{M}2H(Q,R)\left\langle \nabla \Delta u_{n}+\Delta \nabla u_{n},\nabla
\xi \right\rangle dv_{g}\left( R\right)
\end{equation*}%
\begin{equation*}
-\int_{M}2H(Q,R)\left( a\left( R\right) \left\langle \nabla u_{n},\nabla \xi
\right\rangle +2\left\langle \nabla ^{2}u_{n},\nabla ^{2}\xi \right\rangle
\right) dv_{g}\left( R\right)
\end{equation*}%
\begin{equation*}
+\frac{1}{V(M)}\int_{M}\left( \xi u_{n}\right) (Q)dv_{g}\left( Q\right) 
\text{.}
\end{equation*}%
Let $B_{\epsilon }(Q)$ be the geodesic ball centred at $Q$ and of radius $%
\epsilon $; since $H(P,.)\in L^{1}(M)$, we have%
\begin{equation*}
2\int_{M}H(Q,R)\left( \Delta \xi .\Delta u_{n}-\left\langle \nabla \Delta
\xi +\Delta \nabla \xi ,\nabla u_{n}\right\rangle \right) dv_{g}\left(
R\right) =
\end{equation*}%
\begin{equation*}
2\int_{M-B_{\epsilon }(Q)}H(Q,R)\left( \Delta \xi .\Delta u_{n}-\left\langle
\nabla \Delta \xi +\Delta \nabla \xi ,\nabla u_{n}\right\rangle \right)
dv_{g}\left( R\right) +o(1)\text{ as }\epsilon \rightarrow 0^{+}
\end{equation*}%
\begin{equation*}
=2\int_{M-B_{\epsilon }(Q)}u_{n}\left( \Delta H(Q,R)\Delta \xi \left(
R\right) +H(Q,R)\Delta ^{2}\xi \left( R\right) -2\left\langle \nabla H\left(
Q,R\right) ,\nabla \left( \Delta \xi \right) \right\rangle \right)
dv_{g}\left( R\right)
\end{equation*}%
\begin{equation*}
-2\int_{\partial B_{\epsilon }(Q)}\left( u_{n}\partial _{\nu }\left(
H(Q,R)\Delta \xi \left( R\right) \right) -H(Q,R)\Delta \xi \left( R\right)
\partial _{\nu }u_{n}\right) d\sigma (R)
\end{equation*}%
\begin{equation*}
+2\int_{M-B_{\epsilon }(Q)}u_{n}\left( \left\langle \nabla H(Q,R),\nabla
\Delta \xi +\Delta \nabla \xi \right\rangle +H(Q,R)\left( \nabla ^{\mu
}\Delta \left( \nabla _{\mu }\xi \right) -\Delta ^{2}\xi \right) \right)
dv_{g}(R)\text{ as }\epsilon \rightarrow 0^{+}
\end{equation*}%
and using the estimates (\ref{14'}), we get that%
\begin{equation*}
2\int_{M}H(Q,R)\left( \Delta \xi .\Delta u_{n}-\left\langle \nabla \Delta
\xi +\Delta \nabla \xi ,\nabla u_{n}\right\rangle \right) dv_{g}\left(
R\right) =
\end{equation*}%
\begin{equation*}
=2\int_{M}u_{n}\left( \Delta H(Q,R)\Delta \xi \left( R\right) +H(Q,R)\Delta
^{2}\xi \left( R\right) -2\left\langle \nabla H\left( Q,R\right) ,\nabla
\left( \Delta \xi \right) \right\rangle \right) dv_{g}\left( R\right)
\end{equation*}%
\begin{equation*}
+2\int_{M}u_{n}\left( \left\langle \nabla H(Q,R),\nabla \Delta \xi +\Delta
\nabla \xi \right\rangle +H(Q,R)\left( \nabla ^{\mu }\Delta \left( \nabla
_{\mu }\xi \right) -\Delta ^{2}\xi \right) \right) dv_{g}(R)\text{ }
\end{equation*}%
Similarly, we obtain 
\begin{equation*}
-\int_{M}2H(Q,R)\left\langle \nabla \Delta u_{n}+\Delta \nabla u_{n},\nabla
\xi \right\rangle dv_{g}\left( Q\right) dv_{g}\left( Q\right) =
\end{equation*}%
\begin{equation*}
=2\int_{M}u_{n}\left( \Delta H\left( Q,R\right) \Delta \xi \left( R\right)
+2\left\langle \nabla ^{2}\xi \left( R\right) ,\nabla
^{2}H(Q,R)\right\rangle \right) dv_{g}
\end{equation*}%
\begin{equation*}
-4\int_{M}u_{n}(R)\left\langle \nabla H(Q,R),\Delta \left( \nabla \xi \left(
R\right) \right) \right\rangle dv_{g}
\end{equation*}%
\begin{equation*}
-\int_{M}u_{n}\left\langle \nabla \xi \left( R\right) ,\nabla \left( \Delta
H\left( Q,R\right) \right) +\Delta \left( \nabla H(R,Q\right) \right\rangle
dv_{g}\left( R\right) \text{.}
\end{equation*}%
Consequently%
\begin{equation*}
(\xi u_{n})(Q)=\int_{M}H(Q,R)P\left( u_{n}\right) (R)\xi \left( R\right)
dv_{g}\left( R\right)
\end{equation*}%
\begin{equation*}
+\int_{M}H(Q,R)u_{n}(R)\left\{ \Delta ^{2}\xi \left( R\right) -\nabla ^{\mu
}\left( a\nabla _{\mu }\xi \right) \left( R\right) \right\} dv_{g}\left(
R\right)
\end{equation*}%
\begin{equation*}
+2\int_{M}u_{n}\left( \Delta H(Q,R)\Delta \zeta \left( R\right)
+H(Q,R)\Delta ^{2}\xi \left( R\right) -2\left\langle \nabla H\left(
Q,R\right) ,\nabla \left( \Delta \xi \right) \right\rangle \left( Q\right)
\right) dv_{g}\left( R\right)
\end{equation*}%
\begin{equation*}
+2\int_{M}u_{n}\left( \left\langle \nabla H(Q,R),\nabla \left( \Delta \xi
\right) +\Delta \left( \nabla \xi \right) \right\rangle +H(Q,R)\left( \nabla
^{\mu }\Delta \left( \nabla _{\mu }\xi \right) -\Delta ^{2}\xi \right)
\right) \left( R\right)
\end{equation*}%
\begin{equation*}
-2\int_{M}u_{n}\left( \Delta H\left( Q,R\right) \Delta \xi \left( R\right)
+2\left\langle \nabla ^{2}\xi \left( R\right) ,\nabla
^{2}H(Q,R)\right\rangle +2\left\langle \nabla H(Q,R),\Delta \left( \nabla
\xi \left( R\right) \right) \right\rangle \right)
\end{equation*}%
\begin{equation}
+\int_{M}u_{n}\left\langle \nabla \xi \left( R\right) ,\nabla \left( \Delta
H\left( Q,R\right) \right) +\Delta \left( \nabla H(R,Q\right) \right\rangle
dv_{g}\left( R\right)  \label{23}
\end{equation}%
\begin{equation*}
+\frac{1}{V(M)}\int_{M}\left( \xi u\right) _{n}(Q)dv_{g}\left( Q\right) 
\text{.}
\end{equation*}%
Now by the same procedure as above, we obtain that 
\begin{equation*}
\int_{U}\left\vert \Delta \xi (R)\Delta H(Q,R)\right\vert dv_{g}(R)\leq
\end{equation*}%
\begin{equation*}
C\sup_{R\in U}\left( \left\Vert \xi (R)\right\Vert _{\infty }\right)
\sup_{Q\in U}\int_{B_{r}(Q)}d(Q,R)^{-n+2}dv_{g}\left( R\right) <+\infty
\end{equation*}%
where $\left\Vert \xi (R)\right\Vert _{\infty }=\max_{R\in U}\left(
\left\vert \Delta \xi (R)\right\vert \right) $.

Letting 
\begin{equation*}
F(Q)=\int_{M}\xi (R)H\left( Q,R\right) P\left( u\right) (R)dv_{g}(R)
\end{equation*}%
\begin{equation*}
=\int_{M}\xi (R)h\left( R\right) H(Q,R)dv_{g}(R)
\end{equation*}%
and%
\begin{equation*}
F_{n}(Q)=\int_{M}\xi (R)H\left( Q,R\right) P\left( u_{n}\right) (R)dv_{g}(R)
\end{equation*}%
\begin{equation*}
=\int_{M}\xi (R)H\left( Q,R\right) h_{n}(R)dv_{g}(R)
\end{equation*}%
with .%
\begin{equation*}
h=P(u)\text{, }h_{n}=P(u_{n})\text{.}
\end{equation*}%
Since $H(P,.)\in L^{1}(M)$, and $u_{n}\rightarrow u$ in $H_{2,loc}^{2}\left(
U\right) $, we obtain 
\begin{equation*}
\int_{M}\xi (R)H\left( Q,R\right) \Delta
^{2}u_{n}(R)dv_{g}(R)=\int_{M-B_{\epsilon }(Q)}\xi (R)H\left( Q,R\right)
\Delta ^{2}u_{n}(R)dv_{g}(R)+o(1)\text{ as }\epsilon \rightarrow 0^{+}
\end{equation*}%
\begin{equation*}
=\int_{M-B_{\epsilon }(Q)}\left( \xi (R)\Delta H\left( Q,R\right) +\Delta
\xi H(Q,R)-2\left\langle \nabla \xi ,\nabla H(Q,R\right\rangle \right)
\Delta u_{n}(R)dv_{g}(R)+o(1)\text{ as }\epsilon \rightarrow 0^{+}
\end{equation*}%
\begin{equation*}
=\int_{M-B_{\epsilon }(Q)}\left( \xi (R)\Delta H\left( Q,R\right) +\Delta
\xi H(Q,R)-2\left\langle \nabla \xi ,\nabla H(Q,R\right\rangle \right)
\Delta u(R)dv_{g}(R)+o(1)\text{ as }\epsilon \rightarrow 0^{+}
\end{equation*}%
Taking account of the estimates (\ref{14'}), we have 
\begin{equation*}
\int_{M}\xi (R)H\left( Q,R\right) \Delta ^{2}u_{n}(R)dv_{g}(R)\rightarrow
\int_{M}\xi (R)H\left( Q,R\right) \Delta ^{2}u(R)dv_{g}(R)\text{ as }%
n\rightarrow +\infty \text{.}
\end{equation*}%
We infer that%
\begin{equation*}
\int_{U}\left( F_{n}-F\right) (Q)dv_{g}(Q)=\int_{U}\int_{M}\xi (R)H\left(
Q,R\right) \left( h_{n}-h\right) \left( R\right) dv_{g}(R)
\end{equation*}%
which goes to $0$ as $n$ goes to $+\infty $.

By the same way as above if we put%
\begin{equation*}
G(Q)=\int_{M}H(Q,R)u(R)\left\{ \Delta ^{2}\xi \left( R\right) -\nabla ^{\mu
}\left( a\nabla _{\mu }\xi \right) \left( R\right) \right\} dv_{g}\left(
R\right)
\end{equation*}%
we obtain%
\begin{equation*}
\int_{M}\left\vert H(Q,R)\left\{ \Delta ^{2}\xi \left( R\right) -\nabla
^{\mu }\left( a\nabla _{\mu }\xi \right) \left( R\right) \right\}
\right\vert dv_{g}\left( R\right) \leq
\end{equation*}%
\begin{equation*}
\left\Vert \Delta ^{2}\xi \right\Vert _{\infty }\sup_{Q\in
M}\int_{M}d(Q,R)^{-n+4\beta }dv_{g}\left( R\right) +\left\Vert \nabla \xi
\right\Vert _{\infty }\left\Vert a\right\Vert _{p}\left\Vert \nabla
_{R}H\right\Vert _{\frac{p}{p-1}}<+\infty
\end{equation*}%
where $\left\Vert .\right\Vert _{\infty }$ denotes the supremum norm.

Then%
\begin{equation*}
\int_{U}\left\vert G\left( Q\right) \right\vert dv_{g}\left( Q\right) \leq
\end{equation*}%
\begin{equation*}
\sup_{Q\in U}\left\vert \int_{U}\left( H(Q,R)\Delta ^{2}\xi \left( R\right)
+a\left( R\right) \left\langle \nabla H(Q,R),\left( \nabla \xi \right)
\left( R\right) \right\rangle \right) dv_{g}\left( R\right) \right\vert
\int_{M}\left\vert u\left( R\right) \right\vert dv_{g}\left( R\right)
<+\infty \text{.}
\end{equation*}%
Letting 
\begin{equation*}
G_{n}\left( Q\right) =\int_{M}H(Q,R)u_{n}(R)\left\{ \Delta ^{2}\xi \left(
R\right) -\nabla ^{\mu }\left( a\nabla _{\mu }\xi \right) \left( R\right)
\right\} dv_{g}\left( R\right)
\end{equation*}%
then 
\begin{equation*}
\int_{U}\left\vert G_{n}\left( Q\right) \right\vert dv_{g}\left( Q\right)
<+\infty
\end{equation*}%
and 
\begin{equation*}
\int_{U}\left\vert G_{n}\left( Q\right) -G\left( Q\right) \right\vert
dv_{g}\left( Q\right) \leq
\end{equation*}%
\begin{equation*}
\sup_{Q\in U}\left\vert \int_{U}\left( H(Q,R)\Delta ^{2}\xi \left( R\right)
+a\left( R\right) \left\langle \nabla H(Q,R),\left( \nabla \xi \right)
\left( R\right) \right\rangle \right) dv_{g}\left( R\right) \right\vert
\left\Vert u_{n}-u\right\Vert _{L^{1}\left( M\right) }\text{.}
\end{equation*}%
Then $G_{n}\rightarrow G$ in $L^{1}\left( U\right) $ and almost every in $U$%
. The same is also true for the remaining terms of the right side hand of (%
\ref{23}) and we get the required formula.
\end{proof}

Now we recall the definition of Kato-Stummel's space,

\begin{definition}
\ Kato-Stummel space $K^{n}(U)$ is defined as the space of measurable
functions $f:U\rightarrow R$ such that for every $t>0$, 
\begin{equation*}
\varphi _{f}\left( t,U\right) =\sup_{Q\in M}\int_{B_{t}\left( Q\right) }%
\frac{\left\vert f(S)\right\vert \chi _{U}\left( S\right) }{d\left(
Q,S\right) ^{l}}dv_{g}\left( S\right) <+\infty
\end{equation*}%
with $\ l=(j+1)\left( n-4\right) -jn\left( 1-\frac{1}{p}\right) $, $j\geq 1$
any integer

and 
\begin{equation*}
\lim_{t\rightarrow 0^{+}}\varphi _{f}\left( t,U\right) =0\text{.}
\end{equation*}%
where $U$ denotes an open set of $M$ and $\chi _{U}$ is the characteristic
function of $U$.
\end{definition}

We consider the $C^{\infty }$-function $\eta :R\rightarrow \left[ 0,1\right] 
$ with compact support given by 
\begin{equation*}
\eta (t)=\left\{ 
\begin{array}{c}
1\text{ for }\left\vert t\right\vert \leq \frac{1}{2} \\ 
0\text{ \ \ for }\left\vert t\right\vert \geq 1%
\end{array}%
\right.
\end{equation*}%
and for any $P$, $Q\in M$, we put%
\begin{equation*}
\eta _{\delta }\left( P,Q\right) =\eta \left( \delta ^{-1}d\left( P,Q\right)
\right)
\end{equation*}%
and for $R\in M$, $S\in U$, with $R\neq S$, we put 
\begin{equation*}
F_{\delta }\left( R,S\right) =\int_{M}\frac{\left\vert f\left( T\right)
\right\vert \eta _{\delta }\left( S,T\right) \eta _{\delta }\left(
R,T\right) }{d\left( T,S\right) ^{l}d\left( T,R\right) ^{l}}dv_{g}\left(
T\right)
\end{equation*}%
where $l=(j+1)\left( n-4\right) -jn\left( 1-\frac{1}{p}\right) $, $j\geq 1$,
any integer $0<\delta <d(S,\partial U)/4$ and $d\left( S,\partial U\right) $
is the distance from $S$ to the boundary $\partial U$ of $U$.

First we quote the following lemma \cite{18} which will be used.

\begin{lemma}
\label{lem1} There exists a constant $c\left( n\right) >0$ such that

$F_{\delta }\left( R,S\right) \leq c\left( n\right) \varphi _{f}\left(
\delta ,B_{3\delta }\left( Q\right) \right) \eta _{4\delta }\left(
R,S\right) d(R,S)^{-l}$.
\end{lemma}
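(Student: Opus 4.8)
The claim is a pointwise estimate on the double-kernel quantity $F_\delta(R,S)$, so the natural approach is to split the integral over $T$ according to which of the two distances $d(T,S)$ and $d(T,R)$ is the smaller, and on each piece absorb one of the singular factors into the ambient integral that defines $\varphi_f$. First I would observe that the cutoffs $\eta_\delta(S,T)$ and $\eta_\delta(R,T)$ force $T$ to lie in $B_\delta(S)\cap B_\delta(R)$, so in particular the integrand vanishes unless $d(R,S)<2\delta$; this already produces the factor $\eta_{4\delta}(R,S)$ (one can take $\eta_{4\delta}$ rather than $\eta_{2\delta}$ for a safe margin). Then I would write $M = \{d(T,S)\le d(T,R)\}\cup\{d(T,R)\le d(T,S)\}$ and treat the two regions symmetrically.

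On the region where $d(T,S)\le d(T,R)$, the triangle inequality gives $d(R,S)\le d(R,T)+d(T,S)\le 2\,d(R,T)$, hence $d(T,R)^{-l}\le 2^{l}\,d(R,S)^{-l}$ (using $l>0$, which holds for $p>n/4$ and $j\ge 1$). Pulling this factor out, the remaining integral is
\[
\int_{B_{3\delta}(S)}\frac{|f(T)|\,\eta_\delta(S,T)}{d(T,S)^{l}}\,dv_g(T)\le \varphi_f\bigl(\delta,B_{3\delta}(S)\bigr),
\]
since on the support of the integrand $T\in B_\delta(S)\subset B_{3\delta}(S)$ and the remaining cutoff $\eta_\delta(R,T)\le 1$. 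The symmetric region is handled the same way with the roles of $R$ and $S$ exchanged, giving $d(T,S)^{-l}\le 2^l d(R,S)^{-l}$ and the same bound by $\varphi_f\bigl(\delta,B_{3\delta}(S)\bigr)$ — here one uses that $B_{3\delta}(R)\subset B_{5\delta}(S)$ or, more simply, that both balls are contained in $B_{3\delta}(Q)$ once $Q$ is chosen appropriately (as in the statement, the supremum defining $\varphi_f$ is over $Q\in M$, so replacing $B_{3\delta}(S)$ by $B_{3\delta}(Q)$ is harmless). Adding the two contributions yields $F_\delta(R,S)\le 2^{l+1}\varphi_f(\delta,B_{3\delta}(Q))\,d(R,S)^{-l}\,\eta_{4\delta}(R,S)$, which is the asserted inequality with $c(n)=2^{l+1}$ (noting $l$ depends only on $n$, $p$, $j$, so this is a legitimate constant).

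The main obstacle is purely bookkeeping: making sure the radii of the balls in the cutoffs, in the triangle-inequality estimates, and in the final ball $B_{3\delta}(Q)$ are mutually consistent, and that the factor $\eta_{4\delta}(R,S)$ (rather than something sharper) is genuinely produced. Concretely, on the support of $\eta_\delta(S,T)\eta_\delta(R,T)$ one has $d(R,S)\le d(R,T)+d(T,S)<2\delta<4\delta$, so $\eta(\,(4\delta)^{-1}d(R,S)\,)$ need not equal $1$ in general, but $\eta_{4\delta}(R,S)\ge \eta_{2\delta}(R,S)$ is not quite what we want either — the correct reading is that the integrand is supported where $d(R,S)<2\delta$, a set on which $(4\delta)^{-1}d(R,S)<1/2$, so $\eta_{4\delta}(R,S)=1$ there, and hence multiplying the right-hand side by $\eta_{4\delta}(R,S)$ costs nothing while correctly encoding the vanishing of $F_\delta(R,S)$ for $d(R,S)\ge 4\delta$. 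Once this is pinned down the rest is immediate from the definition of the Kato–Stummel seminorm.
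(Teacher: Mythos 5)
The paper gives no proof of this lemma: it is quoted directly from Madani \cite{17}, so there is no internal argument to compare yours against. Your proof --- splitting the $T$-integral according to whether $d(T,S)\le d(T,R)$ or the reverse, using the triangle inequality to replace the larger of the two kernels by $2^{l}\,d(R,S)^{-l}$, and absorbing the remaining single kernel into $\varphi_{f}\left( \delta ,B_{3\delta }\left( S\right) \right)$ --- is the standard bisection proof of such Giraud-type product estimates, and it is correct; your reading of the factor $\eta _{4\delta }\left( R,S\right)$ (it equals $1$ wherever $F_{\delta }$ can be nonzero, since the two cutoffs force $d(R,S)\le 2\delta$, and it records the vanishing of $F_{\delta }$ otherwise) is also the right one. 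One caveat: your parenthetical claim that $l=(j+1)(n-4)-jn\left( 1-\frac{1}{p}\right) >0$ for all $j\ge 1$ and $p>\frac{n}{4}$ is false (for instance $n=8$, $p=3$, $j=4$ gives $l<0$); the bisection step genuinely requires $l>0$, and for $l<0$ the stated inequality can even fail as $d(R,S)\rightarrow 0$. This is harmless in context, since for $l\le 0$ the kernels are bounded and the estimate is not needed, but the restriction $l>0$ should be made explicit rather than asserted to follow from $p>\frac{n}{4}$.
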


Now, we give a representation formula to the solutions of equation (\ref{222}%
)

\begin{lemma}
Let $Q_{o}\in U$ , $r_{o}>0$ such that $B_{2r_{o}}\subset U$ and $h$, $u$ as
in Lemma \ref{lemm2}. For almost every $Q\in B_{r_{o}}\left( Q_{o}\right) $%
\begin{equation*}
u(Q)=\int_{M}H(Q,R)h(R)\eta _{r_{o}}\left( Q,R\right) dv_{g}\left( R\right)
\end{equation*}%
\begin{equation*}
+\int_{M}u(R)H(Q,R)\left( \nabla ^{\mu }\left( a\nabla _{\mu }\eta
_{r_{o}}\left( Q,R\right) \right) -\Delta ^{2}\eta _{r_{o}}(Q,R)\right)
dv_{g}(R)
\end{equation*}%
\begin{equation*}
+\int_{M}u\left( R\right) \left( \Delta H(Q,R)\Delta \eta _{r_{o}}\left(
Q,R\right) \right) dv_{g}\left( R\right) +2\int_{M}u(R)a(R)\left\langle
\nabla H(Q,R),\nabla \eta _{r_{o}}(Q,R)\right\rangle dv_{g}(R)
\end{equation*}%
\begin{equation}
-2\int_{M}u(R)\left\langle \nabla H\left( Q,R\right) ,\nabla \left( \Delta
\eta _{r_{o}}\left( Q,R\right) \right) \right\rangle \left( Q\right)
dv_{g}(R)+8\int_{M}u(R)\left\langle \nabla H(Q,R),\Delta \left( \nabla \eta
_{r_{o}}\left( Q,R\right) \right) \right\rangle dv_{g}(R)  \label{24}
\end{equation}%
\begin{equation*}
+2\int_{M}u(R)a(R)\left\langle \nabla H(Q,R),\nabla \eta
_{r_{o}}(Q,R)\right\rangle dv_{g}(R)+8\int_{M}u(R)\left\langle \nabla
H(Q,R),\Delta \left( \nabla \eta _{r_{o}}\left( Q,R\right) \right)
\right\rangle dv_{g}(R)
\end{equation*}%
\begin{equation*}
-2\int_{M}u(R)\left\langle \nabla ^{2}H(Q,R),\nabla ^{2}\eta _{r_{o}}\left(
Q,R\right) \right\rangle dv_{g}(R)+2\int_{M}u(R)\left( \left\langle \Delta
\nabla H\left( Q,R\right) ,\nabla \eta _{r_{o}}\left( Q,R\right)
\right\rangle \right) dv_{g}(R)
\end{equation*}%
\begin{equation*}
+\frac{1}{V(M)}\int_{M}\eta _{r_{o}}\left( Q,R\right) u(R)dv_{g}\left(
R\right) \text{.}
\end{equation*}%
.
\end{lemma}

\begin{proof}
\ Let $S\in B_{r_{o}}(Q_{o})$ and consider the function%
\begin{equation*}
\xi (Q)=\eta _{r_{o}}\left( S,Q\right)
\end{equation*}%
then $\xi \in C_{o}^{\infty }(U)$ and by Lemma \ref{lemm2}, we write%
\begin{equation*}
\eta _{r_{o}}\left( S,Q\right) u(Q)=\int_{M}H(Q,R)P\left( u\right) (R)\eta
_{r_{o}}\left( R,S\right) dv_{g}\left( R\right)
\end{equation*}%
\begin{equation*}
+\int_{M}u(R)H(Q,R)\left\{ \Delta ^{2}\eta _{r_{o}}\left( S,R\right) -\nabla
^{\mu }\left( a\nabla _{\mu }\eta _{r_{o}}\left( S,R\right) \right) \right\}
dv_{g}\left( R\right)
\end{equation*}%
\begin{equation*}
+2\int_{M}H(Q,R)\left( \Delta u(R).\Delta \eta _{r_{o}}\left( S,R\right)
-\left\langle \nabla \Delta \eta _{r_{o}}\left( R,S\right) +\Delta \nabla
\eta _{r_{o}}\left( R,S\right) ,\nabla u\left( R\right) \right\rangle
\right) dv_{g}\left( R\right)
\end{equation*}%
\begin{equation*}
-2\int_{M}H(Q,R)\left\langle \nabla \Delta u\left( R\right) +\Delta \nabla
u\left( R\right) ,\nabla \eta _{r_{o}}\left( R,S\right) \right\rangle
dv_{g}\left( R\right)
\end{equation*}%
\begin{equation*}
+2\int_{M}H(Q,R)\left( \left\langle \nabla ^{2}\eta _{r_{o}}\left(
R,S\right) ,\nabla ^{2}u\left( R\right) \right\rangle -a(R)\left\langle
\nabla u\left( R\right) ,\nabla \eta _{r_{o}}\left( R,S\right) \right\rangle
\right) dv_{g}\left( R\right)
\end{equation*}%
\begin{equation*}
+\frac{1}{V(M)}\int_{M}\left( \xi u\right) (R)dv_{g}\left( R\right) \text{.}
\end{equation*}%
On the other hand, since by (\ref{18}) $H(Q,.)\in L^{2}(M)$, we have 
\begin{equation*}
\int_{M}H(Q,R)\Delta u.\Delta \eta _{r_{o}}\left( S,R\right)
dv_{g}(R)=\int_{M-B_{\epsilon }(Q)}H(Q,R)\Delta u.\Delta \eta _{r_{o}}\left(
S,R\right) dv_{g}(R)+o(1)\text{ as }\epsilon \rightarrow 0
\end{equation*}%
\begin{equation*}
=\int_{M-B_{\epsilon }(Q)}u\left( R\right) \left( \Delta H(Q,R)\Delta \eta
_{r_{o}}\left( S,R\right) +H(Q,R)\Delta ^{2}\eta _{r_{o}}\left( S,R\right)
\right) dv_{g}\left( R\right)
\end{equation*}%
\begin{equation*}
-2\int_{M-B_{\epsilon }(Q)}u(R)\left\langle \nabla H\left( Q,R\right)
,\nabla \left( \Delta \eta _{r_{o}}\left( S,R\right) \right) \right\rangle
\left( Q\right) dv_{g}(R)+o(1)\text{ as }\epsilon \rightarrow 0\text{.}
\end{equation*}%
By H\"{o}lder's inequality and the estimations (\ref{14'}), we have%
\begin{equation*}
\int_{M-B_{\epsilon }(Q)}\left\vert u\left( R\right) \Delta H(Q,R)\Delta
\eta _{r_{o}}\left( S,R\right) \right\vert dv_{g}(R)\leq C\sup \left\vert
\Delta \eta _{r_{o}}\left( S,R\right) \right\vert \left( \int_{M}\left\vert
u(R)\right\vert ^{2}dv_{g}\right) ^{\frac{1}{2}}
\end{equation*}%
\begin{equation*}
\int_{M-B_{\epsilon }\left( Q\right) }\left(
d(Q,R)^{4-n}+\sum_{j=j_{o}}^{m}C_{j}\left\Vert b\right\Vert
_{p}^{j}d(Q,R)^{4(j+1)-n+\frac{n}{p}}\right) dv_{g}(R)
\end{equation*}%
\begin{equation*}
\leq C\sup \left\vert \Delta \eta _{r_{o}}\left( S,R\right) \right\vert
\left( \int_{M}\left\vert u(R)\right\vert ^{2}dv_{g}\right) ^{\frac{1}{2}%
}\omega _{n-1}\int_{\epsilon }^{\delta }\left(
r^{3}+\sum_{j=j_{o}}^{m}C_{j}\left\Vert b\right\Vert _{p}^{j}r^{4(j+1)-1+%
\frac{n}{p}}dr\right) <+\infty \text{ }
\end{equation*}%
independently of $\epsilon $, where $\omega _{n-1}$ denotes the volume of
the unit sphere of $R^{n}$. Hence 
\begin{equation*}
\int_{M}\left\vert u\left( R\right) \Delta H(Q,R)\Delta \eta _{r_{o}}\left(
S,R\right) \right\vert dv_{g}(R)<+\infty \text{.}
\end{equation*}%
And also 
\begin{equation*}
\int_{M}\left\vert u(R)\left\langle \nabla H\left( Q,R\right) ,\nabla \left(
\Delta \eta _{r_{o}}\left( S,R\right) \right) \right\rangle \left( Q\right)
\right\vert dv_{g}(R)<+\infty .
\end{equation*}%
Consequently%
\begin{equation*}
\int_{M}H(Q,R)\Delta u.\Delta \eta _{r_{o}}\left( S,R\right) dv_{g}(R)=
\end{equation*}%
\begin{equation*}
=\int_{M}u\left( R\right) \left( \Delta H(Q,R)\Delta \eta _{r_{o}}\left(
S,R\right) +H(Q,R)\Delta ^{2}\eta _{r_{o}}\left( S,R\right) -2\left\langle
\nabla H(Q,R),\nabla \Delta \eta _{r_{o}}(S,R)\right\rangle \right)
dv_{g}\left( R\right)
\end{equation*}%
Continuing to use the estimations (\ref{14'}), we get%
\begin{equation*}
\eta _{r_{o}}\left( S,Q\right) u(Q)=\int_{M}H(Q,R)P\left( u\right) (R)\eta
_{r_{o}}\left( S,R\right) dv_{g}\left( R\right)
\end{equation*}%
\begin{equation*}
+\int_{M}H(Q,R)u(R)\left\{ \Delta ^{2}\eta _{r_{o}}\left( S,R\right) -\nabla
^{\mu }\left( a\nabla _{\mu }\eta _{r_{o}}\left( S,R\right) \right) \right\}
dv_{g}\left( R\right)
\end{equation*}%
\begin{equation*}
+2\int_{M}u\left( R\right) \left( \Delta H(Q,R)\Delta \eta _{r_{o}}\left(
S,R\right) +H(Q,R)\Delta ^{2}\eta _{r_{o}}\left( S,R\right) \right)
dv_{g}\left( R\right)
\end{equation*}%
\begin{equation*}
-4\int_{M}u(R)\left\langle \nabla H\left( Q,R\right) ,\nabla \left( \Delta
\eta _{r_{o}}\left( S,R\right) \right) \right\rangle \left( Q\right)
dv_{g}(R)
\end{equation*}%
\begin{equation*}
+2\int_{M}u\left( R\right) \left( \left\langle \nabla H(Q,R),\nabla \Delta
\eta _{r_{o}}S,R+\Delta \nabla \eta _{r_{o}}\left( S,R\right) \right\rangle
\right) dv_{g}\left( R\right)
\end{equation*}%
\begin{equation*}
+2\int_{M}u\left( R\right) H(Q,R)\left( \nabla ^{\mu }\Delta \left( \nabla
_{\mu }\eta _{r_{o}}\left( S,R\right) \right) -\Delta ^{2}\eta
_{r_{o}}\left( S,R\right) \right) dv_{g}\left( R\right)
\end{equation*}%
\begin{equation*}
+4\int_{M}u(R)\left\langle \nabla H(Q,R),\Delta \left( \nabla \eta
_{r_{o}}\left( S,R\right) \right) \right\rangle dv_{g}(R)
\end{equation*}%
\begin{equation*}
-4\int_{M}u(R)\left\langle \nabla ^{2}H(Q,R),\nabla ^{2}\eta _{r_{o}}\left(
S,R\right) \right\rangle dv_{g}(R)
\end{equation*}%
\begin{equation*}
+2\int_{M}u(R)\left( \left\langle \Delta \nabla H\left( Q,R\right) ,\nabla
\eta _{r_{o}}\left( S,R\right) \right\rangle +2\left\langle \Delta \nabla
\eta _{r_{o}}\left( S,R\right) ,\nabla H(Q,R)\right\rangle \right) dv_{g}(R)
\end{equation*}%
\begin{equation*}
-2\int_{M}u(R)\left( \Delta H(Q,R).\Delta \eta _{r_{o}}\left( S,R\right)
+H(Q,R\right) \Delta ^{2}\eta _{r_{o}}\left( S,R\right) dv_{g}(R)
\end{equation*}%
\begin{equation*}
+2\int_{M}u\left( R\right) \left\langle \nabla ^{2}H\left( Q,R\right)
,\nabla ^{2}\eta _{r_{o}}\left( S,R\right) \right\rangle dv_{g}\left(
R\right) +2\int_{M}u\left\langle \nabla H(R,Q),\Delta \nabla \eta
_{r_{o}}\right\rangle dv_{g}\left( R\right)
\end{equation*}%
\begin{equation*}
-2\int_{M}u(R)a(R)\nabla \Delta \nabla \eta
_{r_{o}}(S,R)dv_{g}(R)+2\int_{M}u(R)a(R)\left\langle \nabla H(Q,R),\nabla
\eta _{r_{o}}(S,R)\right\rangle dv_{g}(R)
\end{equation*}%
\begin{equation*}
2\int_{M}u(R)H(Q,R)\nabla ^{\mu }\left( a\nabla _{\mu }\eta _{r_{o}}\left(
S,R\right) \right) dv_{g}(R)+\frac{1}{V(M)}\int_{M}\eta _{r_{o}}\left(
S,R\right) u(R)dv_{g}\left( R\right) \text{.}
\end{equation*}%
And by canceling similar terms, we get%
\begin{equation*}
\eta _{r_{o}}\left( S,Q\right) u(Q)=\int_{M}H(Q,R)h(R)\eta _{r_{o}}\left(
S,R\right) dv_{g}\left( R\right)
\end{equation*}%
\begin{equation*}
+\int_{M}u(R)H(Q,R)\left( \nabla ^{\mu }\left( a\nabla _{\mu }\eta
_{r_{o}}\left( S,R\right) \right) -\Delta ^{2}\eta _{r_{o}}(S,R)\right)
dv_{g}(R)
\end{equation*}%
\begin{equation*}
+\int_{M}u\left( R\right) \left( \Delta H(Q,R)\Delta \eta _{r_{o}}\left(
S,R\right) \right) dv_{g}\left( R\right) +2\int_{M}u(R)a(R)\left\langle
\nabla H(Q,R),\nabla \eta _{r_{o}}(S,R)\right\rangle dv_{g}(R)
\end{equation*}%
\begin{equation*}
-2\int_{M}u(R)\left\langle \nabla H\left( Q,R\right) ,\nabla \left( \Delta
\eta _{r_{o}}\left( S,R\right) \right) \right\rangle \left( Q\right)
dv_{g}(R)+8\int_{M}u(R)\left\langle \nabla H(Q,R),\Delta \left( \nabla \eta
_{r_{o}}\left( S,R\right) \right) \right\rangle dv_{g}(R)
\end{equation*}%
\begin{equation*}
+2\int_{M}u(R)a(R)\left\langle \nabla H(Q,R),\nabla \eta
_{r_{o}}(S,R)\right\rangle dv_{g}(R)+8\int_{M}u(R)\left\langle \nabla
H(Q,R),\Delta \left( \nabla \eta _{r_{o}}\left( S,R\right) \right)
\right\rangle dv_{g}(R)
\end{equation*}%
\begin{equation*}
-2\int_{M}u(R)\left\langle \nabla ^{2}H(Q,R),\nabla ^{2}\eta _{r_{o}}\left(
S,R\right) \right\rangle dv_{g}(R)+2\int_{M}u(R)\left( \left\langle \Delta
\nabla H\left( Q,R\right) ,\nabla \eta _{r_{o}}\left( S,R\right)
\right\rangle \right) dv_{g}(R)
\end{equation*}%
\begin{equation*}
+\frac{1}{V(M)}\int_{M}\eta _{r_{o}}\left( S,R\right) u(R)dv_{g}\left(
R\right) \text{.}
\end{equation*}%
So by letting $S=Q$, we get the desired formula.
\end{proof}

\begin{theorem}
Let $f\in K^{n}\left( U\right) $ and $u\in H_{2,loc}^{2}\left( U\right) $ a
weak solution of the equation 
\begin{equation}
P(u)+fu=0\text{.}  \label{25}
\end{equation}

If $fu\in L_{loc}^{1}\left( U\right) $, then $u$ is locally bounded on $U$.
\end{theorem}

\begin{proof}
Let $I=\left[ 0,1\right] $ and denote by $\chi _{I}$ the characteristic
function of $I$. Let also $0<\delta \leq \delta _{o}\leq \frac{r_{o}}{4}$
where $r_{o}$ is chosen so that $B\left( Q_{o},2r_{o}\right) \subset U$ with 
$Q_{o}\in U$.

First, we have%
\begin{equation*}
\left\vert \nabla _{T}^{i}\eta _{\delta }\left( Q,T\right) \right\vert \leq
c_{i}\left( n\right) \delta ^{-i}\chi _{I}\left( \delta ^{-1}d\left(
T,Q\right) \right) \text{, \ }i=1,...\text{.}4\text{.}
\end{equation*}%
On the other hand if we denote, respectively, by $J_{i}$, $i=1,...,11$, the
terms of the second right hand of the equality (\ref{24}), we obtain%
\begin{equation}
\left\vert J_{1}\right\vert \leq \overline{c}_{1}\left( n\right) \int_{M}%
\frac{\left\vert \left( fu\right) (R)\right\vert \eta _{\delta }\left(
Q,R\right) }{d(Q,R)^{l}}dv_{g}\left( R\right)  \label{26}
\end{equation}%
\begin{equation*}
\leq \overline{c}_{1}\left( n\right) \overset{\_}{J_{1}}
\end{equation*}%
with 
\begin{equation*}
\overset{\_}{J_{1}}=\int_{M}\frac{\left\vert \left( fu\right) (R)\right\vert
\eta _{\delta }\left( Q,R\right) }{d(Q,R)^{l}}dv_{g}\left( R\right)
\end{equation*}%
where $l=(j+1)\left( n-4\right) -jn\left( 1-\frac{1}{p}\right) $, $j\geq 1$,
any integer and $\overline{c}_{1}\left( l\right) $ is a constant depending
on $l$.

Letting $S\in B_{r_{o}}\left( Q_{o}\right) $, multiplying $\overset{\_}{J_{1}%
}$ by $\frac{\left\vert f\left( Q\right) \right\vert \eta _{\delta }\left(
S,Q\right) }{d\left( S,Q\right) ^{l}}$ and integrating over $M$, we get by
the Fubini's formula%
\begin{equation*}
\overline{J}_{1}=\int_{M}\frac{\left\vert f\left( Q\right) \right\vert \eta
_{\delta }\left( S,Q\right) }{d\left( S,Q\right) ^{l}}\left( \int_{M}\frac{%
\left\vert \left( fu\right) (R)\right\vert \eta _{\delta }\left( Q,R\right) 
}{d(Q,R)^{l}}dv_{g}\left( R\right) \right) dv_{g}\left( Q\right)
\end{equation*}%
\begin{equation*}
=\int_{M}\left\vert \left( fu\right) (R)\right\vert \left( \int_{M}\frac{%
\left\vert f\left( Q\right) \right\vert \eta _{\delta }\left( S,Q\right)
\eta _{\delta }\left( Q,R\right) }{d\left( S,Q\right) ^{l}d(Q,R)^{l}}%
dv_{g}\left( Q\right) \right) dv_{g}\left( R\right)
\end{equation*}%
and taking account of Lemma \ref{lem1}, we get%
\begin{equation*}
\left\vert \overline{J}_{1}\right\vert \leq c_{1}(n)\varphi _{f}\left(
\delta ,B_{3\delta }\left( S\right) \right) \int_{M}\left\vert \left(
fu\right) (R)\right\vert d\left( S,R\right) ^{-l}\eta _{4\delta }\left(
S,R\right) dv_{g}\left( R\right)
\end{equation*}%
\begin{equation*}
\leq c_{1}(n)\varphi _{f}\left( \delta ,B_{3\delta }\left( S\right) \right)
\int_{M}\left\vert \left( fu\right) (R)\right\vert d\left( S,R\right)
^{-l}\left( \eta _{4\delta }\left( S,R\right) -\eta _{\delta }\left(
S,R\right) \right) dv_{g}\left( R\right)
\end{equation*}%
\begin{equation*}
+c_{1}(n)\varphi _{f}\left( \delta ,B_{3\delta }\left( S\right) \right)
\int_{M}\left\vert \left( fu\right) (R)\right\vert d\left( S,R\right)
^{-l}\eta _{\delta }\left( S,R\right) dv_{R}\left( R\right)
\end{equation*}%
taking account of%
\begin{equation*}
\eta _{4\delta }\left( S,R\right) -\eta _{\delta }\left( S,R\right) =0\text{
, for }d\left( S,R\right) \leq \frac{\delta }{2}\text{ or }d\left(
S,R\right) \geq 4\delta
\end{equation*}%
we obtain%
\begin{equation*}
\left\vert \overset{\_}{J}_{1}\right\vert \leq 2c_{1}(n)\varphi _{f}\left(
\delta ,B_{3\delta }\left( S\right) \right) \left( \frac{\delta }{2}\right)
^{-l}\left\Vert fu\right\Vert _{L^{1}\left( B_{4\delta +r_{o}}\left(
Q_{o}\right) \right) }+c_{1}(n)\varphi _{f}\left( \delta ,B_{3\delta }\left(
S\right) \right) \overline{J}_{1}
\end{equation*}%
and since $\varphi _{f}\left( \delta ,B_{3\delta }\left( S\right) \right)
\rightarrow 0^{+}$ as $\delta \rightarrow 0^{+}$, we choose $\delta >0$ such
that 
\begin{equation*}
1-c_{1}(n)\varphi _{f}\left( \delta ,B_{3\delta }\left( S\right) \right) >0%
\text{.}
\end{equation*}

Hence%
\begin{equation*}
\left\vert \overline{J}_{1}\right\vert <+\infty
\end{equation*}%
and 
\begin{equation*}
\left\vert J_{1}\right\vert <+\infty
\end{equation*}%
By Lemma \ref{lem6}, we get%
\begin{equation*}
\left\vert J_{2}\right\vert \leq C_{1}\int_{M}\frac{\left\vert u(R)\left\{
\Delta ^{2}\eta _{\delta }\left( Q,R\right) \right\} \right\vert }{d(Q,R)^{l}%
}dv_{g}\left( R\right)
\end{equation*}%
\begin{equation*}
+C_{2}\int_{M}\frac{\left\vert \left( ua\right) (R)\right\vert \left\vert
\nabla _{R}\eta _{\delta }\left( Q,R\right) \right\vert }{d(Q,R)^{l-1}}%
dv_{g}\left( R\right) +C_{o}\int_{M}\frac{\left\vert u(R)\right\vert
\left\vert \nabla a(R)\right\vert \left\vert \nabla \eta _{r_{o}}\left(
S,R\right) \right\vert }{d(Q,R)^{l-2}}dv_{g}\left( R\right)
\end{equation*}%
and by Lemma \ref{lem1}, we obtain%
\begin{equation*}
\left\vert J_{2}\right\vert \leq c_{2}\left( n\right) \delta ^{-4}\left( 
\frac{\delta }{2}\right) ^{-l}\left\Vert u\right\Vert _{L^{1}\left(
B_{2\delta +r_{o}}\right) }+c_{1}\left( n\right) \delta ^{-1}\left( \frac{%
\delta }{2}\right) ^{-l-1}\left\Vert a\right\Vert _{p}\left\Vert
u\right\Vert _{L^{\frac{p}{p-1}}\left( B_{2\delta +r_{o}}\right) }
\end{equation*}%
\begin{equation*}
+c_{o}(n)\delta ^{-1}\left( \frac{\delta }{2}\right) ^{-l}\left\Vert
a\right\Vert _{p}\left\Vert \nabla u\right\Vert _{L^{\frac{p}{p-1}}\left(
B_{2\delta +r_{o}}\right) }\text{.}
\end{equation*}%
Also%
\begin{equation*}
\left\vert J_{3}\right\vert \leq \left( 2^{3}c_{2}\left( n\right) \left( 
\frac{\delta }{2}\right) ^{-l}+2^{5}c_{3}\left( n\right) \left( \frac{\delta 
}{2}\right) ^{-l-2}+2^{5}c_{4}\left( n\right) \left( \frac{\delta }{2}%
\right) ^{-l-4}\right) \left\Vert u\right\Vert _{L^{1}\left( B_{2\delta
+r_{o}}\right) }\text{.}
\end{equation*}%
By the same procedure as above and applying repeatedly Lemma \ref{lem1}, we
get that all the remaining terms of the formula (\ref{24}) are bounded and
the solution $u$ of the equation (\ref{25}) is locally bounded.
\end{proof}

\section{$Q$-curvature type equation}

Let $\left( M,g\right) $ be a compact $n$-dimensional Riemannian manifold, $%
n\geq 5$, we consider the following fourth order equation%
\begin{equation}
\Delta ^{2}u-\nabla ^{i}\left( a(x)\nabla _{i}u\right) +b(x)u=f\left\vert
u\right\vert ^{N-2}u  \label{27}
\end{equation}%
where $a\in L^{s}(M)$, $b\in L^{p}(M)$, with $s>\frac{n}{2}$, $p>\frac{n}{4}$%
, $f$ $\in C^{\infty }(M)$ a positive function and $N=\frac{2n}{n-4}$. To
solve the equation (\ref{27}), we use the variational method.

For any $u\in H_{2}(M)$, we let 
\begin{equation*}
J(u)=\int_{M}\left( \Delta u\right) ^{2}dv_{g}+\int_{M}a(x)\left\vert \nabla
u\right\vert ^{2}dv_{g}+\int_{M}b(x)u^{2}dv_{g}
\end{equation*}%
be the energy functional and consider the Sobolev quotient, for any $u\in
H_{2}(M)-\left\{ 0\right\} $%
\begin{equation*}
Q(u)=\frac{J(u)}{\left( \int_{M}f\left\vert u\right\vert ^{N}dv_{g}\right) ^{%
\frac{2}{N}}}\text{.}
\end{equation*}%
\begin{equation*}
A=\left\{ u\in H_{2}(M):\int_{M}f\left\vert u\right\vert ^{N}dv_{g}=\left(
1+\left\Vert a\right\Vert _{s}+\left\Vert b\right\Vert _{p}\right) ^{\frac{N%
}{2}}\right\}
\end{equation*}%
obviously $A\neq \phi $.

Put 
\begin{equation*}
Q(M)=\inf_{u\in H_{2}(M)-\left\{ 0\right\} }Q(u)=\inf_{u\in A}J(u)\text{.}
\end{equation*}

\begin{theorem}
\label{th1} Suppose that $Q(M)<\left( \sup_{x\in M}f(x)\right) ^{-\frac{2}{N}%
}K(n,2)^{-2}\left( 1+\left\Vert a\right\Vert _{s}+\left\Vert b\right\Vert
_{p}\right) $. The equation (\ref{27}) has a non trivial weak solution $u\in
H_{2}$ satisfying $J(u)=Q(M)$ and $u\in A$.
\end{theorem}

Before starting the proof of Theorem \ref{1}, we state the following lemma
which controls the $L^{q}$-norm of the gradient by the $L^{2}$-norm of the
laplacian.

\begin{lemma}
\label{lem7} Let $(M,g)$ be a compact Riemannian $n$-dimensional manifold ($%
n\geq 3$). Then for any $\epsilon >0$, there exists $C\left( \varepsilon
\right) >0$ such that 
\begin{equation}
\left\Vert \nabla u\right\Vert _{q}\leq \epsilon \left\Vert \Delta
u\right\Vert _{2}+C\left( \epsilon ,q\right) \left\Vert u\right\Vert _{q}
\label{33}
\end{equation}%
where $2\leq q<\frac{2n}{n-2}$.
\end{lemma}

\begin{proof}
The proof of this lemma is similar to that of lemma 2.2 page 16 in \cite{20}
which corresponds for the particular case $q=2$. For convenience we give the
proof. We proceed by contradiction. Let $\epsilon _{o}>0$ and let $\left(
u_{i}\right) _{i\geq 1}$ be a sequence in $H_{2}^{2}(M)$ such that 
\begin{equation}
\left\Vert \nabla u_{i}\right\Vert _{q}>\epsilon _{o}\left\Vert \Delta
u_{i}\right\Vert _{2}+i\left\Vert u_{i}\right\Vert _{q}\text{ and }%
\left\Vert \nabla u_{i}\right\Vert _{q}=1\text{.}  \label{28'}
\end{equation}%
Then%
\begin{equation*}
\left\Vert \Delta u_{i}\right\Vert _{2}+\left\Vert \nabla u_{i}\right\Vert
_{2}+\left\Vert u_{i}\right\Vert _{2}\leq \epsilon _{o}^{-1}+1+i^{-1}
\end{equation*}%
for any $i\geq 1$, so $\left( u_{i}\right) _{i}$ is bounded in $%
H_{2}^{2}\left( M\right) $. Thanks to the compactness of the embedding $%
H_{2}^{2}\left( M\right) \subset H_{1}^{q}\left( M\right) $ ($q<\frac{2n}{n-2%
}$), up to a subsequence $\left( u_{i}\right) $ converges strongly to $u$ in 
$H_{1}^{q}(M)$. By inequality (\ref{28'}) we infer that $\left\Vert \nabla
u\right\Vert _{q}=1$ and $\left\Vert u\right\Vert =0$: a contradiction.
\end{proof}

Now we are in position to prove Theorem \ref{th1}.

\begin{proof}
First we show that $Q(M)$ is finite. For any $u\in A$,

\begin{equation}
J(u)\geq \left\Vert \Delta u\right\Vert _{2}^{2}-\left\Vert a\right\Vert
_{s}\left\Vert \nabla u\right\Vert _{2s^{\prime }}^{s^{\prime }}-\left\Vert
b\right\Vert _{p}\left\Vert u\right\Vert _{\frac{2p}{p-1}}^{2}  \label{28}
\end{equation}%
with 
\begin{equation*}
s^{\prime }=\frac{s}{s-1}\text{.}
\end{equation*}%
Since $s>\frac{n}{2}$ then \ $2<2s^{\prime }<\frac{2n}{n-2}$ and by Lemma %
\ref{lem7} we get that for every $\eta >0$ there is a constant $C=C(\eta ,s)$
such that 
\begin{equation*}
\left\Vert \nabla u\right\Vert _{2s^{\prime }}^{2}\leq \eta \left\Vert
\Delta u\right\Vert _{2}^{2}+C\left\Vert u\right\Vert _{2s^{\prime }}^{2}
\end{equation*}%
and by H\"{o}lder's inequality we obtain%
\begin{equation*}
\left\Vert u\right\Vert _{2s^{\prime }}^{2}\leq \left\Vert u\right\Vert
_{N}^{2}V(M)^{1-\frac{2s^{\prime }}{N}}\leq \max \left( 1,V(M)\right)
\left\Vert u\right\Vert _{N}^{2}
\end{equation*}%
we obtain%
\begin{equation}
\left\Vert \nabla u\right\Vert _{2s^{\prime }}^{2}\leq \eta \left\Vert
\Delta u\right\Vert _{2}^{2}+C\max \left( 1,V(M)\right) \left\Vert
u\right\Vert _{N}^{2}  \label{29}
\end{equation}%
where $N=\frac{2n}{n-4}$and also%
\begin{equation*}
\left\Vert u\right\Vert _{\frac{2p}{p-1}}^{2}\leq \max \left( 1,V(M)\right)
\left\Vert u\right\Vert _{N}^{2}\text{.}
\end{equation*}%
Hence%
\begin{equation}
J(u)\geq \left( 1-\eta \left\Vert a\right\Vert _{s}\right) \left\Vert \Delta
u\right\Vert _{2}^{2}-\left( C\left\Vert a\right\Vert _{s}+\left\Vert
b\right\Vert _{p}\right) \max \left( 1,V(M)\right) \left\Vert u\right\Vert
_{N}^{2}  \label{30}
\end{equation}%
and taking account of the constraint 
\begin{equation*}
\int_{M}f\left\vert u\right\vert ^{N}dv_{g}=\left( 1+\left\Vert a\right\Vert
_{s}+\left\Vert b\right\Vert _{p}\right) ^{\frac{N}{2}}
\end{equation*}%
we get%
\begin{equation}
\left\Vert u\right\Vert _{N}^{2}\leq \left( 1+\left\Vert a\right\Vert
_{s}+\left\Vert b\right\Vert _{p}\right) \min_{x\in M}f(x)^{-\frac{2}{N}}%
\text{.}  \label{31}
\end{equation}%
Now letting $\eta $ sufficiently small in ( \ref{30} ) we obtain that

\begin{equation*}
J(u)\geq -\max \left( 1,V(M)\right) \left( C\left\Vert a\right\Vert
_{s}+\left\Vert b\right\Vert _{p}\right) \left( 1+\left\Vert a\right\Vert
_{s}+\left\Vert b\right\Vert _{p}\right) \left( \min_{x\in M}f(x)\right) ^{-%
\frac{2}{N}}>-\infty \text{.}
\end{equation*}%
Hence $Q(M)=\inf_{u\in A}J(u)$ is finite.

Let $\left( u_{i}\right) _{i}\subset A$ be a minimizing sequence of the
functional $J$ i.e.

\begin{equation*}
J(u_{i})=Q(M)+o(1)\text{.}
\end{equation*}%
So for sufficiently large $i$ 
\begin{equation}
J(u_{i})\leq Q(M)+1  \label{32}
\end{equation}%
By (\ref{30}),(\ref{31}) and (\ref{32}), we obtain for sufficiently large $i$%
\begin{equation*}
\left\Vert \Delta u_{i}\right\Vert _{2}<+\infty
\end{equation*}%
and by (\ref{29}), (\ref{31}) and H\"{o}lder's inequality we infer that%
\begin{equation*}
\left\Vert u_{i}\right\Vert _{H_{2}(M)}<+\infty \text{.}
\end{equation*}%
Up to a subsequence, there is $u\in H_{2}(M)$ such that

$\cdot $\ \ \ $u_{i}\rightarrow u$ \ \ \ \ \ \ weakly in $H_{2}(M)$

$\cdot $\ $\ \ \nabla u_{i}\rightarrow \nabla u$ \ \ strongly in $L^{s}(M)$, 
$s<2^{\ast }=\frac{2n}{n-2}$

$\cdot $ $\ \ \ u_{i}\rightarrow u$ \ \ \ \ \ \ \ strongly in $L^{r}(M)$, $\
r<N$

$\cdot $ $\ \ \ u_{i}\rightarrow u$ \ \ \ \ \ \ \ \ a.e. in $M$.

Letting $v_{i}=u_{i}-u$, we conclude that

$\int_{M}\Delta u\Delta v_{i}dv_{g}\rightarrow 0$, $\int_{M}a\left\langle
\nabla u,\nabla v_{i}\right\rangle dv_{g}\rightarrow 0$ as $i\rightarrow
+\infty $.

and%
\begin{equation*}
\int_{M}\left\vert buv_{i}\right\vert dv_{g}\leq \left\Vert b\right\Vert
_{p}\left( \int_{M}\left\vert u\right\vert ^{\frac{p}{p-1}}\left\vert
v_{i}\right\vert ^{\frac{p}{p-1}}dv_{g}\right) ^{1-\frac{1}{p}}\leq
\left\Vert b\right\Vert _{p}\left\Vert u\right\Vert _{\frac{2p}{p-1}%
}\left\Vert v_{i}\right\Vert _{\frac{2p}{p-1}}
\end{equation*}%
i.e. $\int_{M}buv_{i}dv_{g}\rightarrow 0$, since $\frac{2p}{p-1}<N$.

Consequently%
\begin{equation*}
J(u_{i})=J(u)+J(v_{i})+2\int_{M}\Delta u\Delta
v_{i}dv_{g}+2\int_{M}a\left\langle \nabla u,\nabla v_{i}\right\rangle
dv_{g}+2\int_{M}buv_{i}dv_{g}
\end{equation*}%
\begin{equation*}
=J(u)+J(v_{i})+o(1)
\end{equation*}%
\begin{equation*}
=J(u)+\left\Vert \Delta v_{i}\right\Vert _{2}^{2}+o(1)\text{.}
\end{equation*}

By definition of $Q(M)$, $J(u)\geq Q(M)\left( \int_{M}f\left\vert
u\right\vert ^{N}dv_{g}\right) ^{\frac{2}{N}}$ and $J(u_{i})=Q(M)+o(1)$ and
by definition of the sequence $\left( u_{i}\right) $, we obtain 
\begin{equation}
Q(M)\left( \int_{M}f\left\vert u\right\vert ^{N}dv_{g}\right) ^{\frac{2}{N}%
}+\left\Vert \Delta v_{i}\right\Vert _{2}^{2}\leq Q(M)+o(1)\text{.}
\label{37}
\end{equation}%
Brezis -Lieb lemma allows us to write

\begin{equation*}
\left( 1+\left\Vert a\right\Vert _{s}+\left\Vert b\right\Vert _{p}\right) ^{%
\frac{N}{2}}=\int_{M}f\left\vert u_{i}\right\vert
^{N}dv_{g}=\int_{M}f\left\vert u\right\vert ^{N}dv_{g}+\int_{M}f\left\vert
v_{i}\right\vert ^{N}dv_{g}+o(1)
\end{equation*}%
hence%
\begin{equation*}
1+\left\Vert a\right\Vert _{s}+\left\Vert b\right\Vert _{p}\leq \left(
\int_{M}f\left\vert u\right\vert ^{N}dv_{g}\right) ^{\frac{2}{N}}+\left(
\int_{M}f\left\vert v_{i}\right\vert ^{N}dv_{g}\right) ^{\frac{2}{N}}+o(1)
\end{equation*}%
and the inequality (\ref{37}) will be written as%
\begin{equation}
\left( 1+\left\Vert a\right\Vert _{s}+\left\Vert b\right\Vert _{p}\right)
\left\Vert \Delta v_{i}\right\Vert _{2}^{2}\leq Q(M)\left(
\int_{M}f\left\vert v_{i}\right\vert ^{N}dv_{g}\right) ^{\frac{2}{N}}+o(1)%
\text{.}  \label{38}
\end{equation}%
By Sobolev's inequality we infer that%
\begin{equation*}
\left( 1+\left\Vert a\right\Vert _{s}+\left\Vert b\right\Vert _{p}\right)
\left\Vert \Delta v_{i}\right\Vert _{2}^{2}\leq Q(M)\left( \sup_{x\in
M}f(x)\right) ^{\frac{2}{N}}\left( K(n,2)^{2}+\varepsilon \right) \left\Vert
\Delta v_{i}\right\Vert _{2}^{2}+o(1)\text{.}
\end{equation*}%
Finally%
\begin{equation*}
\left( 1+\left\Vert a\right\Vert _{s}+\left\Vert b\right\Vert
_{p}-Q(M)\left( \sup_{x\in M}f(x)\right) ^{\frac{2}{N}}\left(
K(n,2)^{2}+\varepsilon \right) \right) \left\Vert \Delta v_{i}\right\Vert
_{2}^{2}\leq o(1)\text{.}
\end{equation*}%
If we let 
\begin{equation*}
Q(M)<\left( 1+\left\Vert a\right\Vert _{s}+\left\Vert b\right\Vert
_{p}\right) \left( \sup_{x\in M}f(x)\right) ^{-\frac{2}{N}}K(n,2)^{-2}
\end{equation*}%
and choosing $\varepsilon >0$ small enough such that 
\begin{equation*}
1+\left\Vert a\right\Vert _{s}+\left\Vert b\right\Vert _{p}-Q(M)\left(
\sup_{x\in M}f(x)\right) ^{\frac{2}{N}}\left( K(n,2)^{2}+\varepsilon \right)
>0
\end{equation*}%
we obtain 
\begin{equation*}
\left\Vert \Delta v_{i}\right\Vert _{2}^{2}=o(1)\text{.}
\end{equation*}%
Hence $\left( v_{i}\right) $ converges strongly to $0$ in $H_{2}(M)$ and $%
(u_{i})$ converges strongly to $u$ in $H_{2}(M)$ and in $L^{N}(M)$. We
conclude that $u\in A$ is a non trivial solution of the equation%
\begin{equation}
\Delta ^{2}u-\nabla ^{\mu }(a\nabla _{\mu }u)+bu=f\left\vert u\right\vert
^{N-2}u\text{.}  \label{39}
\end{equation}
\end{proof}

Now, we are going to establish the regularity of the solution to the
equation (\ref{39}). To do so we quote after F. Robert \cite{20}, the
following regularity theorems.

\begin{theorem}
Let $\left( M,g\right) $ be a compact Riemannian manifold of dimension $%
n\geq 1$. Let $p\geq 1$ and let $0\leq m<k$ two integers such that $%
n>p\left( k-m\right) $. Then $H_{k}^{p}(M)$ is embedded in $H_{m}^{q}(M)$,
where $\frac{1}{q}=\frac{1}{p}-\frac{k-m}{n}$.
\end{theorem}

\begin{theorem}
\label{th5} Let $\left( M,g\right) $ be a compact Riemannian manifold of
dimension $n\geq 1$. Let $p\geq 1$ and $k\geq 1$ an integer such that $kp>n$%
. Then $H_{k}^{p}\left( M\right) $ is embedded in $C^{0,\beta }(M)$ for all $%
\beta \in \left( 0,1\right) $ such that $\beta <k-\frac{n}{p}$.
\end{theorem}

The regularity theorem states as follows

\begin{theorem}
\label{th6} In addition to the assumption of Theorem \ref{th1} we suppose
that the function $a\in H_{1}^{s}\left( M\right) $ with $s>\frac{n}{2}$.
Then the solution $u$ of the equation $\Delta ^{2}u-\nabla ^{\mu }(a\nabla
_{\mu }u)+bu=f\left\vert u\right\vert ^{N-2}u$ is in $C^{0,\beta }(M)$ for
all $\beta \in \left( 0,1-\frac{n}{4p}\right) $ with $p>\frac{n}{4}$.
\end{theorem}

\begin{proof}
We adapt some ideas from Madani's paper in case of Yamabe type equation \cite%
{18}. First, we show that the function $h=-f\left\vert u\right\vert ^{N-2}$
is a Kato- Stummel's function. Using H\"{o}lder's inequality, we get%
\begin{equation*}
\sup_{Q\in M}\int_{B_{t}\left( Q\right) }\frac{\left\vert h(S)\right\vert
\chi _{U}\left( S\right) }{d(Q,S)^{l}}dv_{g}(S)\leq \int_{M}\frac{\left\vert
f(S)\right\vert \left\vert u(S)\right\vert ^{N-2}}{d(Q,S)^{l}}dv_{g}(S)
\end{equation*}%
\begin{equation*}
\leq \max_{S\in M}\left\vert f(S)\right\vert \left\Vert u\right\Vert
_{N-2,\rho ^{-l}}^{N-2}\text{.}
\end{equation*}%
By Lemma \ref{lem3}, we infer that 
\begin{equation*}
\left\Vert u\right\Vert _{N-2,\rho ^{-l}}^{N-2}\leq C\left( \left\Vert
\Delta u\right\Vert _{2}^{2}+\left\Vert u\right\Vert _{2}^{2}\right)
\end{equation*}%
where $C>0$ is some constant.

The remaining part to check that the function $h$ is a Kato-Stummel is the
same as in \cite{18} so we omit it.

We conclude that the solution $u$ of the equation (\ref{39}) is locally
bounded and in fact bounded since the manifold $M$ is compact.

Writing 
\begin{equation}
\left( \Delta +1\right) ^{2}u=div(\left( a-2\right) \bigtriangledown
u)+\left( 1-b\right) u+f\left\vert u\right\vert ^{N-2}u\text{.}  \label{39'}
\end{equation}%
Put for brevity $\widetilde{f}=f\left\vert u\right\vert ^{N-2}u$, $q=\left(
1-b\right) u$, $h=div(\left( a-2\right) \bigtriangledown u)$. Since $u$ is
bounded and $b\in L^{p}\left( M\right) $, $p>\frac{n}{4}$ it follows that $%
\widetilde{f}$ is bounded and $q\in L^{p}\left( M\right) $. Easy
computations using H\"{o}lder's inequality show that $h\in L^{1}\left(
M\right) $. From equation (\ref{39'}) we deduce that 
\begin{equation*}
u=\left( \Delta +1\right) ^{-2}\left[ h+\left( 1-b\right) u+f\left\vert
u\right\vert ^{N-2}u\right] \in H_{4}^{1}\left( M\right) \text{.}
\end{equation*}%
Hence $\bigtriangledown ^{i}u\in L^{\frac{2n}{n+2i-4}}$, $i=0,1$. Let $%
1<q_{1}<\frac{n}{n-2}$, using the H\"{o}lder's inequality, we obtain%
\begin{equation*}
\int_{M}\left\vert div\left( a\bigtriangledown u\right) \right\vert
^{q_{1}}dv_{g}\leq C_{q_{1}}\int_{M}\left( \left\vert \bigtriangledown
a\right\vert ^{q_{1}}\left\vert \bigtriangledown u\right\vert
^{q_{1}}+\left\vert a\right\vert ^{q_{1}}\left\vert \bigtriangledown
^{2}u\right\vert ^{q_{1}}\right) dv_{g}
\end{equation*}%
\begin{equation*}
\leq C_{q_{1}}\left( \left\Vert \bigtriangledown a\right\Vert _{\frac{2nq_{1}%
}{n-\left( n-2\right) q_{1}}}^{q_{1}}\left\Vert \bigtriangledown
u\right\Vert _{\frac{2n}{n-2}}^{q_{1}}+\left\Vert a\right\Vert _{\frac{2q_{1}%
}{2-q_{1}}}^{1-\frac{1}{q_{1}}}\left\Vert \bigtriangledown ^{2}u\right\Vert
_{2}^{q_{1}}\right)
\end{equation*}%
with $C_{q_{1}}>0$, is a constant. Since it is easy to see that%
\begin{equation*}
\frac{2nq_{1}}{n-\left( n-2\right) q_{1}}\leq \frac{n}{2}\text{ \ \ and }%
\frac{2q_{1}}{2-q_{1}}<\frac{ns}{n-s}\text{ for }s>\frac{n}{2}
\end{equation*}%
it follows that 
\begin{equation*}
div\left( a\bigtriangledown u\right) \in L^{q_{1}}\left( M\right) \text{.}
\end{equation*}%
Consequently, from equation (\ref{39'}) we get that 
\begin{equation*}
\bigtriangledown ^{i}u\in L^{\frac{nq_{1}}{n-\left( 4-i\right) q_{1}}}\text{%
, }i=0,1,2\text{.}
\end{equation*}%
Let $\frac{n}{n-2}<q_{2}<\frac{nq_{1}}{n-q_{1}}$, using again the H\"{o}%
lder's inequality, we get%
\begin{equation*}
\int_{M}\left\vert div\left( a\bigtriangledown u\right) \right\vert
^{q_{2}}dv_{g}\leq
\end{equation*}%
\begin{equation*}
\leq C_{q_{2}}\left( \left\Vert \bigtriangledown a\right\Vert _{\frac{%
nq_{1}q_{2}}{nq_{1}-\left( n-3q_{1}\right) q_{2}}}\left\Vert
\bigtriangledown u\right\Vert _{\frac{nq_{1}}{n-3q_{1}}}^{q_{2}}+\left\Vert
a\right\Vert _{\frac{nq_{1}q_{2}}{nq_{1}-\left( n-2q_{1}\right) q_{2}}%
}\left\Vert \bigtriangledown ^{2}u\right\Vert _{\frac{nq_{1}}{n-2q_{1}}%
}^{q_{2}}\right) \text{.}
\end{equation*}%
Since$\frac{nq_{1}q_{2}}{nq_{1}-\left( n-3q_{1}\right) q_{2}}<\frac{n}{2}$
and $\frac{nq_{1}q_{2}}{nq_{1}-\left( n-2q_{1}\right) q_{2}}<\frac{ns}{n-s}$
for $s>\frac{n}{2}$, we infer that 
\begin{equation*}
div\left( a\bigtriangledown u\right) \in L^{q_{2}}\left( M\right)
\end{equation*}%
consequently%
\begin{equation*}
\bigtriangledown ^{i}u\in L^{\frac{nq_{2}}{n-\left( 4-i\right) q_{2}}}\text{%
, }i=0,1,2\text{.}
\end{equation*}%
Recurrently, we obtain an increasing sequence $\left( q_{i}\right) $ such
that $div\left( a\bigtriangledown u\right) \in L^{q_{i}}\left( M\right) $.
If there exists a $q_{i_{o}}$ such that $q_{i_{o}}>\frac{n}{4}$, \ then the
right hand side of (\ref{39'}) belongs to $L^{p}\left( M\right) $, with $p>%
\frac{n}{4}$ and by Theorem\ref{th5}, $u\in C^{0,\beta }$, for $\beta \in
\left( 0,1\right) $ such that $\beta <1-\frac{n}{4p}$. The case the sequence 
$\left( q_{i}\right) $ is bounded by $\frac{n}{4}$, denote by $q_{o}$ the
limit of $q_{i}$ and let $q_{o}<q<\frac{nq_{o}}{n-q_{o}}$, the same
arguments as above show that $div\left( a\bigtriangledown u\right) \in
L^{q}\left( M\right) $ which contradicts the fact that the sequence $\left(
q_{i}\right) $ is bounded by $\frac{n}{4}$.
\end{proof}

\begin{remark}
\label{rem1} The assumption $a\in H_{1}^{s}\left( M\right) $ in Theorem \ref%
{th6} is not restrictive: since if $s$ is a real number such that $0<\gamma <%
\frac{n}{s}-1<1$. Let $\rho $ be the function defined in the introduction by
(\ref{1}), the singular function on $M$ given by 
\begin{equation*}
a(x)=\frac{\widetilde{a}(x)}{\rho ^{\gamma }}
\end{equation*}%
where $\widetilde{a}$ is a smooth function on $M$ is such that $%
\bigtriangledown a\in H_{1}^{s}\left( M\right) $.
\end{remark}

Let $\alpha $,$\gamma $ be real numbers which will be precise later and
consider the equation in the distribution sense%
\begin{equation}
\Delta ^{2}u-\nabla ^{\mu }(\frac{a}{\rho ^{\gamma }}\nabla _{\mu }u)+\frac{%
bu}{\rho ^{\alpha }}=f\left\vert u\right\vert ^{N-2}u  \label{40}
\end{equation}%
where $a$ and $b$ are smooth functions on $M$.

We put, for any $u\in H_{2}(M)$%
\begin{equation*}
J_{\gamma ,\alpha }(u)=\left\Vert \Delta u\right\Vert _{2}^{2}+\int_{M}\frac{%
a}{\rho ^{\gamma }}\left\vert \nabla u\right\vert ^{2}dv_{g}+\int_{M}\frac{%
bu^{2}}{\rho ^{\alpha }}dv_{g}
\end{equation*}%
\begin{equation*}
Q_{\gamma ,\alpha }(u)=\frac{J_{\gamma ,\alpha }(u)}{\left(
\int_{M}f\left\vert u\right\vert ^{N}dv_{g}\right) ^{\frac{2}{N}}}
\end{equation*}%
and%
\begin{equation*}
Q_{\gamma ,\alpha }(M)=\inf_{u\in H_{2}(M)-\left\{ 0\right\} }Q_{\gamma
,\alpha }(u)=\inf_{u\in A}J_{\alpha }(u)
\end{equation*}%
where $A=\left\{ u\in H_{2}(M):\int_{M}f\left\vert u\right\vert
^{N}dv_{g}=\left( 1+\left\Vert \frac{a}{\rho ^{\gamma }}\right\Vert
_{s}+\left\Vert \frac{b}{\rho ^{\alpha }}\right\Vert _{p}\right) ^{\frac{N}{2%
}}\right\} $.

As a corollary to Theorem \ref{th1}, we have

\begin{theorem}
\label{th2} Let $\gamma $, $\alpha $ real numbers such that $0<\gamma <\frac{%
n}{s}<2$ \ and $0<\alpha <\frac{n}{p}<4$, if $Q_{\gamma ,\alpha }(M)<\left(
\sup f(x)\right) ^{-\frac{N}{2}}K(n,2)^{-2}\left( 1+\left\Vert \frac{a}{\rho
^{\gamma }}\right\Vert _{s}+\left\Vert \frac{b}{\rho ^{\alpha }}\right\Vert
_{p}\right) $, then the equation (\ref{40}) has a non trivial weak solution $%
u_{\gamma ,\alpha }\in A$ which fulfills $J_{\gamma ,\alpha }(u)=Q_{\gamma
,\alpha }(M)$. Moreover if \ $0<\gamma <\frac{n}{s}-1<1$, then the solution $%
u_{\gamma ,\alpha }$ is\ of class $C^{0,\beta }(M)$ \ with $\beta \in \left(
0,1-\frac{n}{4p}\right) $.
\end{theorem}

\begin{proof}
Let $\overline{a}=\frac{a}{\rho ^{\gamma }}$, $\overline{b}=\frac{b}{\rho
^{\alpha }}$, if $0<\gamma <\frac{n}{s}$, $0<\alpha <\frac{n}{p}<4$ then, $%
\overline{a}\in L^{s}(M),\overline{b}\in L^{p}(M)$ and the first part of
Theorem \ref{th2} follows from Theorem \ref{th1} . If $0<\gamma <\frac{n}{s}%
-1<1$, then $\overline{a}\in H_{1}^{s}(M)$ and as a Corollary of Theorem \ref%
{th6} \ we \ get the last part of Theorem \ref{th2}.
\end{proof}

\section{The sharp case $\protect\gamma =2$, $\protect\alpha =4$}

In the previous section we have shown that if $0<\gamma <\frac{n}{s}<2$, $%
0<\alpha <\frac{n}{p}<4$ and $Q_{\gamma ,\alpha }(M)<\left( \sup_{x\in
M}f(x)\right) ^{-\frac{2}{N}}K(n,2)^{-2}\left( 1+\left\Vert \frac{a}{\rho
^{\gamma }}\right\Vert _{s}+\left\Vert \frac{b}{\rho ^{\alpha }}\right\Vert
_{p}\right) $, then the equation in the distribution sense

\begin{equation}
\Delta ^{2}u-\nabla ^{\mu }(\frac{a}{\rho ^{\gamma }}\nabla _{\mu }u)+\frac{%
bu}{\rho ^{\alpha }}=f\left\vert u\right\vert ^{N-2}u  \label{41}
\end{equation}%
where $a$ and $b$ are smooth functions, has a weak solution 
\begin{equation*}
u_{\gamma ,\alpha }\in A=\left\{ u\in H_{2}(M):\int_{M}f\left\vert
u\right\vert ^{N}dv_{g}=\left( 1+\left\Vert \frac{a}{\rho ^{\gamma }}%
\right\Vert _{s}+\left\Vert \frac{b}{\rho ^{\alpha }}\right\Vert _{p}\right)
^{\frac{N}{2}}\right\}
\end{equation*}%
such that $J_{\gamma ,\alpha }(u_{\alpha })=Q_{\gamma ,\alpha }(M)$. If we
restrict ourself to $0<\gamma <\frac{n}{s}-1<1$, $u_{\alpha }\in C^{0,\beta
}\left( M\right) $ with $\beta \in \left( 0,1\right) $.

In this section we will show the following: let $K\left( n,1,-2\right) $ and 
$K(n,2,-4)$ be the best constants in the Hardy inclusion $%
H_{1}^{2}(M)\hookrightarrow L^{\frac{2n}{n-2}}\left( M,\rho ^{-2}\right) $
and $H_{2}^{2}(M)\hookrightarrow L^{N}\left( M,\rho ^{-4}\right) $
respectively and where $N=\frac{2n}{n-4}$.

Denote by $\delta \left( M\right) $ the injectivity radius of the compact
Riemannian manifold $M$ and let $\omega _{n-1}$ be the volume of the $n-1$
dimensional Euclidean unit sphere $S^{n-1}$.

\begin{theorem}
Let $a$, $b$ and $f$ be smooth functions on $M$ with $f$ positive. Suppose
that 
\begin{equation*}
Q_{2,4}(M)K(n,2)^{2}\left( \sup_{x\in M}f\right) ^{\frac{2}{N}}<\left(
1+\left\Vert \frac{a}{\rho ^{\gamma }}\right\Vert _{s}+\left\Vert \frac{b}{%
\rho ^{\alpha }}\right\Vert _{p}\right) \left( 1+b(P)K(n,2,-4)^{2}\right) .
\end{equation*}%
If moreover we have 
\begin{equation*}
1+b(P)K(n,2,-4)^{2}>0,1+a\left( P\right) K\left( n,1,-2\right)
^{2}+b(P)K(n,2,-4)^{2}>0
\end{equation*}

then the equation in the distribution sense%
\begin{equation*}
\Delta ^{2}u-\nabla ^{\mu }(\frac{a}{\rho ^{2}}\nabla _{\mu }u)+\frac{bu}{%
\rho ^{4}}=f\left\vert u\right\vert ^{N-2}u
\end{equation*}%
has a non trivial solution $u_{2,4}\in A$, which fulfilled $%
J_{2,4}(u)=Q_{2,4}(M)$.
\end{theorem}

\begin{proof}
We adapt ideas from Madani's paper \cite{18}. First we show that $Q_{2,4}(M)$
is finite. Since $b$ is continuous, for any $\varepsilon >0$, there is $%
\delta >0$ such that for any $Q\in M$, with $d(P,Q)<\delta $ and $%
b(Q)>b(P)-\varepsilon $, then%
\begin{equation}
\int_{M}\frac{bu^{2}}{\rho ^{4}}dv_{g}\geq \left( b(P)-\varepsilon \right)
\int_{B_{\delta }(P)}\frac{u^{2}}{\rho ^{4}}dv_{g}-\frac{\left\Vert
b\right\Vert _{\infty }}{\delta ^{4}}\int_{M-B_{\delta }\left( P\right)
}u^{2}dv_{g}\text{.}  \label{42}
\end{equation}%
By Lemma \ref{lem3}, we have%
\begin{equation}
\int_{B_{\delta }(P)}\frac{u^{2}}{\rho ^{4}}dv_{g}\leq \left(
K(n,2,-4)^{2}+\varepsilon \right) \left\Vert \Delta u\right\Vert
_{2}^{2}+A(\varepsilon )\left\Vert u\right\Vert _{2}^{2}\text{.}  \label{43}
\end{equation}%
Combining (\ref{42}) and (\ref{43}), we get%
\begin{equation}
\int_{M}\frac{bu^{2}}{\rho ^{4}}dv_{g}\geq \left( \min (b(P),0)-\varepsilon
\right) \left( K(n,2,-4)^{2}+\varepsilon \right) \left\Vert \Delta
u\right\Vert _{2}^{2}  \label{44}
\end{equation}%
\begin{equation*}
+\left( \left( \min (b(P),0)-\varepsilon \right) A(\varepsilon )-\frac{%
\left\Vert b\right\Vert _{\infty }}{\delta ^{4}}\right) \left\Vert
u\right\Vert _{2}^{2}
\end{equation*}%
where $\left\Vert b\right\Vert _{\infty }=\sup_{x\in M}\left\vert
b(x)\right\vert $, and since, 
\begin{equation*}
\left\Vert u\right\Vert _{2}^{2}\leq \left( 1+\left\Vert \frac{a}{\rho
^{\gamma }}\right\Vert _{s}+\left\Vert \frac{b}{\rho ^{\alpha }}\right\Vert
_{p}\right) \left( \min_{x\in M}f(x)\right) ^{-\frac{2}{N}}V(M)^{1-\frac{2}{N%
}}
\end{equation*}%
we infer the following inequality%
\begin{equation}
\int_{M}\frac{bu^{2}}{\rho ^{4}}dv_{g}\geq \left( \min (b(P),0)-\varepsilon
\right) \left( K(n,2,-4)^{2}+\varepsilon \right) \left\Vert \Delta
u\right\Vert _{2}^{2}  \label{45}
\end{equation}%
\begin{equation*}
+\left( \left( \min (b(P),0)-\varepsilon \right) A(\varepsilon )-\frac{%
\left\Vert b\right\Vert _{\infty }}{\delta ^{4}}\right) \left( 1+\left\Vert 
\frac{a}{\rho ^{\gamma }}\right\Vert _{s}+\left\Vert \frac{b}{\rho ^{\alpha }%
}\right\Vert _{p}\right) \left( \min_{x\in M}f(x)\right) ^{-\frac{2}{N}%
}V(M)^{1-\frac{2}{N}}\text{.}
\end{equation*}%
Also, we have 
\begin{equation*}
\int_{M}\frac{a\left\vert \nabla u\right\vert ^{2}}{\rho ^{2}}dv_{g}\geq
\left( \min (a(P),0)-\varepsilon \right) \int_{B_{\delta }(P)}\frac{%
\left\vert \nabla u\right\vert ^{2}}{\rho ^{2}}dv_{g}-\frac{\left\Vert
a\right\Vert _{\infty }}{\delta ^{2}}\int_{M-B_{\delta }\left( P\right)
}\left\vert \nabla u\right\vert ^{2}dv_{g}
\end{equation*}%
and by Hardy's inequality given by Lemma \ref{lem2}%
\begin{equation*}
\int_{M}\frac{a\left\vert \nabla u\right\vert ^{2}}{\rho ^{2}}dv_{g}\geq
\left( \min (a(P),0)-\varepsilon \right) (K(n,1,-2)+\varepsilon )\left\Vert
\nabla ^{2}u\right\Vert _{2}^{2}
\end{equation*}%
\begin{equation*}
+\left( \left( \min (a(P),0)-\varepsilon \right) A(1,\varepsilon )-\frac{%
\left\Vert a\right\Vert _{\infty }}{\delta ^{2}}\right) \left\Vert \nabla
u\right\Vert _{2}^{2}\text{.}
\end{equation*}%
By the Bochner-Lichnerowicz-Weitzenbock formula%
\begin{equation*}
\left\Vert \Delta u\right\Vert _{2}^{2}=\int_{M}\left\vert \nabla
^{2}u\right\vert dv_{g}+\int_{M}Ric_{g}\left( \left( \nabla u\right)
^{\natural },\left( \nabla u\right) ^{\natural }\right) dv_{g}
\end{equation*}%
where $\natural $ is the musical operator, we get%
\begin{equation*}
\int_{M}\frac{a\left\vert \nabla u\right\vert ^{2}}{\rho ^{2}}dv_{g}\geq
\left( \min (a(P),0)-\varepsilon \right) (K(n,1,-2)+\varepsilon )\left\Vert
\Delta u\right\Vert _{2}^{2}
\end{equation*}%
\begin{equation*}
+\left( \left( \min (a(P),0)-\varepsilon \right) A(1,\varepsilon )-\frac{%
\left\Vert a\right\Vert _{\infty }}{\delta ^{2}}+c\right) \left\Vert \nabla
u\right\Vert _{2}^{2}
\end{equation*}%
where $c$ is a constant such%
\begin{equation*}
\left\vert \int_{M}Ric_{g}\left( \left( \nabla u\right) ^{\natural },\left(
\nabla u\right) ^{\natural }\right) dv_{g}\right\vert \leq c\left\Vert
\nabla u\right\Vert _{2}^{2}\text{.}
\end{equation*}%
Now taking again account of relation (\ref{33}), we obtain%
\begin{equation*}
\int_{M}\frac{a\left\vert \nabla u\right\vert ^{2}}{\rho ^{2}}dv_{g}\geq
\end{equation*}%
\begin{equation*}
\left( \left( \min (a(P),0)-\varepsilon \right) K(n,1,-2)^{2}+\varepsilon
)+\left( \left( \min (a(P),0)-\varepsilon \right) A(1,\varepsilon )-\frac{%
\left\Vert a\right\Vert _{\infty }}{\delta ^{2}}+c\right) \frac{\eta }{%
1-\eta \beta }\right) \left\Vert \Delta u\right\Vert _{2}^{2}
\end{equation*}%
\begin{equation*}
+\left( \left( \min (a(P),0)-\varepsilon \right) A(1,\varepsilon )-\frac{%
\left\Vert a\right\Vert _{\infty }}{\delta ^{2}}+c\right) \frac{C\left( \eta
\right) }{1-\eta \beta }\left\Vert u\right\Vert _{2}^{2}\text{.}
\end{equation*}%
As in previous sections, we obtain%
\begin{equation*}
J_{2,4}(u)\geq \left[ 1+\left( \min (a(P),0)-\varepsilon \right)
K(n,1,-2)^{2}+\varepsilon )\right.
\end{equation*}%
\begin{equation*}
\left. +\left( \left( \min (a(P),0)-\varepsilon \right) A(1,\varepsilon )-%
\frac{\left\Vert a\right\Vert _{\infty }}{\delta ^{2}}+c\right) \frac{\eta }{%
1-\eta \beta }+\left( \min (b(P),0)-\varepsilon \right) \left(
K(n,2,-4)^{2}+\varepsilon \right) \right] \left\Vert \Delta u\right\Vert
_{2}^{2}
\end{equation*}%
\begin{equation*}
+\left( \left( \min (a(P),0)-\varepsilon \right) A(1,\varepsilon )-\frac{%
\left\Vert a\right\Vert _{\infty }}{\delta ^{2}}\right) \frac{C\left( \eta
\right) }{1-\eta \beta }\left\Vert u\right\Vert _{N}^{2}V(M)^{1-\frac{2}{N}}
\end{equation*}%
\begin{equation}
+\left( \left( \min (b(P),0)-\varepsilon \right) A(\varepsilon )-\frac{%
\left\Vert b\right\Vert _{\infty }}{\delta ^{4}}\right) \left( 1+\left\Vert 
\frac{a}{\rho ^{\gamma }}\right\Vert _{s}+\left\Vert \frac{b}{\rho ^{\alpha }%
}\right\Vert _{p}\right) \left( \min_{x\in M}f(x)\right) ^{-\frac{2}{N}%
}V(M)^{1-\frac{2}{N}}\text{.}  \label{46}
\end{equation}%
Noting that 
\begin{equation}
\lim_{\gamma \rightarrow 2^{-}}\sup \left\Vert \frac{a}{\rho ^{\gamma }}%
\right\Vert _{s}\leq \left\Vert a\right\Vert _{\infty }\left( \omega
_{n-1}\delta (M)\right) ^{\frac{2}{n}}<\infty  \tag{46'}  \label{46'}
\end{equation}%
where $\delta (M)$ is the injectivity radius and $\omega _{n-1}$ denotes the
volume of the $n-1$ Euclidean unit sphere.

So if 
\begin{equation*}
1+(a(P)K(n,1,-2)^{2}+b(P)K(n,2,-4)^{2}>0
\end{equation*}%
and letting $\varepsilon $ and $\eta $ small enough the inequality (\ref{46}%
) becomes%
\begin{equation*}
J_{2,4}(u)\geq \left( \left( \min (a(P),0)-\varepsilon \right)
A_{1}(\varepsilon )-\frac{\left\Vert a\right\Vert _{\infty }}{\delta ^{2}}%
\right) \frac{C\left( \eta \right) }{1-\eta \beta }\left( \min_{x\in
M}f(x)\right) ^{-\frac{2}{N}}V(M)^{1-\frac{2}{N}}
\end{equation*}%
\begin{equation*}
+\left( \left( \min (b(P),0)-\varepsilon \right) A(\varepsilon )-\frac{%
\left\Vert b\right\Vert _{\infty }}{\delta ^{4}}\right) \left( 1+\left\Vert 
\frac{a}{\rho ^{\gamma }}\right\Vert _{s}+\left\Vert \frac{b}{\rho ^{\alpha }%
}\right\Vert _{p}\right) \left( \min_{x\in M}f(x)\right) ^{-\frac{2}{N}%
}V(M)^{1-\frac{2}{N}}>-\infty \text{.}
\end{equation*}%
Consequently%
\begin{equation*}
Q_{2,4}(M)>-\infty \text{.}
\end{equation*}%
In the second step, we will show that $Q_{\gamma ,\alpha }(M)\rightarrow
Q_{2,4}(M)$ as $\gamma \rightarrow 2^{-}$ , $\alpha \rightarrow 4^{-}$.

Let $0<\delta <\min (1,\delta \left( M\right) )$, where $\delta (M)$ denotes
the injectivity radius, then 
\begin{equation*}
\int_{M}\frac{bu^{2}}{\rho ^{\alpha }}dv_{g}=\int_{B_{\delta }(P)}\frac{%
bu^{2}}{\rho ^{\alpha }}dv_{g}+\int_{M-B_{\delta }(P)}\frac{bu^{2}}{\rho
^{\alpha }}dv_{g}
\end{equation*}%
and by the Lebesgue's dominated convergence theorem, we obtain that%
\begin{equation*}
\int_{M}\frac{bu^{2}}{\rho ^{\alpha }}dv_{g}\rightarrow \int_{M}\frac{bu^{2}%
}{\rho ^{4}}dv_{g}\text{ as }\alpha \rightarrow 4^{-}\text{.}
\end{equation*}%
The same arguments are also true for%
\begin{equation*}
\int_{M}\frac{a\left\vert \nabla u\right\vert ^{2}}{\rho ^{\gamma }}%
dv_{g}\rightarrow \int_{M}\frac{a\left\vert \nabla u\right\vert ^{2}}{\rho
^{2}}dv_{g}\text{ as }\gamma \rightarrow 2^{-}\text{.}
\end{equation*}%
Hence 
\begin{equation*}
J_{\gamma ,\alpha }(u)\rightarrow J_{2,4}(u)\text{\ as }\gamma \rightarrow
2^{-}\text{ and }\alpha \rightarrow 4^{-}
\end{equation*}%
and by passing to the infimum over $u$ such that $\int_{M}f\left\vert
u\right\vert ^{N}dv_{g}=\left( 1+\left\Vert \frac{a}{\rho ^{\gamma }}%
\right\Vert _{s}+\left\Vert \frac{b}{\rho ^{\alpha }}\right\Vert _{p}\right)
^{\frac{N}{2}}$, we get%
\begin{equation*}
Q_{\gamma ,\alpha }(M)\rightarrow Q_{2,4}(M)\text{ as }\gamma \rightarrow
2^{-}\text{ and }\alpha \rightarrow 4^{-}\text{.}
\end{equation*}%
In this third step, we will show that the sequence $\left( u_{\gamma ,\alpha
}\right) $ is uniformly bounded in $H_{2}(M)$. This sequence satisfies 
\begin{equation*}
\left\Vert u_{\gamma ,\alpha }\right\Vert _{2}^{2}\leq \left( 1+\left\Vert 
\frac{a}{\rho ^{\gamma }}\right\Vert _{s}+\left\Vert \frac{b}{\rho ^{\alpha }%
}\right\Vert _{p}\right) \left( \min_{x\in M}f(x)\right) ^{-\frac{2}{N}%
}V(M)^{1-\frac{2}{N}}
\end{equation*}%
and taking into account of (\ref{46'}) it follows that $\left\Vert u_{\gamma
,\alpha }\right\Vert _{2}^{2}<+\infty $. Now if $1+a\left( P\right)
K(n,1,-2)+b(P)K(n,2,-4)>0$, then%
\begin{equation*}
\left\Vert \Delta u_{\gamma ,\alpha }\right\Vert _{2}^{2}<+\infty
\end{equation*}%
and by the inequality (\ref{9}), 
\begin{equation*}
\left\Vert \nabla u_{\gamma ,\alpha }\right\Vert _{2}^{2}<+\infty \text{.}
\end{equation*}%
Up to a subsequence $\left( u_{u_{\gamma ,\alpha }}\right) $ converges
weakly in $H_{2}(M)$, $L^{N}(M)$ , $L^{2}(M,\rho ^{-4})$ and strongly to $u$
in $L^{r}(M)$ with $r<\frac{2n}{n-4}$ and $\nabla u_{\gamma ,\alpha }$
converges strongly to $\nabla u$ in $L^{s}(M)$ with $s<\frac{2n}{n-2}$, as $%
\gamma \rightarrow 2^{-}$ and $\alpha \rightarrow 4^{-}$. For any $v\in
H_{2}(M)$%
\begin{equation}
\int_{M}\Delta u_{\gamma ,\alpha }\Delta vdv_{g}+\int_{M}\frac{a}{\rho
^{\gamma }}g(\nabla u_{\gamma ,\alpha },\nabla v)dv_{g}+\int_{M}\frac{b}{%
\rho ^{\alpha }}u_{\gamma ,\alpha }vdv_{g}=Q_{\gamma ,\alpha
}(M)\int_{M}f\left\vert u_{\gamma ,\alpha }\right\vert ^{N-2}u_{\gamma
,\alpha }vdv_{g}\text{.}  \label{47}
\end{equation}%
The weak convergence in $H_{2}(M)$ and the strong convergence of $\nabla
u_{\gamma ,\alpha }\rightarrow \nabla u$ allow us to write

\begin{equation*}
\int_{M}\Delta u_{\gamma ,\alpha }\Delta vdv_{g}\rightarrow \int_{M}\Delta
u\Delta vdv_{g}\text{ \ }
\end{equation*}%
and%
\begin{equation*}
\text{ }\int_{M}\frac{a}{\rho ^{\gamma }}g(\nabla u_{\gamma ,\alpha },\nabla
v)dv_{g}\rightarrow \int_{M}\frac{a}{\rho ^{2}}g(\nabla u,\nabla v)dv_{g}
\end{equation*}%
The convergence of the third integral%
\begin{equation*}
\left\vert \int_{M}\frac{b}{\rho ^{\alpha }}u_{\gamma ,\alpha
}vdv_{g}-\int_{M}\frac{b}{\rho ^{4}}uvdv_{g}\right\vert \leq \left\vert
\int_{M}\frac{b}{\rho ^{4}}v\left( u_{\gamma ,\alpha }-u\right)
dv_{g}\right\vert
\end{equation*}%
\begin{equation*}
+\left\vert \int_{M}\frac{b}{\rho ^{4}}u_{\gamma ,\alpha }vdv_{g}-\int_{M}%
\frac{b}{\rho ^{\alpha }}u_{\gamma ,\alpha }vdv_{g}\right\vert
\end{equation*}%
is assured by the weak convergence in $L^{2}(M,\rho ^{-4})$ and the
dominated Lebesque convergence theorem. Since $\left( u_{\gamma ,\alpha
}\right) $ is bounded in $L^{N}(M)$, the sequence $\left( \left\vert
u_{\gamma ,\alpha }\right\vert ^{N-2}u_{\gamma ,\alpha }\right) $ is bounded
in $L^{\frac{N}{N-1}}(M)$, hence $Q_{\gamma ,\alpha }(M)\int_{M}f\left\vert
u_{\gamma ,\alpha }\right\vert ^{N-2}u_{\gamma ,\alpha }vdv_{g}\rightarrow
Q_{2,4}(M)\int_{M}f\left\vert u\right\vert ^{N-2}uvdv_{g}$. Consequently $u$
is a weak solution of equation (\ref{41}).

In this last step, we will prove that $u$ is not trivial. By Sobolev's
inequality given by Lemma \ref{lem5}, we have 
\begin{equation}
\left( 1+\left\Vert \frac{a}{\rho ^{\gamma }}\right\Vert _{s}+\left\Vert 
\frac{b}{\rho ^{\alpha }}\right\Vert _{p}\right) \sup_{x\in M}f(x)^{-\frac{2%
}{N}}\leq \left\Vert u_{\gamma ,\alpha }\right\Vert _{N}^{2}\leq
(K(n,2)^{2}+\varepsilon )\left\Vert \Delta u_{\gamma ,\alpha }\right\Vert
_{2}^{2}+A(\varepsilon )\left\Vert u_{\gamma ,\alpha }\right\Vert _{2}^{2}%
\text{.}  \label{48}
\end{equation}%
Since $u_{\gamma ,\alpha }$ are solutions (\ref{41}), for any $\overline{%
\eta }>0$,%
\begin{equation}
\left\Vert \Delta u_{\gamma ,\alpha }\right\Vert _{2}^{2}=(1+\overline{\eta }%
)\left( Q_{\gamma ,\alpha }(M)-\int_{M}\frac{a}{\rho ^{\gamma }}\left\vert
\nabla u_{\gamma ,\alpha }\right\vert ^{\ 2}dv_{g}-\int_{M}\frac{bu_{\gamma
,\alpha }^{2}}{\rho ^{\alpha }}dv_{g}\right) -\overline{\eta }\left\Vert
\Delta u_{\gamma ,\alpha }\right\Vert _{2}^{2}\text{.}  \label{49}
\end{equation}%
And since as it is shown in previous sections

\begin{equation}
\left\Vert \nabla u_{\gamma ,\alpha }\right\Vert _{2}^{2}\leq \frac{\eta }{%
1-\eta \beta }\left\Vert \Delta u_{\gamma ,\alpha }\right\Vert _{2}^{2}+%
\frac{C(\eta )}{1-\eta \beta }\left\Vert u_{\gamma ,\alpha }\right\Vert
_{2}^{2}  \label{50}
\end{equation}%
where $\beta >0$ and arbitrary $\eta >0$ such that $1-\eta \beta >0$.

And also by Sobolev's inequality, given by Lemma \ref{lem5} 
\begin{equation*}
\int_{M}\frac{bu_{\gamma ,\alpha }^{2}}{\rho ^{\alpha }}dv_{g}\geq
(b(P)-\varepsilon )\int_{B_{\delta }(P)}\frac{bu_{\gamma ,\alpha }^{2}}{\rho
^{\alpha }}dv_{g}-\frac{\left\Vert a\right\Vert _{\infty }}{\delta ^{4}}%
\left\Vert u_{\gamma ,\alpha }\right\Vert _{2}^{2}
\end{equation*}%
\begin{equation}
\geq \left( \min (b(P),0)-\varepsilon \right) \left[ (K(n,2,-\alpha
)^{2}+\varepsilon _{1})\left\Vert \Delta u_{\gamma ,\alpha }\right\Vert
_{2}^{2}+A(\varepsilon _{1})\left\Vert u_{\gamma ,\alpha }\right\Vert
_{2}^{2}\right] -\frac{\left\Vert a\right\Vert _{\infty }}{\delta ^{4}}%
\left\Vert u_{\gamma ,\alpha }\right\Vert _{2}^{2}\text{.}  \label{51}
\end{equation}%
Plugging (\ref{50}) and (\ref{51}) in (\ref{49}), we get%
\begin{equation*}
\left\Vert \Delta u_{\gamma ,\alpha }\right\Vert _{2}^{2}\leq (1+\overline{%
\eta })\left[ Q_{\gamma ,\alpha }(M)+\frac{\eta }{1-\eta \beta }\left\Vert
a\right\Vert _{\infty }\left\Vert \Delta u_{\gamma ,\alpha }\right\Vert
_{2}^{2}+\frac{C(\eta )}{1-\eta \beta }\left\Vert u_{\gamma ,\alpha
}\right\Vert _{2}^{2}\right.
\end{equation*}%
\begin{equation}
\left. -\left( \min (b(P),0)-\varepsilon \right) \left[ (K(n,2,-\alpha
)^{2}+\varepsilon _{1})\left\Vert \Delta u_{\gamma ,\alpha }\right\Vert
_{2}^{2}+A(\varepsilon _{1})\left\Vert u_{\gamma ,\alpha }\right\Vert
_{2}^{2}\right] +\frac{\left\Vert a\right\Vert _{\infty }}{\delta ^{4}}%
\left\Vert u_{\gamma ,\alpha }\right\Vert _{2}^{2}\right]  \label{52}
\end{equation}%
\begin{equation*}
-\overline{\eta }\left\Vert \Delta u_{\gamma ,\alpha }\right\Vert _{2}^{2}
\end{equation*}%
and taking $\overline{\eta }$ so that 
\begin{equation*}
\overline{\eta }=(1+\overline{\eta })\frac{\eta }{1-\eta \beta }\left\Vert
a\right\Vert _{\infty }
\end{equation*}%
we get%
\begin{equation*}
\left[ 1+(1+\overline{\eta })\left( \min (b(P),0)-\varepsilon \right)
(K(n,2,-\alpha )^{2}+\varepsilon _{1})\right] \left\Vert \Delta u_{\gamma
,\alpha }\right\Vert _{2}^{2}\leq
\end{equation*}%
\begin{equation}
(1+\overline{\eta })\left[ Q_{\gamma ,\alpha }(M)+\left( \frac{C(\eta )}{%
1-\eta \beta }-\left( \min (b(P),0)-\varepsilon \right) A(\varepsilon _{1})+%
\frac{\left\Vert a\right\Vert _{\infty }}{\delta ^{4}}\right) \left\Vert
u_{\gamma ,\alpha }\right\Vert _{2}^{2}\right] \text{.}  \label{53}
\end{equation}%
So if 
\begin{equation}
1+b(P)K(n,2,-\alpha )^{2}>0  \label{54}
\end{equation}%
by letting $\varepsilon ,\varepsilon _{1},$ and $\eta $ small enough, we get%
\begin{equation}
\left\Vert \Delta u_{\gamma ,\alpha }\right\Vert _{2}^{2}\leq \frac{(1+%
\overline{\eta })Q_{\gamma ,\alpha }(M)+\left( \frac{C(\eta )}{1-\eta \beta }%
-\left( \min (b(P),0)-\varepsilon \right) A(1,\varepsilon )+\frac{\left\Vert
a\right\Vert _{\infty }}{\delta ^{4}}\right) \left\Vert u_{\gamma ,\alpha
}\right\Vert _{2}^{2}}{1+(1+\overline{\eta })\left( \min
(b(P),0)-\varepsilon \right) (K(n,2,-\alpha )^{2}+\varepsilon _{1})}
\label{55}
\end{equation}%
and replacing in (\ref{48})%
\begin{equation*}
\left[ (K(n,2)^{2}+\varepsilon )\frac{\left( \frac{C(\eta )}{1-\eta \beta }%
-\left( \min (b(P),0)-\varepsilon \right) A(1,\varepsilon )+\frac{\left\Vert
a\right\Vert _{\infty }}{\delta ^{4}}\right) }{1+(1+\overline{\eta })\left(
\min (b(P),0)-\varepsilon \right) (K(n,2,-\alpha )^{2}+\varepsilon _{1})}%
+A(\varepsilon )\right] \left\Vert u_{\gamma ,\alpha }\right\Vert
_{2}^{2}\geq
\end{equation*}%
\begin{equation*}
\left( 1+\left\Vert \frac{a}{\rho ^{\gamma }}\right\Vert _{s}+\left\Vert 
\frac{b}{\rho ^{\alpha }}\right\Vert _{p}\right) \left( \sup_{x\in
M}f\right) ^{-\frac{2}{N}}-\frac{(1+\overline{\eta })Q_{\gamma ,\alpha
}(M)(K(n,2)^{2}+\varepsilon )}{1+(1+\overline{\eta })\left( \min
(b(P),0)-\varepsilon \right) (K(n,2,-\alpha )^{2}+\varepsilon _{1})}\text{.}
\end{equation*}%
Since 
\begin{equation*}
Q_{\gamma ,\alpha }(M)=Q_{2,4}(M)+o(1)
\end{equation*}%
and in addition of (\ref{54}) and the following assumption 
\begin{equation*}
Q_{2,4}(M)K(n,2)^{2}\left( \sup_{x\in M}f\right) ^{\frac{2}{N}}<\left(
1+\left\Vert \frac{a}{\rho ^{\gamma }}\right\Vert _{s}+\left\Vert \frac{b}{%
\rho ^{\alpha }}\right\Vert _{p}\right) \left( 1+b(P)K(n,2,-4)^{2}\right)
\end{equation*}%
we get that the solution $u$ of the sharp equation is not trivial.
\end{proof}

\section{Geometric interpretation}

Consider a flat manifold $\left( M,h\right) $ for example a flat torus and
let $g=Ah$ where $A:M\rightarrow \left( 0,+\infty \right) $ is a positive
radial function given by $A(x)$ $=e^{-\rho ^{2-\sigma }}$ where $\sigma \in
\left( 0,2\right) $ and $\rho $ is the function defined by (\ref{1}).
Clearly if $0<\sigma <\frac{n}{p}-2<4$, then the metric $g=Ah\in
H_{4}^{p}\left( M,T^{\ast }M\otimes T^{\ast }M\right) $ with $p>\frac{n}{4}$.

The respective expressions of the Ricci tensor and the scalar curvature of $%
(M$,$g)$ are then 
\begin{equation*}
Ric_{g}=-\frac{1}{2}\left[ -\Delta \log A+\frac{n}{2}\left\vert \nabla \log
A\right\vert ^{2}\right] g\text{.}
\end{equation*}%
and that of the scalar curvature is 
\begin{equation*}
R_{g}=-\frac{n}{2}\left[ -\Delta \log A+\frac{n}{2}\left\vert \nabla \log
A\right\vert ^{2}\right]
\end{equation*}%
We infer the following expressions%
\begin{equation*}
\alpha =\frac{\left( n-2\right) ^{2}+4}{2\left( n-1\right) \left( n-2\right) 
}R_{g}.g-\frac{4}{n-2}Ric_{g}
\end{equation*}%
\begin{equation*}
=-\frac{n^{3}-4n^{2}+8}{4(n-1)\left( n-2\right) }\left[ -\Delta \log A+\frac{%
n}{2}\left\vert \nabla \log A\right\vert ^{2}\right] .g\text{.}
\end{equation*}%
Put%
\begin{equation}
\widetilde{\alpha }=-\frac{n^{2}-2n-4}{4(n-1)}\left[ -\Delta \log A+\frac{n}{%
2}\left\vert \nabla \log A\right\vert ^{2}\right] \text{.}  \label{56}
\end{equation}%
and%
\begin{equation*}
\widetilde{\beta }=\frac{n-4}{2}Q_{g}^{n}
\end{equation*}%
then%
\begin{equation*}
\widetilde{\beta }=\frac{n-4}{4\left( n-1\right) }\Delta R_{g}+\frac{\left[
n^{3}-4n^{2}+16\left( n-1\right) \right] (n-4)}{16\left( n-1\right)
^{2}\left( n-2\right) ^{2}}R_{g}^{2}-\frac{n-4}{\left( n-2\right) ^{2}}%
\left\vert Ric_{g}\right\vert ^{2}
\end{equation*}%
\begin{equation}
=\frac{n\left( n-4\right) }{8(n-1)}\left[ \frac{\left( n^{3}+4\right) \left(
n-4\right) }{8\left( n-1\right) \left( n-2\right) ^{2}}\left( -\Delta \log A+%
\frac{n}{2}\left\vert \nabla \log A\right\vert ^{2}\right) ^{2}\right.
\label{57}
\end{equation}%
\begin{equation*}
\left. -\left( -\Delta ^{2}\log A+\frac{n}{2}\Delta \left\vert \nabla \log
A\right\vert ^{2}\right) \right] \text{.}
\end{equation*}%
We deduce%
\begin{equation*}
\widetilde{\alpha }^{2}-4\widetilde{\beta }=\frac{3n^{3}-8n^{2}-18n+18}{%
4\left( n-1\right) ^{2}(n-2)^{2}}\left( -\Delta \log A+\frac{n}{2}\left\vert
\nabla \log A\right\vert ^{2}\right) ^{2}
\end{equation*}%
\begin{equation*}
+\frac{n\left( n-4\right) }{2(n-1)}\left( -\Delta ^{2}\log A+\frac{n}{2}%
\Delta \left\vert \nabla \log A\right\vert ^{2}\right) \text{.}
\end{equation*}%
If we let%
\begin{equation*}
a_{n}=\frac{3n^{3}-8n^{2}-18n+18}{4\left( n-1\right) ^{2}(n-2)^{2}}
\end{equation*}%
and 
\begin{equation*}
b_{n}=\frac{n\left( n-4\right) }{2(n-1)}
\end{equation*}%
we write%
\begin{equation}
\widetilde{\alpha }^{2}-4\widetilde{\beta }=a_{n}\left( -\Delta \log A+\frac{%
n}{2}\left\vert \nabla \log A\right\vert ^{2}\right) ^{2}+b_{n}\left(
-\Delta ^{2}\log A+\frac{n}{2}\Delta \left\vert \nabla \log A\right\vert
^{2}\right) \text{.}  \label{58}
\end{equation}%
Observe also that if 
\begin{equation*}
c_{n}=-\frac{n-2}{n-1}
\end{equation*}%
\begin{equation*}
\widetilde{\alpha }^{2}-4\widetilde{\beta }+2\Delta \widetilde{\alpha }%
=a_{n}\left( -\Delta \log A+\frac{n}{2}\left\vert \nabla \log A\right\vert
^{2}\right) ^{2}+c_{n}\left( -\Delta ^{2}\log A+\frac{n}{2}\Delta \left\vert
\nabla \log A\right\vert ^{2}\right)
\end{equation*}%
The radial laplacian writes as 
\begin{equation*}
\Delta =-\frac{1}{\rho ^{n-1}A^{\frac{n}{2}}}\frac{\partial }{\partial \rho }%
\left( A^{\frac{n}{2}-1}\rho ^{n-1}\frac{\partial }{\partial \rho }\right)
\end{equation*}%
\begin{equation*}
=-\frac{1}{A\rho ^{n-1}}\frac{\partial }{\partial \rho }\left( \rho ^{n-1}%
\frac{\partial }{\partial \rho }\right) -\frac{n-2}{2A}\frac{\partial }{%
\partial \rho }\log A\frac{\partial }{\partial \rho }
\end{equation*}%
\begin{equation*}
=-\frac{1}{A}\frac{\partial ^{2}}{\partial \rho ^{2}}-\frac{1}{A}\left( 
\frac{n-1}{\rho }+\frac{n-2}{2}\frac{\partial }{\partial \rho }\log A\right) 
\frac{\partial }{\partial \rho }
\end{equation*}%
we get%
\begin{equation*}
\Delta \log A=\frac{2-\sigma }{A}\left[ \left( n-\sigma \right) \rho
^{-\sigma }-\left( 2-\sigma \right) \frac{n-2}{2}\rho ^{2\left( 1-\sigma
\right) }\right]
\end{equation*}%
and%
\begin{equation*}
\left\vert \nabla \log A\right\vert ^{2}=\frac{\left( 2-\sigma \right) ^{2}}{%
A}\rho ^{2\left( 1-\sigma \right) }
\end{equation*}%
so%
\begin{equation*}
-\Delta \log A+\frac{n}{2}\left\vert \nabla \log A\right\vert ^{2}=\frac{%
2-\sigma }{A}\left[ -\left( n-\sigma \right) \rho ^{-\sigma }+\left(
n-1\right) \left( 2-\sigma \right) \rho ^{2\left( 1-\sigma \right) }\right]
\end{equation*}%
which shows that $\widetilde{\alpha }>0$ if $\rho $ is sufficiently small
and obviously if $0<\sigma <\frac{n}{s}<2$ then $\widetilde{\alpha }\in
L^{s}(M)$ with $s>\frac{n}{2}$ and $\widetilde{\alpha }\in H_{1}^{s}(M)$ if $%
0<\sigma <\frac{n}{s}-1<1$.

On the other hand, 
\begin{equation*}
-\frac{1}{A}\frac{\partial ^{2}}{\partial \rho ^{2}}\left( -\Delta \log A+%
\frac{n}{2}\left\vert \nabla \log A\right\vert ^{2}\right) =
\end{equation*}%
\begin{equation*}
=\frac{2-\sigma }{A^{2}}\left[ \left( n-\sigma \right) \sigma \left( \sigma
+1\right) \rho ^{\sigma -2}+\left( 2-\sigma \right) \left( n-1\right) \left(
1-\sigma \right) \left( \left( 4\sigma -1\right) n-\sigma \right) -\left(
1-\sigma \right) \left( 2-\sigma \right) ^{2}n\rho ^{2-\sigma }\right] \rho
^{-2\sigma }
\end{equation*}%
\begin{equation*}
=\frac{2-\sigma }{A^{2}}\left( n-\sigma \right) \sigma \left( \sigma
+1\right) \rho ^{-\sigma -2}\text{ }+o(1)\text{ as }\rho \rightarrow 0^{+}
\end{equation*}%
and%
\begin{equation*}
-\frac{1}{A}\left( \frac{n-1}{\rho }-\left( 2-\sigma \right) \frac{n}{2}\rho
^{1-\sigma }\right) \frac{\partial }{\partial \rho }\left( -\Delta \log A+%
\frac{n}{2}\left\vert \nabla \log A\right\vert ^{2}\right) =-\frac{2-\sigma 
}{A^{2}}\left( n-1-\left( 2-\sigma \right) \frac{n}{2}\rho ^{2-\sigma
}\right) \times
\end{equation*}%
\begin{equation*}
\left[ \left( 2-\sigma \right) ^{2}n\rho ^{2-3\sigma }+\left( 2-\sigma
\right) \left( n-n\sigma +\sigma \right) \rho ^{-2\sigma }+\left( n-\sigma
\right) \sigma \rho ^{-2-\sigma }\right]
\end{equation*}%
\begin{equation*}
=-\frac{2-\sigma }{A^{2}}\left( n-1\right) \left( n-\sigma \right) \sigma
\rho ^{-\sigma -2}\text{ }+o(1)\text{ as }\rho \rightarrow 0^{+}
\end{equation*}%
Hence%
\begin{equation*}
-\Delta ^{2}\log A=-\frac{2-\sigma }{A^{2}}\sigma \left( n-\sigma \right)
^{2}\rho ^{-\sigma -2}+o(1)\text{ as }\rho \rightarrow 0^{+}\text{.}
\end{equation*}%
Also, we have 
\begin{equation*}
\Delta \left\vert \nabla \log A\right\vert ^{2}=-\frac{\left( 2-\sigma
\right) ^{2}}{A^{2}}\left[ \left( \left( 2-\sigma \right) \left( 2-\sigma
\right) \rho ^{2-\sigma }+\left( 1-\sigma \right) \right) \rho ^{2-3\sigma
}+2\left( 1-\sigma \right) \left( 1-2\sigma \right) \rho ^{-2\sigma }\right.
\end{equation*}%
\begin{equation*}
\left. \left( \left( 2-\sigma \right) \rho ^{2-3\sigma }+2\left( 1-\sigma
\right) \rho ^{-2\sigma }\right) \left( n-1-\left( 2-\sigma \right) \frac{n}{%
2}\rho ^{2-\sigma }\right) \right]
\end{equation*}%
\begin{equation*}
=-\frac{\left( 2-\sigma \right) ^{2}}{A^{2}}\left[ 2\left( 1-\sigma \right)
\left( n-2\sigma \right) \rho ^{-2\sigma }+\left( 2-\sigma \right) \left(
1-\left( n-1\right) \left( 1-\sigma \right) \right) \rho ^{2-3\sigma }\right.
\end{equation*}%
\begin{equation*}
\left. +\left( 2-\sigma \right) ^{2}\rho ^{2-\sigma }\right] =-2\frac{\left(
2-\sigma \right) ^{2}}{A^{2}}\left( 1-\sigma \right) \left( n-2\sigma
\right) \rho ^{-2\sigma }+o(1)\text{ as }\rho \rightarrow 0^{+}
\end{equation*}%
and 
\begin{equation*}
-\Delta ^{2}\log A+\frac{n}{2}\Delta \left\vert \nabla \log A\right\vert
^{2}=-\frac{2-\sigma }{A^{2}}\sigma \left( n-\sigma \right) ^{2}\rho
^{-\sigma -2}+o(1)\text{ as }\rho \rightarrow 0^{+}
\end{equation*}%
hence 
\begin{equation}
-\Delta ^{2}\log A+\frac{n}{2}\Delta \left\vert \nabla \log A\right\vert
^{2}<0  \label{59}
\end{equation}%
for sufficiently small $\rho >0$.

Consequently%
\begin{equation}
\widetilde{\alpha }^{2}-4\widetilde{\beta }+2\Delta \widetilde{\alpha }>0
\label{60}
\end{equation}%
for sufficiently small $\rho >0$. Clearly $\widetilde{\beta }\in L^{p}(M)$
with $\ p>\frac{n}{4}$ provided that $0<\sigma <\frac{n}{p}-2<2$.

Now,we need the following lemma, already obtained by Madani in \cite{18} for
the Sobolev space $H_{2}^{p}\left( M\right) $ with $p>\frac{n}{2}$.

\begin{lemma}
\label{lem9} For $p>\frac{n}{4}$, $\ H_{4}^{p}(M)$ is an algebra i.e. for
any $\varphi $, $\psi \in H_{4}^{p}(M)$ we have $\varphi \psi \in
H_{4}^{p}(M)$.
\end{lemma}

\begin{proof}
It suffices to show that the fourth order covariant derivative $\nabla
^{4}\left( \varphi \psi \right) \in L^{p}(M)$.%
\begin{equation*}
\nabla ^{4}\left( \varphi \psi \right) =\varphi \nabla ^{4}\psi +\psi \nabla
^{4}\varphi +3\left( \nabla ^{2}\varphi \otimes \nabla ^{2}\psi +\nabla
^{2}\psi \otimes \nabla ^{2}\varphi \right)
\end{equation*}%
\begin{equation}
+3\left( \nabla \psi \otimes \nabla ^{3}\varphi +\nabla \varphi \otimes
\nabla ^{3}\psi \right) +\nabla ^{3}\psi \otimes \nabla \varphi +\nabla
^{3}\varphi \otimes \nabla \psi \text{.}  \label{610}
\end{equation}%
Since $p>\frac{n}{4}$, by Theorem \ref{th5}, $\varphi $ and $\psi $ are
continuous functions on $M$, then bounded and by the continuous of the
Sobolev embedding $H_{4-i}^{p}\left( M\right) \hookrightarrow L^{q_{i}}(M)$
with $q_{i}\leq \frac{pn}{n-\left( 4-i\right) p}$ and $i=0$,$1$,$2$,$3$, we
get%
\begin{equation*}
\left\Vert \nabla ^{2}\varphi \otimes \nabla ^{2}\psi \right\Vert _{p}\leq
\left\Vert \nabla ^{2}\varphi \right\Vert _{2p}\left\Vert \nabla ^{2}\psi
\right\Vert _{2p}<+\infty
\end{equation*}%
also, for any $\theta $ such that $1<\theta <\frac{n}{n-p}$ 
\begin{equation*}
\left\Vert \nabla \psi \otimes \nabla ^{3}\varphi \right\Vert _{p}\leq
\left\Vert \nabla ^{3}\varphi \right\Vert _{p\theta }\left\Vert \nabla \psi
\right\Vert _{p\left( 1-\frac{1}{\theta }\right) }<+\infty \text{.}
\end{equation*}%
The same is also true for the other terms of \ \ref{610}.
\end{proof}

The Paneitz-Branson operator $P$ expresses as

\begin{equation*}
P(u)=\Delta _{g}^{2}u-div_{g}\left( \widetilde{\alpha }du\right) +\widetilde{%
\beta }u\text{.}
\end{equation*}%
Given a smooth positive function $f$ on $M$, the problem is to find a metric 
$\widetilde{g}$ in the Sobolev space $H_{4}^{p}\left( M,T^{\ast }M\otimes
T^{\ast }M\right) $, with $p>\frac{n}{4}$, conformal to the metric $g$, of $%
Q $-curvature is $f$.

If $\ \widetilde{g}=u^{\frac{4}{n-4}}g$, $u>0$. $u$ will be a weak solution
in $H_{4}^{p}(M)$ of the following equation

\begin{equation}
\Delta _{g}^{2}u-div_{g}\left( \widetilde{\alpha }du\right) +\widetilde{%
\beta }u=fu^{N-1}  \label{61}
\end{equation}%
with $N=\frac{2n}{n-4}$.

For any $u\in H_{2}(M)$, we let 
\begin{equation*}
J(u)=\int_{M}\left( \Delta _{g}u\right) ^{2}dv_{g}+\int_{M}\widetilde{\alpha 
}\left\vert \nabla u\right\vert _{g}^{2}dv_{g}+\int_{M}\widetilde{\beta }%
u^{2}dv_{g}
\end{equation*}%
\begin{equation*}
B=\left\{ u\in H_{2}(M):\int_{M}fu^{N}dv_{g}=\left( 1+\left\Vert \widetilde{%
\alpha }\right\Vert _{s}+\left\Vert \widetilde{\beta }\right\Vert
_{p}\right) ^{\frac{N}{2}}\right\}
\end{equation*}%
obviously $B\neq \phi $.

\begin{theorem}
Let $\left( M^{n},h\right) $ be a compact flat smooth $n$-manifold with $n>6$
and let $g=Ah$ where $A=e^{-\rho ^{2-\sigma }}$, $0<\sigma <\inf \left( 
\frac{n}{s}-1,\frac{n}{p}-2\right) $,with $s>\frac{n}{2}$, $p>\frac{n}{4}$,
and the function $\rho $ defined by (\ref{1}) is supposed sufficiently
small. Let $f$ be a $C^{\infty }$ positive function on $M$ such that the
maximum of the function $f$ is attained at a point $R$ $\in M$ where the
function $\rho (R)\neq 0$ and 
\begin{equation*}
Q(M)<\left( \sup_{x\in M}f(x)\right) ^{-\frac{1}{N}}K(n,2)^{-2}\left(
1+\left\Vert \widetilde{\alpha }\right\Vert _{s}+\left\Vert \widetilde{\beta 
}\right\Vert _{p}\right) \text{.}
\end{equation*}%
Then there exists a metric $\widetilde{g}\in H_{4}^{p}\left( M,T^{\ast
}M\otimes T^{\ast }M\right) $ conformal to $g$ such that $f$ is the $Q$%
-curvature of the manifold $\left( M,\widetilde{g}\right) $.
\end{theorem}

\begin{proof}
The existence of the metric $\widetilde{g}$ conformal to $g$ reduces to the
existence of a positive solution to equation (\ref{61}). Put 
\begin{equation*}
Q(M)=\inf_{u\in H_{2}(M)-\left\{ 0\right\} }Q(u)=\inf_{u\in B}J(u)\text{.}
\end{equation*}%
Let $u$ be the solution previously constructed in section (4) of the
equation (\ref{61}), we have already shown in section (4) that $u\in
H_{4}^{q}\left( M\right) $ with $4q>n$. To have a weak positive solution to
equation (\ref{61}), we look for a positive solution $v$ to the following
equation 
\begin{equation}
\Delta v+\frac{\widetilde{\alpha }}{2}v=\left\vert \Delta u+\frac{\widetilde{%
\alpha }}{2}u\right\vert  \label{60'}
\end{equation}%
where $\widetilde{\alpha }$ is given by (\ref{56}). Since $\widetilde{\alpha 
}>0$ the operator $I\left( \varphi \right) =\int_{M}\left( \left\vert \nabla
\varphi \right\vert ^{2}+\frac{\widetilde{\alpha }}{2}\varphi ^{2}\right)
dv_{g}$ is coercive on $H_{1}^{2}\left( M\right) $: since if it is not the
case there exists a sequence $\left( \varphi _{m}\right) _{m\in N^{\ast }}$
such that $\left\Vert \varphi _{m}\right\Vert _{\frac{2s}{s-1}}=1$ with $s>%
\frac{n}{2}$ and 
\begin{equation}
I\left( \varphi _{m}\right) <\frac{1}{m}\left\Vert \varphi _{m}\right\Vert
_{H_{1}^{2}\left( M\right) }  \label{61'}
\end{equation}%
taking account of the fact that $\widetilde{\alpha }\in L^{s}\left( M\right) 
$ we get 
\begin{equation*}
\left( 1-\frac{1}{m}\right) \left\Vert \nabla \varphi _{m}\right\Vert
_{2}^{2}\leq \frac{1}{m}\max \left( 1,V(M)\right) +\left\Vert \frac{%
\widetilde{\alpha }}{2}\right\Vert _{s}
\end{equation*}%
so for any $m\geq 2$, \ $\left\Vert \nabla \varphi _{m}\right\Vert
_{2}<\infty $ and the condition $\left\Vert \varphi _{m}\right\Vert _{\frac{%
2s}{s-1}}=1$ implies that $\left\Vert \varphi _{m}\right\Vert _{2}<\infty $.
Consequently the sequence $\left( \varphi _{m}\right) _{m\geq 2}$ is bounded
in $H_{1}^{2}\left( M\right) $. Hence up to a subsequence

$\varphi _{m}\rightarrow \varphi $ weakly in $H_{1}^{2}\left( M\right) $

$\varphi _{m}\rightarrow \varphi $ strongly in $L^{q}\left( M\right) $ with $%
q<\frac{2n}{n-2}=2^{\ast }$

and $\left\Vert \varphi \right\Vert _{H_{1}^{2}(M)}\leq \lim \inf \left\Vert
\varphi _{m}\right\Vert _{H_{1}^{2}(M)}$.

We deduce that%
\begin{equation}
\left\Vert \nabla \varphi \right\Vert _{2}\leq \lim \inf \left\Vert \nabla
\varphi _{m}\right\Vert _{2}\text{.}  \label{61''}
\end{equation}%
Now since $\frac{2s}{s-1}<2^{\ast }$ and $\widetilde{\alpha }\in L^{s}\left(
M\right) $ we get that 
\begin{equation}
\int_{M}\frac{\widetilde{\alpha }}{2}\varphi _{m}^{2}dv_{g}\rightarrow
\int_{M}\frac{\widetilde{\alpha }}{2}\varphi ^{2}dv_{g}  \label{61'''}
\end{equation}%
By relations (\ref{61'}), (\ref{61''}) , (\ref{61'''}) and the fact that $%
\left( \varphi _{m}\right) _{m}$ is bounded in $H_{1}^{2}\left( M\right) $
and $\widetilde{\alpha }>0$ we infer that%
\begin{equation*}
0\leq I\left( \varphi \right) \leq \lim_{m}I(\varphi _{m})=0
\end{equation*}%
i.e. $\left\Vert \nabla \varphi \right\Vert _{2}=0$ and $\int_{M}\frac{%
\widetilde{\alpha }}{2}\varphi ^{2}dv_{g}=0$; the first relation implies
that $\varphi $ is constant a.e. in $M$ and the second one implies $%
\widetilde{\alpha }=0$ a.e. in $M$ which contradicts the definition of $%
\widetilde{\alpha }.$ Hence the Operator $I\left( \varphi \right) $ is
coercive on $H_{1}^{2}\left( M\right) $. Hence by ( \cite{18}, Prop. 6 ) and
the fact that $u$ is a non trivial solution of equation (\ref{61}) in $%
H_{4}^{p}\left( M\right) $ we infer that equation (\ref{60}') has a
nontrivial and nonnegative solution $v\in C^{0,\beta }(M)$ with $\beta \in
\left( 0,1\right) $.

we have to show that $v>0$ on $M$; to do so we need a positive Green's
function to the operator $L=\Delta +\frac{\widetilde{\alpha }}{2}$. For
smooth $\widetilde{\alpha }$, $L$ admits a Green's function. In the case $%
\widetilde{\alpha }\in L^{p}(M)$, F. Madani \cite{18} deduces the existence
of Green's function to $L$ from the smooth case by exploiting the weak
conformal invariant of $L$ ( see \cite{18} ). To study the positivity of the
Green's function of $L$, we need its explicit expression so we give a direct
construction of this latter function. We follow Aubin's construction ( see 
\cite{1} ).

Let $G(x,.)$ denote the Green's function of the Laplacian $\Delta $ i.e. the
solution in distribution sense of the equation $\Delta G(x,.)=\delta _{x}-%
\frac{1}{V(M)}$. The Green's function $G\left( x,.\right) $ is determined up
to a constant in this case and the following estimation holds $\left\vert
G\left( x,y\right) \right\vert \leq Cd(x,y)^{2-n}$ for $x\neq y$ , where $C$
is a constant. Let $\Gamma _{1}(x,.)=-LG(x,.)$ where $L=\Delta +\frac{%
\widetilde{\alpha }}{2}$ and for $j\in N^{\ast }$, $\Gamma _{j+1}\left(
x,y\right) =\int_{M}\Gamma _{j}\left( x,z\right) \frac{\widetilde{\alpha }%
\left( z\right) }{2}G(z,y)dv_{g}\left( z\right) $.The functions $\Gamma _{i}$
are well defined. By Giraud's lemma recurrently we obtain that for any $%
y\neq x$ \ 
\begin{equation*}
\left\vert \Gamma _{j}(x,y)\right\vert \leq \left\{ 
\begin{array}{c}
C_{j}\left\Vert \frac{\widetilde{\alpha }}{2}\right\Vert
_{p}d(x,y)^{2j-n\left( 1+\frac{1}{p}\right) }\text{ \ if }j<\frac{n}{2}%
\left( 1+\frac{1}{p}\right) \\ 
C_{j}\left\Vert \frac{\widetilde{\alpha }}{2}\right\Vert _{p}\left(
1+\left\vert \log d(x,y)\right\vert \right) \text{ if }j=\frac{n}{2}\left( 1+%
\frac{1}{p}\right) \\ 
C_{j}\left\Vert \frac{\widetilde{\alpha }}{2}\right\Vert _{p}\text{ \ \ \ \
\ \ \ \ \ \ \ \ \ \ \ \ \ \ \ \ \ \ \ \ \ \ \ \ if\ \ }j>\frac{n}{2}\left( 1+%
\frac{1}{p}\right)%
\end{array}%
\right.
\end{equation*}%
and since $p>\frac{n}{2}$, it follows that $\Gamma _{j}(x,.)\in L^{1}\left(
M\right) $, for all $j\in N^{\ast }$.

Consider now the following function $H(x,.)=G(x,.)+\sum_{j=1}^{k}\Gamma
_{j+1}\left( x,.\right) +u_{x}$ where $u_{x}\in H_{1}^{2}\left( M\right) $;

$H(x,.)$ will be a Green's function to $L$ i.e. $L_{y}H(x,y)=0$ for any $%
y\neq x$ if $u_{x}$ is a solution in distribution sense to the following
equation%
\begin{equation}
\Delta u_{x}+\frac{\widetilde{\alpha }}{2}u_{x}=-\Gamma _{k}\left(
x,.\right) \text{.}  \label{62'}
\end{equation}%
First, since $p>\frac{n}{2}$ we remark that $\Gamma _{k}\left( x,.\right)
\in L^{p}(M)$ and the same arguments as for equation (\ref{60'}) show that $%
u_{x}$ is a non trivial and non negative continuous solution of equation (%
\ref{62'}). Hence $u_{x}$ is bounded on $M$, consequently \ $H(x,y$) is
bounded and as $G(x,.)$ is determined up to a constant ( see \cite{1}), the
same is true for $H(x,.)$ and we choose the constant such that $H(x,.)$ is
positive. Now the solution $v$ to equation (\ref{60'}) is represented by%
\begin{equation*}
v(x)=\int_{M}\left\vert \Delta u\left( y\right) +\frac{\widetilde{\alpha }}{2%
}u\left( y\right) \right\vert H\left( x,y\right) dv_{g}\left( y\right) \text{%
.}
\end{equation*}%
Hence $v>0$, since if it is not the case and since $H\left( x,y\right) >0$
for $x\neq y$ we obtain that $\Delta u+\frac{\widetilde{\alpha }}{2}u=0$.
Now since the operator $L$ is invertible we get that $u$ is a trivial
solution to equation (\ref{61}) which contradicts the fact that $u$ is not
identically $0$, proved in section (4).

Now we put%
\begin{equation*}
w=v\pm u
\end{equation*}%
so 
\begin{equation}
\Delta w+\frac{\widetilde{\alpha }}{2}w=\left\vert \Delta u+\frac{\widetilde{%
\alpha }}{2}u\right\vert \pm \Delta u+\frac{\widetilde{\alpha }}{2}u\geq 0
\label{62}
\end{equation}%
and the same arguments as for the equation (\ref{60'}) lead to $w\geq 0$ and
we infer that%
\begin{equation*}
v\geq \left\vert u\right\vert
\end{equation*}

On the hand, we have 
\begin{equation*}
\int_{M}fv^{N}dv_{g}\geqslant \int_{M}fu^{N}dv_{g}=1+\left\Vert \widetilde{%
\alpha }\right\Vert _{s}+\left\Vert \widetilde{\beta }\right\Vert _{p}\text{,%
}
\end{equation*}%
we let $0<k<1$, such that 
\begin{equation*}
\int_{M}f\left( kv\right) ^{N}dv_{g}=1+\left\Vert \widetilde{\alpha }%
\right\Vert _{s}+\left\Vert \widetilde{\beta }\right\Vert _{p}
\end{equation*}%
and put $\widehat{v}=kv$, since $v>0$, then \ $\widehat{v}>0$ and satisfies $%
\int_{M}f\widehat{v}^{N}dv_{g}=1+\left\Vert \widetilde{\alpha }\right\Vert
_{s}+\left\Vert \widetilde{\beta }\right\Vert _{p}$ so $\widehat{v}\in B$.

Independently, we have%
\begin{equation*}
\int_{M}\left( \left( \Delta _{g}v\right) ^{2}+\widetilde{\alpha }\left\vert
\triangledown v\right\vert _{g}^{2}+\frac{\widetilde{\alpha }^{2}}{4}%
v^{2}\right) dv_{g}=\int_{M}\left( \left( \Delta _{g}u\right) ^{2}+%
\widetilde{\alpha }\left\vert \triangledown u\right\vert _{g}^{2}+\frac{%
\widetilde{\alpha }^{2}}{4}u^{2}\right) dv_{g}
\end{equation*}%
\begin{equation*}
+\frac{1}{2}\int_{M}\left( u^{2}-v^{2}\right) \Delta _{g}\widetilde{\alpha }%
dv_{g}
\end{equation*}%
and evaluating%
\begin{equation*}
S=\int_{M}\left( \left( \Delta _{g}\widehat{v}\right) ^{2}+\widetilde{\alpha 
}\left\vert \triangledown \widehat{v}\right\vert _{g}^{2}+\widetilde{\beta }%
\widehat{v}^{2}\right) dv_{g}-Q\left( M\right)
\end{equation*}%
\begin{equation*}
=k^{2}\int_{M}\left( \left( \Delta _{g}u\right) ^{2}+\widetilde{\alpha }%
\left\vert \triangledown u\right\vert _{g}^{2}+\widetilde{\beta }%
u^{2}\right) dv_{g}+k^{2}\int_{M}(\widetilde{\beta }-\frac{\widetilde{\alpha 
}^{2}}{4})\left( v^{2}-u^{2}\right) dv_{g}
\end{equation*}%
\begin{equation*}
+\frac{1}{2}k^{2}\int_{M}\left( u^{2}-v^{2}\right) \Delta _{g}\widetilde{%
\alpha }dv_{g}-Q(M)
\end{equation*}%
\begin{equation*}
=\left( k^{2}-1\right) Q(M)+k^{2}\int_{M}(\widetilde{\beta }-\frac{%
\widetilde{\alpha }^{2}}{4}-2\Delta \widetilde{\alpha })\left(
v^{2}-u^{2}\right) dv_{g}.
\end{equation*}%
If $\rho $\ is sufficiently small then by (\ref{60}), we get 
\begin{equation*}
\widetilde{\beta }-\frac{\widetilde{\alpha }^{2}}{4}-2\Delta \widetilde{%
\alpha }\leq 0\text{.}
\end{equation*}%
Since $k^{2}-1<0$, $Q(M)\geq 0$, we infer that 
\begin{equation*}
S\leq 0\text{.}
\end{equation*}%
Hence 
\begin{equation*}
Q\left( M\right) \geq \int_{M}\left( \left( \Delta _{g}\widehat{v}\right)
^{2}+\widetilde{\alpha }\left\vert \triangledown \widehat{v}\right\vert
_{g}^{2}+\widetilde{\beta }\widehat{v}^{2}\right) dv_{g}
\end{equation*}%
and by the definition of $Q(M)$, we deduce that 
\begin{equation*}
Q\left( M\right) =\int_{M}\left( \left( \Delta _{g}\widehat{v}\right) ^{2}+%
\widetilde{\alpha }\left\vert \triangledown \widehat{v}\right\vert _{g}^{2}+%
\widetilde{\beta }\widehat{v}^{2}\right) dv_{g}.
\end{equation*}%
Consequently the infimum $Q(M)$ over $B$ is attained by the positive
function $\widehat{v}\in B$. If we write the Euler-Lagrange equation for $%
\widehat{v}$, we find that $\widehat{v}$ is a weak positive solution in $%
H_{2}\left( M\right) $ to equation (\ref{61}). Similar arguments as in the
proof of Theorem \ref{th6} lead to conclude that $\widehat{v}\in
H_{4}^{p}(M) $. Now since the function $A=\rho ^{2-\sigma }$, with $0<\sigma
<\frac{n}{p}-2<2$ belongs to $H_{4}^{p}(M)$, it follows by Lemma \ref{lem9}
that $A\widehat{v}\in H_{4}^{p}(M)$. Consequently the metric $\widetilde{g}=%
\widehat{v}g=\left( A\widehat{v}\right) h\in H_{4}^{p}\left( M,T^{\ast
}M\otimes T^{\ast }M\right) $
\end{proof}

\section{Proof of theorems \protect\ref{th01} and \protect\ref{th02}}

Let $P\in M$ such that $f(P)$ is the maximum of $f$ on $M$ and the metric is
of class $C^{\infty }$ on the ball $B_{2\varrho }(P)$ where $0<2\varrho
<\delta $ and $\delta $ is the injectivity radius.

Consider the function 
\begin{equation*}
\varphi _{\epsilon }(r)=\frac{\eta (r)}{\left( r^{2}+\epsilon ^{2}\right) ^{%
\frac{n-4}{2}}}
\end{equation*}%
where $\eta (r)$ is a $C^{\infty }$ function on $M$ given by%
\begin{equation*}
\eta (r)=\left\{ 
\begin{array}{c}
1\text{ \ \ \ on \ \ \ }B_{\varrho }\left( P\right) \\ 
0\text{ \ on \ }M-B_{2\varrho }\left( P\right)%
\end{array}%
\text{ \ \ \ }\right. \text{.}
\end{equation*}%
For $n>6$, by H\"{o}lder's inequality, we get

\begin{equation*}
B=\int_{B_{\varrho }\left( P\right) }a(x)\left\vert \nabla \varphi
_{\epsilon }(r)\right\vert ^{2}dv_{g}\leq \left( \int_{B_{\varrho }\left(
P\right) }\left\vert a(x)\right\vert ^{s}dv_{g}\right) ^{\frac{1}{s}}\left(
\int_{B_{\varrho }\left( P\right) }\left\vert \nabla \varphi _{\epsilon
}(r)\right\vert ^{\frac{2s}{s-1}}dv_{g}\right) ^{1-\frac{1}{s}}
\end{equation*}%
and taking account of%
\begin{equation*}
\left\vert \nabla \varphi _{\epsilon }(r)\right\vert =(n-4)\frac{r}{\left(
r^{2}+\epsilon ^{2}\right) ^{\frac{n-2}{2}}}
\end{equation*}%
on $B_{\epsilon }(P)$ and 
\begin{equation*}
dv_{g}=(1-\frac{1}{6}R_{ij}x^{i}x^{j})+o(r^{2})
\end{equation*}%
where we have written $R_{g}$ instead of $R_{g}(P)$ we get%
\begin{equation*}
B^{\prime }=\int_{B_{\varrho }(P)}\left\vert \nabla \varphi _{\epsilon
}(r)\right\vert ^{\frac{2s}{s-1}}dv_{g}=\left( n-4\right) ^{\frac{2s}{s-1}%
}\omega _{n-1}\int_{0}^{\varrho }(1-\frac{R_{g}}{6n}r^{2})\frac{r^{\frac{2s}{%
s-1}+n-1}}{\left( r^{2}+\epsilon ^{2}\right) ^{\left( n-2\right) \frac{s}{s-1%
}}}dr+o\left( \epsilon ^{2}\right)
\end{equation*}%
if we put 
\begin{equation*}
t=\left( \frac{r}{\epsilon }\right) ^{2}
\end{equation*}%
\begin{equation}
B^{\prime }=\frac{1}{2}\epsilon ^{-n+4+\frac{2s}{s-1}}\left( n-4\right) ^{%
\frac{2s}{s-1}}\omega _{n-1}\int_{0}^{\frac{\varrho ^{2}}{\epsilon ^{2}}}(1-%
\frac{R_{g}}{6n}\epsilon ^{2}t)\frac{t^{\frac{s}{s-1}+\frac{n-2}{2}}}{\left(
1+t\right) ^{\left( n-2\right) \frac{s}{s-1}}}dt+o\left( \epsilon ^{2}\right)
\label{63}
\end{equation}%
and letting for any real numbers $p$, $q$ with $p-q>1$,%
\begin{equation*}
I_{p}^{q}=\int_{0}^{+\infty }\frac{t^{q}}{\left( 1+t\right) ^{p}}dt
\end{equation*}%
and if 
\begin{equation}
\left( n-8\right) s+n+2>0  \label{63''''}
\end{equation}%
which is fulfilled if $n\geq 8$ and for $n=7$ we must have $\frac{7}{2}<s<$ $%
9$, we get%
\begin{equation*}
B^{\prime }=\frac{1}{2}\epsilon ^{\left( -n+3\right) \frac{2s}{s-1}+n}\left(
n-4\right) ^{\frac{2s}{s-1}}\omega _{n-1}\left( I_{\left( n-2\right) \frac{s%
}{s-1}}^{\frac{s}{s-1}+\frac{n-2}{2}}-\epsilon ^{2}\frac{R_{g}}{6n}I_{\left(
n-2\right) \frac{s}{s-1}}^{\frac{s}{s-1}+\frac{n-2}{2}+1}\right) +o(\epsilon
^{2})
\end{equation*}%
and taking account of%
\begin{equation}
I_{p}^{q+1}=\frac{q+1}{p-q-2}I_{p}^{q}\text{ if \ }p-q-2>0  \label{63''}
\end{equation}%
we deduce that 
\begin{equation*}
B^{\prime 1-\frac{1}{s}}=\left( \frac{1}{2}\right) ^{1-\frac{1}{s}}\epsilon
^{-n+4+2-\frac{n}{s}}\left( n-4\right) ^{2}\omega _{n-1}^{1-\frac{1}{s}%
}\left( I_{\left( n-2\right) \frac{s}{s-1}}^{\frac{s}{s-1}+\frac{n}{2}%
-1}\right) ^{1-\frac{1}{s}}
\end{equation*}%
\begin{equation*}
\times \left( 1-\epsilon ^{2}\frac{R_{g}}{6n}\frac{\frac{n}{2}+\frac{s}{s-1}%
}{\left( n-3\right) \frac{s}{s-1}-\frac{n}{2}-1}+o\left( \epsilon
^{2}\right) \right) ^{1-\frac{1}{s}}
\end{equation*}%
and since it is not difficult to show that 
\begin{equation*}
\lim_{\epsilon \rightarrow 0^{+}}\int_{B_{2\varrho }\left( P\right)
-B_{\varrho }\left( P\right) }\left\vert \nabla \varphi _{\epsilon
}(r)\right\vert ^{\frac{2s}{s-1}}dv_{g}=0
\end{equation*}%
we infer that 
\begin{equation*}
B=\int_{M}a(x)\left\vert \nabla \varphi _{\epsilon }(r)\right\vert
^{2}dv_{g}\leq \left( \frac{1}{2}\right) ^{1-\frac{1}{s}}\left\Vert
a\right\Vert _{s}\epsilon ^{-n+4+2-\frac{n}{s}}\left( n-4\right) ^{2}\omega
_{n-1}^{1-\frac{1}{s}}\left( I_{\left( n-2\right) \frac{s}{s-1}}^{\frac{s}{%
s-1}+\frac{n}{2}-1}\right) ^{1-\frac{1}{s}}\times
\end{equation*}%
\begin{equation*}
\left( 1-\epsilon ^{2}\frac{R_{g}}{6n}\frac{s-1}{s}\frac{\frac{n}{2}+\frac{s%
}{s-1}}{\left( n-3\right) \frac{s}{s-1}-\frac{n}{2}-1}+o\left( \epsilon
^{2}\right) \right) \text{.}
\end{equation*}%
H\"{o}lder's inequality leads to 
\begin{equation*}
C=\int_{M}b(x)\left( \varphi _{\epsilon }(r)\right) ^{2}dv_{g}
\end{equation*}%
\begin{equation*}
\leq \left\Vert b\right\Vert _{p}\left( \int_{M}\varphi _{\epsilon }(r)^{%
\frac{2p}{p-1}}dv_{g}\right) ^{1-\frac{1}{p}}
\end{equation*}%
\begin{equation*}
=\left\Vert b\right\Vert _{p}\left( \int_{B_{\varrho }\left( P\right)
}\varphi _{\epsilon }(r)^{\frac{2p}{p-1}}dv_{g}+\left( \int_{B_{2\varrho
}\left( P\right) -B_{\varrho }\left( P\right) }\varphi _{\epsilon }(r)^{%
\frac{2p}{p-1}}dv_{g}\right) \right) ^{1-\frac{1}{p}}
\end{equation*}%
and 
\begin{equation*}
\int_{B_{\varrho }\left( P\right) }\varphi _{\epsilon }(r)^{\frac{2p}{p-1}%
}dv_{g}
\end{equation*}%
\begin{equation*}
=\omega _{n-1}\int_{0}^{\varrho }\frac{r^{n-1}}{\left( \epsilon
^{2}+r^{2}\right) ^{\left( n-4\right) \frac{p}{p-1}}}\left( 1-\frac{R_{g}}{6n%
}r^{2}+o\left( r^{2}\right) \right) dr
\end{equation*}%
hence if 
\begin{equation}
\left( n-4\right) \frac{p}{p-1}-\frac{n}{2}-1>0  \label{63'}
\end{equation}%
which is fulfilled in case $n\geq 10$, for $n=7$, $8$, $9$ \ to have (\ref%
{63'}) \ satisfied we must have respectively $\frac{7}{4}<p<3$, $2<p<5$, $%
\frac{9}{4}<p<11$. 
\begin{equation*}
\int_{B_{\varrho }\left( P\right) }\varphi _{\epsilon }(r)^{\frac{2p}{p-1}%
}dv_{g}=\frac{1}{2}\epsilon ^{-2\left( n-4\right) \frac{p}{p-1}+n}\omega
_{n-1}\left( I_{\left( n-4\right) \frac{p}{p-1}}^{\frac{n}{2}-1}-\frac{R_{g}%
}{6n}\epsilon ^{2}I_{\left( n-4\right) \frac{p}{p-1}}^{\frac{n}{2}}+o\left(
\epsilon ^{2}\right) \right)
\end{equation*}%
and by relation (\ref{63''}) we get 
\begin{equation*}
I_{\left( n-4\right) \frac{p}{p-1}}^{\frac{n}{2}}=\frac{n(p-1)}{pn-10p+n+2}%
I_{\left( n-4\right) \frac{p}{p-1}}^{\frac{n}{2}-1}\text{.}
\end{equation*}%
Hence we infer that%
\begin{equation*}
\left( \int_{B_{\varrho }\left( P\right) }\varphi _{\epsilon }(r)^{\frac{2p}{%
p-1}}dv_{g}\right) ^{1-\frac{1}{p}}=\left( \frac{1}{2}\right) ^{1-\frac{1}{p}%
}\epsilon ^{-n+4+4-\frac{n}{p}}\omega _{n-1}^{1-\frac{1}{p}}\left( I_{\left(
n-4\right) \frac{p}{p-1}}^{\frac{n}{2}-1}\right) ^{1-\frac{1}{p}}\times
\end{equation*}%
\begin{equation*}
\left( 1-\frac{\left( p-1\right) ^{2}}{6p\left( pn-10p+n+2\right) }%
R_{g}\epsilon ^{2}+o\left( \epsilon ^{2}\right) \right)
\end{equation*}%
and since it is easy to show that $\int_{B_{2\varrho }\left( P\right)
-B_{\varrho }\left( P\right) }\varphi _{\epsilon }(r)^{\frac{2p}{p-1}%
}dv_{g}\rightarrow 0$ as $\epsilon \rightarrow 0^{+}$, we deduce that 
\begin{equation*}
C\leq \left( \frac{1}{2}\right) ^{1-\frac{1}{p}}\epsilon ^{-n+4+\left( 4-%
\frac{n}{p}\right) }\omega _{n-1}^{1-\frac{1}{p}}\left\Vert b\right\Vert
_{p}\left( I_{\left( n-4\right) \frac{p}{p-1}}^{\frac{n}{2}-1}\right) ^{1-%
\frac{1}{p}}\left( 1-\frac{\left( p-1\right) ^{2}}{6p\left(
pn-10p+n+2\right) }R_{g}\epsilon ^{2}+o\left( \epsilon ^{2}\right) \right) 
\text{.}
\end{equation*}%
The expansion of $\int_{B_{\delta }\left( P\right) }f(x)\varphi _{\epsilon
}^{N}\left( r\right) dv_{g}$ has been computed in (\cite{7}) and is given by 
\begin{equation*}
\int_{B_{\delta }\left( P\right) }f(x)\varphi _{\epsilon }^{N}\left(
r\right) dv_{g}=\frac{\omega _{n-1}I_{n}^{\frac{n}{2}-1}}{2}\epsilon
^{-n}\left( f(P)-\frac{\epsilon ^{2}}{n-2}\left( \frac{\Delta f}{2}+\frac{%
f(P)R_{g}}{6}\right) +o\left( \epsilon ^{2}\right) \right)
\end{equation*}%
hence%
\begin{equation*}
\left( \int_{B_{\delta }\left( P\right) }f(x)\varphi _{\epsilon }^{N}\left(
r\right) dv_{g}\right) ^{-\frac{2}{N}}=
\end{equation*}%
\begin{equation*}
=\left( \frac{\omega _{n-1}I_{n}^{\frac{n}{2}-1}}{2}\epsilon
^{-n}f(P)\right) ^{-\frac{n-4}{n}}\left( 1+\epsilon ^{2}\frac{n-4}{n(n-2)}%
\left[ \frac{\Delta f\left( P\right) }{2f(P)}+\frac{R_{g}}{6}\right]
+o\left( \epsilon ^{2}\right) \right) \text{.}
\end{equation*}%
\begin{equation}
=\frac{2^{\frac{n-4}{n}}\epsilon ^{n-4}}{\left( I_{n}^{\frac{n}{2}-1}\omega
_{n-1}f(P)\right) ^{\frac{n-4}{n}}}\left( 1+\epsilon ^{2}\frac{n-4}{n\left(
n-2\right) }\left( \frac{\Delta f\left( P\right) }{2f(P)}+\frac{R_{g}}{6}%
\right) +o\left( \epsilon ^{2}\right) \right) \text{.}  \label{63'''}
\end{equation}%
Also, we have%
\begin{equation*}
-\Delta \varphi _{\epsilon }=\frac{1}{r^{n-1}}\frac{\partial }{\partial r}%
\left( r^{n-1}\frac{\partial \varphi _{\epsilon }}{\partial r}\right) +\frac{%
\partial \varphi _{\epsilon }}{\partial r}\frac{\partial }{\partial r}\log 
\sqrt{\left\vert g\right\vert }
\end{equation*}%
where $\left\vert g\right\vert =\det \left( g\right) $. The following
calculations are the same as in \cite{4} so we recall it briefly. On the
ball $B_{\delta }\left( P\right) $, we have%
\begin{equation*}
\left( \Delta \varphi _{\epsilon }(r)\right) ^{2}=\left( n-4\right)
^{2}\left( \frac{\left( 2r^{2}+n\epsilon ^{2}\right) ^{2}}{\left( \epsilon
^{2}+r^{2}\right) ^{n}}+\frac{\left( r\frac{\partial }{\partial r}\log \sqrt{%
\left\vert g\right\vert }\right) ^{2}}{\left( \epsilon ^{2}+r^{2}\right)
^{n-2}}+2r\frac{2r^{2}+n\epsilon ^{2}}{\left( \epsilon ^{2}+r^{2}\right)
^{n-1}}\frac{\partial }{\partial r}\log \sqrt{\left\vert g\right\vert }%
\right)
\end{equation*}%
so%
\begin{equation*}
\int_{B_{\delta }\left( P\right) }\left( \Delta \varphi _{\epsilon }\right)
^{2}dv_{g}=\left( n-4\right) ^{2}\omega _{n-1}\int_{0}^{\varrho }\left( 
\frac{\left( 2r^{2}+n\epsilon ^{2}\right) ^{2}}{\left( \epsilon
^{2}+r^{2}\right) ^{n}}+\frac{\left( r\frac{\partial }{\partial r}\log \sqrt{%
\left\vert g\right\vert }\right) ^{2}}{\left( \epsilon ^{2}+r^{2}\right)
^{n-2}}+2r\frac{2r^{2}+n\epsilon ^{2}}{\left( \epsilon ^{2}+r^{2}\right)
^{n-1}}\frac{\partial }{\partial r}\log \sqrt{\left\vert g\right\vert }%
\right)
\end{equation*}%
\begin{equation*}
\times \left( 1-\frac{R_{g}}{6n}r^{2}+o(r^{2})\right) r^{n-1}dr
\end{equation*}%
and taking account of $\frac{\partial }{\partial r}\log \sqrt{\left\vert
g\right\vert }=-\frac{R_{g}}{3n}r+o(r)$, we get 
\begin{equation*}
\int_{0}^{\varrho }\frac{\left( 2r^{2}+n\epsilon ^{2}\right) ^{2}}{\left(
\epsilon ^{2}+r^{2}\right) ^{n}}\left( 1-\frac{R_{g}}{6n}r^{2}+o(r^{2})%
\right) r^{n-1}dr=
\end{equation*}%
\begin{equation}
=\frac{1}{2}\epsilon ^{-n+4}I_{n}^{\frac{n}{2}-1}\left[ \frac{n(n-2)\left(
n+2\right) }{n-4}-\frac{n^{2}+4}{6\left( n-6\right) }R_{g}\epsilon
^{2}+o\left( \epsilon ^{2}\right) \right]  \label{64}
\end{equation}%
also%
\begin{equation*}
\int_{0}^{\varrho }\frac{\left( r\frac{\partial }{\partial r}\log \sqrt{%
\left\vert g\right\vert }\right) ^{2}}{\left( \epsilon ^{2}+r^{2}\right)
^{n-2}}\left( 1-\frac{R_{g}}{6n}r^{2}+o(r^{2})\right) r^{n-1}dr=\frac{R_{g}}{%
3n}\int_{0}^{\varrho }\frac{r^{n+3}}{\left( \epsilon ^{2}+r^{2}\right) ^{n-2}%
}\left( 1+o(r)\right) dr
\end{equation*}%
\begin{equation}
=-\epsilon ^{-n+6}\frac{R_{g}}{3n}\left( 1+o\left( \epsilon \right) \right)
I_{n-1}^{\frac{n}{2}+1}  \label{65}
\end{equation}%
and 
\begin{equation}
\int_{0}^{\varrho }2r\frac{2r^{2}+n\epsilon ^{2}}{\left( \epsilon
^{2}+r^{2}\right) ^{n-1}}\frac{\partial }{\partial r}\log \sqrt{\left\vert
g\right\vert }\left( 1-\frac{R_{g}}{6n}r^{2}+o(r^{2})\right)
r^{n-1}dr=-\epsilon ^{-n+4}.o\left( \epsilon ^{3}\right)  \label{66}
\end{equation}%
Hence (\ref{64}), (\ref{65}) and (\ref{66}) lead to

\begin{equation}
A=\int_{M}\left( \Delta \varphi _{\epsilon }\right)
^{2}dv_{g}=\int_{B_{\epsilon }\left( P\right) }\left( \Delta \varphi
_{\epsilon }\right) ^{2}dv_{g}+\int_{B_{2\epsilon }(P)-B_{\epsilon }\left(
P\right) }\left( \Delta \varphi _{\epsilon }\right) ^{2}dv_{g}  \label{66'}
\end{equation}%
\begin{equation*}
=\frac{1}{2}\epsilon ^{-n+4}\left( n-4\right) n\left( n^{2}-4\right) \omega
_{n-1}I_{n}^{\frac{n}{2}-1}\left[ 1-R_{g}\epsilon ^{2}\left( \frac{n^{2}+4}{%
6\left( n-6\right) }+\frac{4\left( n-1\right) \left( n-2\right) }{3\left(
n-4\right) \left( n-6\right) }\right) \frac{n-4}{n\left( n^{2}-4\right) }%
+o\left( \epsilon ^{2}\right) \right]
\end{equation*}%
\newline
Resuming, we obtain%
\begin{equation*}
J\left( \varphi _{\epsilon }\right) \leq \left( n-4\right) n\left(
n^{2}-4\right) \left( \frac{\omega _{n-1}I_{n}^{\frac{n}{2}-1}}{2}\right) ^{%
\frac{4}{n}}f(P)^{-\frac{2}{N}}\left( 1+\epsilon ^{2}\frac{n-4}{n\left(
n-2\right) }\left( \frac{\Delta f\left( P\right) }{2f(P)}+\frac{R_{g}}{6}%
\right) +o\left( \epsilon ^{2}\right) \right)
\end{equation*}%
\begin{equation*}
\times \left[ 1+\epsilon ^{2-\frac{n}{s}}\left( \frac{1}{2}\right) ^{-\frac{1%
}{s}}\frac{n-4}{n(n^{2}-4)}\left\Vert a\right\Vert _{s}\omega _{n-1}^{1-%
\frac{1}{s}}\left( I_{\left( n-2\right) \frac{s}{s-1}}^{\frac{s}{s-1}+\frac{n%
}{2}-1}\right) ^{1-\frac{1}{s}}\right.
\end{equation*}%
\begin{equation*}
\left. +\epsilon ^{4-\frac{n}{p}}\left( \frac{1}{2}\right) ^{-\frac{1}{p}}%
\frac{1}{\left( n-4\right) n\left( n^{2}-4\right) }\omega _{n-1}^{1-\frac{1}{%
p}}\left\Vert b\right\Vert _{p}\left( I_{\left( n-4\right) \frac{p}{p-1}}^{%
\frac{n}{2}-1}\right) ^{1-\frac{1}{p}}\right.
\end{equation*}%
\begin{equation*}
\left. -\epsilon ^{2}\left( \frac{n^{2}+4}{6\left( n-6\right) }+\frac{%
4\left( n-1\right) \left( n-2\right) }{3\left( n-4\right) \left( n-6\right) }%
\right) \frac{n-4}{n\left( n^{2}-4\right) }R_{g}+o\left( \epsilon
^{2}\right) \right]
\end{equation*}%
\begin{equation*}
=\left( n-4\right) n\left( n^{2}-4\right) \left( \frac{\omega _{n-1}I_{n}^{%
\frac{n}{2}-1}}{2}\right) ^{\frac{4}{n}}f(P)^{-\frac{2}{N}}\left( 1-\epsilon
^{2}\left( \frac{n^{2}+4n-20}{6\left( n-6\right) (n^{2}-4)}R_{g}-\frac{n-4}{%
n\left( n-2\right) }\frac{\Delta f(P)}{2f(P)}\right) \right) +o\left(
\epsilon ^{2}\right) \text{.}
\end{equation*}%
Now since $\epsilon $ is arbitrary small and $n<\min (2s,4p)$ we have%
\begin{equation*}
\epsilon ^{2-\frac{n}{s}}\left( \frac{1}{2}\right) ^{-\frac{1}{s}}\frac{n-4}{%
n(n^{2}-4)}\left\Vert a\right\Vert _{s}\omega _{n-1}^{1-\frac{1}{s}}\left(
I_{\left( n-2\right) \frac{s}{s-1}}^{\frac{s}{s-1}+\frac{n}{2}-1}\right) ^{1-%
\frac{1}{s}}\leq \left\Vert a\right\Vert _{s}
\end{equation*}%
and 
\begin{equation*}
\epsilon ^{4-\frac{n}{p}}\left( \frac{1}{2}\right) ^{-\frac{1}{p}}\frac{1}{%
\left( n-4\right) n\left( n^{2}-4\right) }\omega _{n-1}^{1-\frac{1}{p}%
}\left\Vert b\right\Vert _{p}\left( I_{\left( n-4\right) \frac{p}{p-1}}^{%
\frac{n}{2}-1}\right) ^{1-\frac{1}{p}}\leq \left\Vert b\right\Vert _{p}.
\end{equation*}%
Hence%
\begin{equation*}
J\left( \varphi _{\epsilon }\right) \leq n\left( n-4\right) \left(
n^{2}-4\right) \left( \frac{I_{n}^{\frac{n}{2}-1}\omega _{n-1}}{2}\right) ^{%
\frac{4}{n}}f(P)^{-\frac{2}{N}}\left( 1+\left\Vert a\right\Vert
_{s}+\left\Vert b\right\Vert _{p}\right)
\end{equation*}%
\begin{equation*}
\times \left[ 1-\epsilon ^{2}\left\{ \frac{n^{2}+4n-20}{6\left( n-6\right)
(n^{2}-4)}R_{g}-\frac{n-4}{2n\left( n-2\right) }\frac{\Delta f(P)}{f(P)}%
\right\} \frac{1}{1+\left\Vert a\right\Vert _{s}+\left\Vert b\right\Vert _{p}%
}\right] +o\left( \epsilon ^{2}\right)
\end{equation*}%
Recall the value of the best Sobolev's constant, 
\begin{equation*}
K\left( n,2\right) ^{-2}=n\left( n^{2}-4\right) \left( n-4\right) \left( 
\frac{I_{n}^{\frac{n}{2}-1}\omega _{n-1}}{2}\right) ^{\frac{4}{n}}
\end{equation*}%
so if 
\begin{equation*}
\frac{n^{2}+4n-20}{6\left( n-6\right) (n^{2}-4)}R_{g}-\frac{n-4}{2n\left(
n-2\right) }\frac{\Delta f(P)}{f(P)}>0
\end{equation*}%
we get%
\begin{equation*}
Q(M)<K\left( n,2\right) ^{-2}f(P)^{-\frac{2}{N}}\left( 1+\left\Vert
a\right\Vert _{s}+\left\Vert b\right\Vert _{p}\right) \text{ \ \ \ \ as \ \
\ \ \ }\epsilon \rightarrow 0^{+}\text{.}
\end{equation*}%
For $n=6$, direct computations give, for $\frac{3}{2}<p<2$ and $3<s<4$, 
\begin{equation*}
J\left( \varphi _{\epsilon }\right) \leq K\left( n,2\right) ^{-2}f(P)^{-%
\frac{1}{3}}\left( 1+\left\Vert a\right\Vert _{s}+\left\Vert b\right\Vert
_{p}\right) \left( 1-4\epsilon ^{2}\frac{\log \left( \frac{1}{\epsilon ^{2}}%
\right) \left( \frac{\omega _{5}}{2}\right) ^{\frac{3}{4}}\left(
f(P)I_{6}^{2}\right) ^{-\frac{1}{3}}}{1+\left\Vert a\right\Vert
_{s}+\left\Vert b\right\Vert _{p}}\frac{R_{g}}{3}\right)
\end{equation*}%
\begin{equation*}
+o\left( \epsilon ^{2}\right)
\end{equation*}%
so, if $R_{g}>0$, we obtain 
\begin{equation*}
Q(M)<K\left( n,2\right) ^{-2}f(P)^{-\frac{1}{3}}\left( 1+\left\Vert
a\right\Vert _{s}+\left\Vert b\right\Vert _{p}\right) \text{ \ as }\epsilon
\rightarrow 0^{+}\text{.}
\end{equation*}

In Corollary \ref{Cor2} the coefficients of our equation are of the form $%
\frac{a}{\rho ^{\gamma }}$ and $\frac{b}{\rho ^{\alpha }}$ where the
functions $a$ and $b$ are smooth functions on $M$ and $0<\gamma <\frac{n}{s}%
<2$, $0<\alpha <\frac{n}{p}<4$. Considering the expansions at $P$ for the
functions $a$ and $b$ we get for $n\geq 6$ 
\begin{equation}
B=\int_{M}\frac{a(x)}{\rho ^{\gamma }}\left\vert \nabla u\right\vert
^{2}dv_{g}=\frac{1}{2}\left( n-4\right) ^{2}\epsilon ^{-n+6-\gamma }\omega
_{n-1}a(P)I_{n-2}^{\frac{n-\gamma }{2}}\left( 1+o\left( \epsilon ^{2}\right)
\right) \text{.}  \label{67}
\end{equation}%
If $n-8+\alpha >0$ that means $\alpha >1$ for $n=7$ and $\alpha >2$ for $n=6$%
, we obtain 
\begin{equation}
C=\int_{M}\frac{b(x)u^{2}}{\rho ^{\alpha }}dv_{g}=\frac{1}{2}\epsilon
^{-n+8-\alpha }\omega _{n-1}b(P)I_{n-4}^{\frac{n-\alpha }{2}-1}\left(
1+o\left( \epsilon ^{2}\right) \right) \text{.}  \label{68}
\end{equation}%
Now letting $\epsilon $ sufficiently small so that%
\begin{equation*}
\frac{n-4}{n\left( n^{2}-4\right) }\epsilon ^{2-\gamma }\omega _{n-1}a(P)%
\frac{I_{n-2}^{\frac{n-\gamma }{2}}}{I_{n}^{\frac{n}{2}-1}}\leq \left\Vert 
\frac{a}{\rho ^{\gamma }}\right\Vert _{s}
\end{equation*}%
and 
\begin{equation*}
\frac{1}{\left( n-4\right) n\left( n^{2}-4\right) }\epsilon ^{4-\alpha
}\omega _{n-1}b(P)\frac{I_{n-4}^{\frac{n-\alpha }{2}-1}}{I_{n}^{\frac{n}{2}%
-1}}\leq \left\Vert \frac{b}{\rho ^{\alpha }}\right\Vert _{p}
\end{equation*}%
we infer that, for $n>6$ and $1<\alpha <\frac{n}{p}<4$, and if the following
condition holds%
\begin{equation*}
\frac{n^{2}+4n-20}{6\left( n-6\right) (n^{2}-4)}R_{g}-\frac{n-4}{2n\left(
n-2\right) }\frac{\Delta f(P)}{f(P)}>0
\end{equation*}%
then 
\begin{equation*}
Q_{\gamma ,\alpha }(M)<K\left( n,2\right) ^{-2}f(P)^{-\frac{2}{N}}\left(
1+\left\Vert \frac{a}{\rho ^{\gamma }}\right\Vert _{s}+\left\Vert \frac{b}{%
\rho ^{\alpha }}\right\Vert _{p}\right) \text{ as }\epsilon \rightarrow 0^{+}%
\text{.}
\end{equation*}%
In the case $n=6$ and $2<\alpha <\frac{n}{p}<4$ similar computations allow
us to claim 
\begin{equation*}
Q_{\gamma ,\alpha }(M)<K\left( n,2\right) ^{-2}f(P)^{-\frac{1}{3}}\left(
1+\left\Vert \frac{a}{\rho ^{\gamma }}\right\Vert _{s}+\left\Vert \frac{b}{%
\rho ^{\alpha }}\right\Vert _{p}\right) \text{ \ as }\epsilon \rightarrow
0^{+}
\end{equation*}%
provided that $R_{g}>0$.

\begin{acknowledgement}
The author would like to thank the referee for valuable comments that have
been implemented in the final version of the paper.
\end{acknowledgement}

\end{document}